\documentclass[twocolumn,10pt]{article}

\usepackage[margin=1in]{geometry}
\usepackage{amsmath,amssymb,amsthm}
\usepackage{mathtools}
\usepackage{bm}
\usepackage{algorithm}
\usepackage{algorithmic}
\usepackage{graphicx}
\usepackage{booktabs}
\usepackage{hyperref}
\usepackage{cite}
\usepackage{setspace}
\usepackage[usenames,dvipsnames]{color}
\usepackage{multirow}
\usepackage{array}
\usepackage{siunitx}
\usepackage{enumitem}
\newtheorem{theorem}{Theorem}[section]
\newtheorem{lemma}[theorem]{Lemma}
\newtheorem{proposition}[theorem]{Proposition}
\newtheorem{corollary}[theorem]{Corollary}
\newtheorem{definition}{Definition}[section]
\newtheorem{assumption}{Assumption}
\newtheorem{remark}{Remark}[section]

\newcommand{\calK}{\mathcal{K}}

\newcommand{\calN}{\mathcal{N}}

\newcommand{\calT}{\mathcal{T}}
\newcommand{\bPhi}{\boldsymbol{\Phi}}
\newcommand{\bpsi}{\boldsymbol{\psi}}
\newcommand{\bPsi}{\boldsymbol{\Psi}}
\newcommand{\bF}{\mathbf{F}}
\newcommand{\bE}{\mathbf{E}}
\newcommand{\bGamma}{\boldsymbol{\Gamma}}
\newcommand{\bH}{\mathbf{H}}
\newcommand{\bA}{\mathbf{A}}
\newcommand{\bT}{\mathbf{T}}
\newcommand{\bK}{\mathbf{K}}
\newcommand{\bL}{\mathbf{L}}
\newcommand{\bP}{\mathbf{P}}
\newcommand{\bQ}{\mathbf{Q}}
\newcommand{\bR}{\mathbf{R}}
\newcommand{\bC}{\mathbf{C}}
\newcommand{\bD}{\mathbf{D}}
\newcommand{\bG}{\mathbf{G}}
\newcommand{\bI}{\mathbf{I}}
\newcommand{\bO}{\mathbf{O}}
\newcommand{\bone}{\mathbf{1}}
\newcommand{\bzero}{\mathbf{0}}
\newcommand{\bW}{\mathbf{W}}
\newcommand{\bOmega}{\boldsymbol{\Omega}}
\newcommand{\bS}{\mathbf{S}}
\newcommand{\bV}{\mathbf{V}}
\newcommand{\bJ}{\mathbf{J}}
\newcommand{\bN}{\mathbf{N}}
\newcommand{\bPi}{\boldsymbol{\Pi}}
\newcommand{\bLambda}{\boldsymbol{\Lambda}}
\newcommand{\bXi}{\boldsymbol{\Xi}}
\newcommand{\bSigma}{\boldsymbol{\Sigma}}
\newcommand{\bX}{\mathbf{X}}
\newcommand{\bY}{\mathbf{Y}}
\newcommand{\bM}{\mathbf{M}}
\newcommand{\normF}[1]{\left\lVert#1\right\rVert_{\mathrm{F}}}
\newcommand{\normone}[1]{\left\lVert#1\right\rVert_{1}}
\newcommand{\normtwo}[1]{\left\lVert#1\right\rVert_{2}}
\newcommand{\normsub}[2]{\left\lVert#1\right\rVert_{#2}}
\newcommand{\norm}[1]{\left\lVert#1\right\rVert}
\newcommand{\inner}[2]{\left\langle #1,\, #2\right\rangle}
\newcommand{\expect}{\mathbb{E}}
\newcommand{\tr}{\mathrm{tr}}

\newcommand{\vecop}{\mathrm{vec}}
\newcommand{\prox}{\mathrm{prox}}

\newcommand{\diag}{\mathrm{diag}}
\DeclareMathOperator*{\argmin}{arg\,min}

\title{\textbf{Koopman Lifting with Certified Error Bounds for Joint Inference in Nonlinear Networks}}
\date{}

\author{Chuansen Peng, Xiaojing Shen
\thanks{The work was supported in part by the National Natural Science
 Foundation of China (NSFC) under Grant U2133208, 62203313. \textit{(Corresponding author: Xiaojing Shen)}\\
 Chuansen Peng, Xiaojing Shen, and Yunmin Zhu are with the College of Mathematics, Sichuan University, Chengdu, Sichuan, 610064, China. (e-mail: \href{mailto:pengchuansen@stu.scu.edu.cn}{pengchuansen@stu.scu.edu.cn};  \href{mailto:shenxj@scu.edu.cn}{shenxj@scu.edu.cn}; \href{mailto:ymzhu@scu.edu.cn}{ymzhu@scu.edu.cn}). }% <-this % stops a space
\;, and Yunmin Zhu}

\begin{document}
\maketitle
\begin{abstract}
Jointly inferring latent node states and unknown network topology in
nonlinear graphical dynamical systems is a fundamental yet
largely unsolved problem. The essential mathematical challenge is the simultaneous online
estimation of continuous latent node states and discrete network
structure in a high-dimensional nonlinear setting, where the two
quantities are mutually entangled and the accurate recovery of either
depends critically on the other. To overcome the nonlinearity and high dimensionality of graphical dynamical systems,
we propose Koopman Group-sparse
Kalman Filter-ADMM (\textbf{Koopman-GKFA}), a novel framework that embeds
nonlinear network dynamics into an approximately linear system via
Koopman operator lifting with a separable node-wise dictionary,
enabling optimal linear filtering for state estimation and provably
convergent convex optimization for topology inference. Three key innovations underpin the framework.
First, a \emph{structural homomorphism lemma} establishes that,
under a separable-dictionary condition, block sparsity of the lifted
coupling operator is isomorphic to the graph topology, providing the
theoretical foundation for group-sparse regularization.
Second, replacing the entrywise $\ell_1$ penalty with a
block-structured group-sparse regularizer enables a strictly convex
ADMM topology subproblem with provable linear convergence, and an integrated
exponential forgetting factor further extends the framework to track
time-varying topologies. Third, we further derive a \emph{three-term mean-squared certified error bound}
decomposing total estimation error into Koopman truncation,
measurement noise, and topology residual components, and establish
monotone consistency as the dictionary dimension grows. Experiments on both synthetic and real-world datasets show that the proposed method achieves consistently superior performance over representative baselines and exhibits strong stability and robustness in nonlinear, high-dimensional graphical dynamical systems.
\end{abstract}

\noindent\textbf{Keywords:} Nonlinear graphical dynamical systems,
Koopman operator, state estimation, topology
inference, Kalman filtering, time-varying graphs.

\section{Introduction}
\label{sec:intro}

%% -----------------------------------------------------------
%% 1. BROAD BACKGROUND
%% -----------------------------------------------------------

Networked dynamical systems pervade modern science and engineering. In power transmission grids, synchronous generators are coupled through transmission-line admittances, and real-time estimation of rotor angles and
angular velocities is indispensable for stability monitoring and protection \cite{zhao2019power}. In gene regulatory networks, the expression level of each gene is driven by
its own transcriptional kinetics and modulated by activators and repressors
whose interaction strengths define the regulatory topology \cite{bansal2007infer}.
In vehicular platoons operating under nonlinear car-following dynamics,
the coupling strength between consecutive vehicles determines whether
upstream disturbances are amplified or attenuated along the chain
\cite{helly1959simulation}.
In multi-robot formations and autonomous agent systems, the coordination
protocol implicitly defines an interaction graph that is rarely pre-specified
and must be inferred from local observations \cite{piperakis2018nonlinear}.
Across all these domains, a networked system is governed by two inseparable
constituents: the \emph{nodal state}, tracking the instantaneous
configuration of each agent, and the \emph{network topology} (with $p$ nodes), encoding the
direction and strength of inter-agent coupling through a weighted adjacency
matrix $\bA \in \mathbb{R}^{p \times p}$.

In practice, $\bA$ is frequently unknown or time-varying: transmission-line
failures may alter the power grid topology without triggering sensor alarms;
regulatory interactions are obscured by transcriptional noise; robot
communication graphs change as agents enter or leave proximity.
Recovering $\bA$ alongside the latent node states from partial,
noise-corrupted streaming observations constitutes the
\emph{joint state estimation and topology inference} problem.
This problem is intrinsically harder than either task in isolation,
because accurate state estimation presupposes knowledge of the coupling
structure, whereas reliable topology inference demands well-estimated state
trajectories, a mutual dependence that calls for a unified algorithmic
framework rather than independent solutions to two separate problems.

%% -----------------------------------------------------------
%% 2. LINEAR GDS: WHAT HAS BEEN DONE
%% -----------------------------------------------------------
A substantial body of literature addresses topology inference under the simplifying assumptions that node states are directly observable and that the underlying graph is static. Graph signal processing (GSP) methods exploit spectral templates, diffusion models, or signal smoothness priors to recover the Laplacian or adjacency matrix~\cite{dong2019learning,segarra2017network,mateos2019connecting}. For instance, Dong et al.~\cite{dong2019learning} formulate graph learning as a signal reconstruction problem under Laplacian constraints, while Segarra et al.~\cite{segarra2017network} infer network structure from spectral templates induced by graph stationarity. Mateos et al.~\cite{mateos2019connecting} provide a unified perspective linking graph topology identification with signal processing on graphs. Recent work has also examined topology inference from causality and limited-excitation viewpoints, with nonasymptotic guarantees developed in~\cite{li2023topology} and probability-based fusion under few excitations studied in~\cite{jiao2025inferring}. Despite these advances, existing methods remain fundamentally static and offline: they rely on batch observations of node states and do not account for sequential uncertainty or latent-state inference. This limitation becomes critical when the node states are only partially or indirectly observed. Parallel to graph topology learning, Kalman filter (KF) theory
\cite{kalman1960new} provides the minimum mean-squared error (MMSE)
estimator for linear Gaussian state-space systems and is widely adopted for
online state estimation in large-scale networked systems
\cite{sayed2014adaptation}.
The KF achieves recursive, computationally efficient updates but operates
under the assumption that the state-transition matrix, which implicitly
encodes the coupling topology, is known.
When $\bA$ is unknown, the KF cannot be applied directly, and the joint
estimation problem requires a fundamentally different treatment.

The joint state estimation and topology inference problem for \emph{linear}
graphical dynamical systems (GDS) was pioneered by Ramezani-Mayiami and
Beferull-Lozano~\cite{ramezani2018joint,ramezani2018topology}, who proposed
alternating between a Kalman update for the latent state and a sub-gradient
step on an $\ell_1$-regularized least-squares objective for the topology.
Elvira and Chouzenoux~\cite{elvira2022graphical} developed GraphEM, a
batch expectation-maximization algorithm for graphical inference in
linear-Gaussian state-space models that achieves high statistical efficiency
but operates offline.
Fang et al.~\cite{fang2025joint} resolved the real-time constraint with
their KF-ADMM algorithm, which interleaves at each time step a Kalman
filter update for state estimation with an ADMM
iteration~\cite{boyd2011distributed} for $\ell_1$-regularized topology
inference; leveraging the linear convergence of ADMM~\cite{hong2017linear}
and the optimality of the KF for linear Gaussian systems, KF-ADMM achieves
superior accuracy over sub-gradient baselines and admits rigorous convergence
guarantees, establishing it as the current state-of-the-art for online joint
estimation in linear GDS.

% The joint state estimation and topology inference problem for \emph{linear} graphical dynamical systems
% (GDS) was pioneered by Ramezani-Mayiami and Beferull-Lozano
% \cite{ramezani2018joint, ramezani2018topology}, who proposed alternating
% between a Kalman update for the latent state and a sub-gradient step on an
% $\ell_1$-regularized least-squares objective for the topology.
% This approach is computationally lightweight but inherits the slow
% convergence and step-size sensitivity characteristic of sub-gradient methods.
% Elvira and Chouzenoux \cite{elvira2022graphical} developed GraphEM, a
% batch expectation-maximization algorithm for graphical inference in
% linear-Gaussian state-space models; GraphEM achieves high statistical
% efficiency but is inherently \emph{offline}, making it unsuitable for
% real-time applications with streaming observations and dynamically evolving
% network conditions.
% Fang et al.~\cite{fang2025joint} resolved the real-time constraint with
% their KF-ADMM algorithm, which interleaves at each time step a Kalman filter
% update for state estimation with an ADMM iteration \cite{boyd2011distributed}
% for $\ell_1$-regularized topology inference.
% Leveraging the linear convergence of ADMM \cite{hong2017linear} and the
% optimality of the KF for linear Gaussian systems, KF-ADMM achieves superior
% accuracy over sub-gradient baselines \cite{ramezani2018joint} and admits
% rigorous convergence guarantees, establishing it as the current
% state-of-the-art for \emph{online} joint estimation in linear GDS.

The existing progress and contributions have mainly focused on linear graph dynamical systems, whereas in practice we still face important nonlinear graph dynamical problems.
Power system swing equations are nonlinear; gene regulatory kinetics follow
Hill functions; car-following dynamics exhibit saturation nonlinearities;
and coupled van der Pol oscillators are fundamentally nonlinear.
Extending provably convergent joint estimation to general nonlinear GDS is
therefore both scientifically necessary and practically indispensable.

%% -----------------------------------------------------------
%% 3. NONLINEAR GDS: APPROACHES AND FUNDAMENTAL LIMITATIONS
%% -----------------------------------------------------------
% However, their computational cost grows exponentially with the state
% dimension, rendering them impractical for networked systems with many nodes.
% Rao-Blackwellized variants partially address this by marginalizing linear
% sub-structures, but the topology matrix $\bA$, entering the state equation
% as an unknown nonlinear coefficient, does not admit straightforward
% Rao-Blackwellization in the nonlinear GDS setting, and no particle-based
% joint-estimation framework with provable guarantees currently exists.
Existing approaches to nonlinear state estimation do not close this gap.
The Extended Kalman Filter (EKF)\cite{schmidt1966application} and Unscented Kalman Filter
(UKF)~\cite{julier2004unscented} are the two most widely deployed
strategies: the EKF linearizes the nonlinear dynamics at each step via a
first-order Taylor expansion, while the UKF propagates a deterministically
chosen set of sigma points through the full nonlinear map, achieving
second-order accuracy without explicit Jacobian computation. Particle filters(PFs) \cite{arulampalam2002tutorial} offer asymptotically exact
Bayesian inference under arbitrary nonlinear, non-Gaussian dynamics. EKF and UKF have the advantage of computational simplicity, making them suitable for weakly nonlinear dynamical systems or systems with symmetrically distributed noise. PFs are suitable for nonlinear and non-Gaussian settings, and are mainly applicable to low- and moderate-dimensional dynamical systems. For high-dimensional dynamical systems, however, they face particle degeneracy and significant computational challenges. Sagi et al.~\cite{sagi2023extended} adapted the EKF to graph signals
(GEKF) for nonlinear GDS with a \emph{known} topology, while
Li et al.~\cite{li2023unscented} developed the analogous graph UKF (GUKF)
under the same known-topology assumption.
Although GEKF and GUKF successfully handle nonlinear node dynamics, they
face fundamental obstacles when the topology must be inferred: the Jacobian
or sigma-point ensemble must be recomputed for each candidate topology at
every time step, creating a computationally prohibitive coupling between
nonlinearity and the unknown~$\bA$; and the resulting entanglement of $\bA$
with the node state destroys the strict convexity of the topology
subproblem, precluding provably convergent ADMM iterations and making it
impossible to bound the estimation error in a form that separates truncation
from noise or from topology residuals. State-space topology identification via augmented state
vectors~\cite{coutino2020state} preserves a state-space form but similarly
forfeits convergence guarantees on the topology block. 
% Although GEKF and GUKF successfully handle nonlinear node dynamics, they
% share three critical limitations when extended to the topology-inference
% context.
% \emph{First}, the Jacobian (EKF) or sigma-point ensemble (UKF) must be
% recomputed at each time step for each candidate topology, creating a
% computationally prohibitive coupling between the nonlinearity and the
% unknown $\bA$ that prevents scalable online inference.
% \emph{Second}, EKF linearization error accumulates globally and cannot be
% bounded in a form that separates truncation from noise or from topology
% estimation residuals, rendering convergence guarantees unavailable.
% \emph{Third}, the quadratic coupling of the unknown topology $\bA$ and the
% node state $x$ that arises from EKF/UKF linearization destroys the strict
% convexity of the topology subproblem, precluding the application of
% provably convergent ADMM iterations.
% In short, neither EKF nor UKF provides the algebraic structure required
% to formulate and solve the topology inference subproblem within an
% online, provably convergent joint-estimation framework.

Graph neural networks (GNNs) \cite{hamilton2017inductive} and their
recurrent extensions have demonstrated empirical success in learning
nonlinear graph dynamics.
Buchnik et al.~\cite{buchnik2024gspkalmannet} proposed GSP-KalmanNet,
which integrates GSP priors with a neural-augmented Kalman gain learned
from data, achieving strong performance on nonlinear filtering tasks.
Although neural approaches have shown empirically impressive performance in joint-estimation contexts, they operate under specific underlying mechanisms:
(i) they require large labeled training datasets, which may be unavailable
in scientific or engineering settings with few system trajectories;
(ii) they lack interpretable analytical guarantees on estimation error,
convergence rate, or sample complexity, which are essential for
safety-critical deployment; and
(iii) they universally assume that the graph topology $\bA$ is known during
both training and inference, topology inference is entirely absent from
existing neural frameworks for graph dynamical systems.

%% -----------------------------------------------------------
%% 4. KOOPMAN OPERATOR THEORY: FROM SINGLE SYSTEMS TO NETWORKS
%% -----------------------------------------------------------
Koopman operator theory~\cite{koopman1931hamiltonian} provides a principled framework for representing nonlinear dynamics through linear evolution in a lifted observable space. Its spectral interpretation, established by Mezi\'{c}~\cite{mezic2005spectral}, shows that Koopman eigenvalues and eigenfunctions encode the global structure of the underlying flow and support finite-dimensional approximation. This viewpoint has led to data-driven identification methods such as Dynamic Mode Decomposition (DMD)~\cite{schmid2010dynamic} and Extended DMD (EDMD)~\cite{williams2015data}, with convergence guarantees for EDMD established in~\cite{korda2018convergence}. Subsequent work extended the Koopman framework to identification, control, and bilinearization of nonlinear systems~\cite{brunton2016koopman,proctor2018generalizing,mauroy2019koopman,goswami2021bilinearization,korda2020optimal}, while deep learning-based parameterizations further enlarged the class of admissible observables~\cite{lusch2018deep}.

The foregoing review reveals a fundamental gap in the existing literature,
which we summarize concisely. \emph{Linear joint estimation} methods
(KF-ADMM \cite{fang2025joint}, GraphEM \cite{elvira2022graphical},
sub-gradient KF \cite{ramezani2018joint, ramezani2018topology}) achieve
provably convergent joint state-and-topology inference, but only under affine node dynamics and affine coupling functions.
\emph{Nonlinear graph filters} (GEKF \cite{sagi2023extended},
GUKF \cite{li2023unscented}, GSP-KalmanNet \cite{buchnik2024gspkalmannet})
handle nonlinear node dynamics when the topology is known.
\emph{Single-system Koopman methods}
\cite{williams2015data, korda2018convergence, lusch2018deep,
brunton2016koopman, proctor2018generalizing}
establish the theoretical and algorithmic foundation for data-driven
linearization of nonlinear dynamics but do not treat networked systems
with coupling structure, let alone unknown coupling.
The analysis reveals that no existing framework simultaneously
satisfies all three of the following requirements:
\begin{enumerate}
\item[\textbf{(R1)}] \textbf{Nonlinear GDS.} The self-dynamics
  and the coupling function are allowed to be general
  smooth nonlinear maps, without restriction to affine models.
\item[\textbf{(R2)}] \textbf{Unknown topology.} The topology matrix $\bA$
  is unknown and must be inferred online from streaming observations,
  rather than assumed to be given.
\item[\textbf{(R3)}] \textbf{Provable guarantees.} The algorithm comes
  with rigorous convergence guarantees for the topology subproblem,
  a structured error bound decomposing all error sources, and
  consistency as the approximation richness grows.
\end{enumerate}

%% -----------------------------------------------------------
%% 6. THIS PAPER: KOOPMAN-GKFA
%% -----------------------------------------------------------

% \subsection*{Contributions of This Paper}
We address this gap by developing the \textbf{Koopman Group-sparse Kalman Filter-ADMM (Koopman-GKFA)} algorithm, a unified framework for online joint state estimation and topology inference in nonlinear GDS that satisfies (R1)--(R3). The key idea is to employ a separable Koopman lifting, which transforms each node independently and yields an approximately linear state-space model in the lifted domain. Under this design, the coupling structure in the lifted system preserves the original graph topology, thereby establishing a principled link between group sparsity in the lifted space and edge sparsity in the original network. This connection justifies replacing the entrywise sparsity penalty in KF-ADMM with a block group-norm regularizer, and enables a Kalman-filter-based state estimator together with a group-sparse ADMM-based topology learner. Overall, this paper makes four contributions.

\begin{itemize}
\item \textbf{Decoupled Koopman lifting with bounded truncation error.}
Under a separable-dictionary assumption, we show that the nonlinear
network dynamics become approximately linear in the lifted space, with
an effective process noise bounded by two additive terms: a Koopman
truncation term that vanishes monotonically as the dictionary is
enriched~\cite{korda2018convergence}, and a lifted noise term
proportional to the original process noise.
Crucially, this decoupled representation preserves the scalar topology
weights in the lifted dynamics, enabling the direct application of
the Kalman filter and ADMM without entangling the nonlinearity and
the unknown topology.
\item \textbf{Structural Homomorphism Lemma, group-sparse ADMM, and
forgetting-factor extension.}
We prove that block sparsity of the lifted coupling operator is
isomorphic to the graph edge structure under the separable-dictionary
condition, providing the rigorous foundation for replacing the
entrywise $\ell_1$ regularizer of~\cite{fang2025joint} with a
block group-norm penalty~\cite{yuan2006model}.
The resulting ADMM topology subproblem is strictly convex and admits
globally linear convergence~\cite{hong2017linear}, achieving provably
stronger sparsity induction for edge detection.
An integrated forgetting factor further extends the framework to
time-varying topologies \cite{baingana2016tracking} via a contraction argument on the
time-weighted Hessian, addressing a gap shared by all prior
joint-estimation methods.
\item \textbf{Three-term MSE decomposition and asymptotic consistency.}
We establish a closed-form mean-squared error bound that explicitly
separates the total estimation error into a Koopman truncation term,
a statistical noise floor, and a topology residual term, each
governed by an independent and reducible physical mechanism.
This is the first such decomposition for nonlinear graphical dynamical
systems.
As the dictionary is enriched and observations accumulate, the state
and topology estimates converge monotonically to their true values.
\item \textbf{Comprehensive empirical validation.}
On synthetic Kuramoto oscillator networks with controllable nonlinearity
and graph structure, Koopman-GKFA achieves state estimation accuracy
closest to the Posterior Cram\'{e}r--Rao Lower Bound (PCRLB) and attains the
highest topology recovery scores among all evaluated methods. Experiments on two real-world benchmarks, the NGSIM US-101 highway
traffic dataset and the DREAM4 in silico gene
regulatory network, further confirm these
findings under practical nonlinear dynamics, demonstrating consistent
superiority in both state estimation and topology inference.
\end{itemize}

The remainder of this paper is organized as follows. Section~\ref{sec:prelim} reviews graph theory, Koopman operator
theory, and extended dynamic mode decomposition.
Section~\ref{sec:problem} formalizes the nonlinear GDS model and
states the joint estimation problem.
Section~\ref{sec:lifting} derives the decoupled Koopman lifting and
proves the Structural Homomorphism Lemma.
Section~\ref{sec:algorithm} presents the complete Koopman-GKFA
algorithm, including the Kalman filter state update, group-sparse
ADMM topology solver, and offline EDMD pre-training.
Section~\ref{sec:theory} establishes linear ADMM convergence,
spectral preservation, the three-term MSE decomposition, and
asymptotic consistency guarantees.
Section~\ref{sec:simulation} validates the framework on synthetic
and real-world networked dynamical systems.
Section~\ref{sec:conclusion} concludes the paper.
All proofs are deferred to the Appendix.

% \paragraph{Paper organization.}
% Section~\ref{sec:prelim} reviews graph-theoretic and Koopman operator
% preliminaries.
% Section~\ref{sec:problem} formulates the nonlinear GDS model and the joint
% estimation problem.
% Section~\ref{sec:lifting} derives the decoupled Koopman lifting and proves
% the Structural Homomorphism Lemma.
% Section~\ref{sec:algorithm} presents the Koopman-GKFA algorithm in full.
% Section~\ref{sec:theory} establishes all theoretical guarantees.
% Section~\ref{sec:experiments} provides numerical experiments on nonlinear
% graphical dynamical networks, and Section~\ref{sec:conclusion} concludes
% with directions for future work.

\textbf{Notation}. $\bI_p$ denotes the $p\times p$ identity matrix; $\bone_p\in
\mathbb{R}^p$ is the all-ones vector. $\bX\otimes\bY$ denotes the
Kronecker product. $\vecop(\bX)$ stacks columns of $\bX$.
$\normF{\bX}$, $\normone{\bX}=\sum_{ij}|x_{ij}|$, and
$\norm{\bX}_2$ denote Frobenius, entrywise-$\ell_1$, and spectral
norms. $\norm{x}_\bP = x^\top\bP^{-1}x$.
$\diag([\bM_1,\ldots,\bM_p])$ is block-diagonal.
$\mathrm{col}\{x_{i,k}\}=[x_{1,k}^\top,\ldots,x_{p,k}^\top]^\top$.
$\bX\succeq\bO$ ($\bX\succ\bO$) denotes positive semi-definiteness
(definiteness). For a matrix $\bX=[x_{ij}]$, $[\bX]_{ij}$ denotes the
$(i,j)$-th entry.

%% ===============================================================
\section{Preliminaries}
\label{sec:prelim}
%% ===============================================================

\subsection{Graph Theory}
Let $(\mathcal{V},\mathcal{E},\bW)$ be a weighted directed graph
with $p$ nodes, where $\mathcal{V}=\{1,\ldots,p\}$ and
$\mathcal{E}\subseteq\mathcal{V}\times\mathcal{V}$.
The weighted adjacency matrix $\bW=[\omega_{ij}]\in\mathbb{R}^{p\times p}$
has $\omega_{ij}>0$ if $(i,j)\in\mathcal{E}$ (node $j$ influences node
$i$) and $\omega_{ij}=0$ otherwise. The topology matrix $\bA\in
\mathbb{R}^{p\times p}$ denotes either $\bW$ or a Laplacian-normalized
variant.

\subsection{Koopman Operator Theory}
\label{subsec:koopman_prelim}

\begin{definition}[Koopman Operator]
\label{def:koopman}
Let $\mathcal{F}$ be a Hilbert space of observables
$\varphi:\mathcal{X}\to\mathbb{R}$. For the dynamical system
$x_{k+1}=f(x_k)$, the Koopman operator $\calK:\mathcal{F}\to
\mathcal{F}$ is the composition operator:
\begin{equation}
  (\calK\varphi)(x) \;\triangleq\; \varphi(f(x)),
  \quad\forall\,\varphi\in\mathcal{F},\; x\in\mathcal{X}.
\end{equation}
\end{definition}

$\calK$ is linear regardless of the nonlinearity of $f$. A
finite-dimensional dictionary $\bPhi(x)=[\varphi_1(x),\ldots,
\varphi_N(x)]^\top$ yields the finite-dimensional Koopman matrix
$\bF^\phi\in\mathbb{R}^{N\times N}$ via:
\begin{equation}
  \bPhi(f(x)) = \bF^\phi\bPhi(x) + \boldsymbol{\varepsilon}(x),
  \label{eq:koopman_finite}
\end{equation}
where $\boldsymbol{\varepsilon}(x)$ is the truncation residual,
which vanishes identically when the subspace
$\mathrm{span}\{\varphi_1,\ldots,\varphi_N\}$ is Koopman-invariant.

\begin{definition}[Separable Dictionary]
\label{def:separable}
A dictionary $\bPhi:\mathbb{R}^{np}\to\mathbb{R}^{Np}$ for an
$p$-node network is \emph{separable} if it decomposes as:
\begin{equation}
  \bPhi(x) = \bigl[\bpsi_1(x_1)^\top,\,\bpsi_2(x_2)^\top,\,\ldots,
  \bpsi_p(x_p)^\top\bigr]^\top,
  \label{eq:separable}
\end{equation}
where $\bpsi_i:\mathbb{R}^n\to\mathbb{R}^N$ is a per-node local
dictionary acting only on $x_i\in\mathbb{R}^n$.
\end{definition}

\subsection{Extended Dynamic Mode Decomposition}
\label{subsec:edmd}

Given $n$ trajectory samples $\{(x_k,x_{k+1})\}_{k=1}^n$,
EDMD~\cite{williams2015data} computes the finite-dimensional Koopman
matrix by:
\begin{equation}
  \hat{\bF}^\phi = \argmin_{\bF}\;\sum_{k=1}^{n}
  \|{\bPhi(x_{k+1}) - \bF\,\bPhi(x_k)}\|_2^2
  = \bPsi_+\bPsi_-^\dagger,
  \label{eq:edmd}
\end{equation}
where $\bPsi_\pm = [\bPhi(x_{1\pm 1}),\ldots,\bPhi(x_{n\pm 1})]$
and $(\cdot)^\dagger$ denotes the Moore--Penrose pseudoinverse. Under
ergodicity and dictionary completeness,
$\hat{\bF}^\phi\to\bF^{\phi}$ as $n\to\infty$~\cite{korda2018convergence}.

%% ===============================================================
\section{Problem Formulation}
\label{sec:problem}
%% ===============================================================

\subsection{Nonlinear Graphical Dynamical System}
\label{subsec:model}

Consider a directed graph $(\mathcal{V},\mathcal{E},\bW)$ with $p$
nodes. The state $x_{i,k}\in\mathbb{R}^n$ of node $i$ and the
measurement $z_{i,k}\in\mathbb{R}^q$ satisfy:
\begin{align}
  x_{i,k+1} &= f_i(x_{i,k})
               + \sum_{j=1}^p a_{ij}\,g(x_{j,k})
               + w_{i,k},
               \label{eq:state}\\
  z_{i,k}   &= h_i(x_{i,k}) + v_{i,k},
               \label{eq:meas}
\end{align}
where $f_i:\mathbb{R}^n\to\mathbb{R}^n$ is the node-$i$ self-transition
map, $g:\mathbb{R}^n\to\mathbb{R}^n$ is the shared coupling function,
$h_i:\mathbb{R}^n\to\mathbb{R}^q$ is the measurement map,
$a_{ij}\in\mathbb{R}$ is the unknown topology weight, and
$w_{i,k}\sim\mathcal{N}(\bzero,\bQ_{i,k})$,
$v_{i,k}\sim\mathcal{N}(\bzero,\bR_{i,k})$ are mutually independent
zero-mean white Gaussian noises. Denote states and measurement of all nodes at $k$-th timeslot as $x_k\triangleq\mathrm{col}\{x_{i,k}\}=[x_{1,k}^\top,\ldots,x_{p,k}^\top]^\top$ and $z_k\triangleq\mathrm{col}\{z_{i,k}\}=[z_{1,k}^\top,\ldots,z_{p,k}^\top]^\top$.

\begin{remark}
System \eqref{eq:state}--\eqref{eq:meas} generalizes the linear
graphical model of~\cite{fang2025joint} (recovered by setting
$f_i(x)=\bF_i x$, $g(x)=\bGamma x$, $h_i(x)=\bH_i x$) and
captures a broad class of nonlinear networks including coupled van
der Pol oscillators, nonlinear car-following models, and Hill-kinetics
gene regulatory networks~\cite{bansal2007infer}.
\end{remark}

\subsection{Problem Statement}
\label{subsec:problem_statement}

\textbf{Offline (pre-training):} Given sets of node-level trajectory
data, identify the Koopman matrices $\bF_i^\phi$ (self-dynamics),
$\bGamma^\phi$ (coupling), and $\bH_i^\phi$ (measurement) via EDMD.

\textbf{Online (main problem):} Given the pre-trained Koopman
matrices and streaming observations $\{z_k\}_{k\geq 1}$, jointly
estimate the node state $x_k$ and topology matrix $\bA$ in an online,
computationally efficient manner.

%% ===============================================================
\section{Koopman Lifting of Nonlinear Graphical Dynamics}
\label{sec:lifting}
%% ===============================================================

\subsection{Decoupled Koopman Representation}
\label{subsec:decoupled}

We lift each node's state separately using the local dictionary
$\bpsi_i:\mathbb{R}^n\to\mathbb{R}^N$ to form the lifted state:
\begin{equation}
  \bpsi_{i,k} \;\triangleq\; \bpsi_i(x_{i,k}) \;\in\;\mathbb{R}^N.
  \label{eq:lifted_def}
\end{equation}
We impose the following representability conditions.

\begin{assumption}[Dictionary Conditions]
\label{ass:dictionary}
The local dictionary $\bpsi_i(\cdot)$ satisfies:
\begin{enumerate}
  \item[(A1)] \textit{Self-dynamics:} $\exists\,\bF_i^\phi\in
  \mathbb{R}^{N\times N}$ such that
  $\left\lVert\bpsi_i(f_i(x))-\bF_i^\phi\,\bpsi_i(x)\right\rVert_2\leq\varepsilon_f$
  for all $x\in\mathcal{X}$.
  \item[(A2)] \textit{Coupling:} $\exists\,\bGamma^\phi\in
  \mathbb{R}^{N\times N}$ such that
  $\left\lVert\bpsi_i(g(x))-\bGamma^\phi\,\bpsi_i(x)\right\rVert_2\leq\varepsilon_g$
  for all $x\in\mathcal{X}$ and all $i$.
  \item[(A3)] \textit{Measurement:} $\exists\,\bH_i^\phi\in
  \mathbb{R}^{q\times N}$ such that $h_i(x)=\bH_i^\phi\,\bpsi_i(x)$
  exactly.
  \item[(A4)] \textit{Identity embedding:} $\bpsi_i(x) =
  [x^\top,\psi_{n+1}(x)^\top,\ldots]^\top$ so that the projection
  $\bPi=[\bI_n\;\;\mathbf{0}_{n\times(N-n)}]\in\mathbb{R}^{n\times N}$
  satisfies $x = \bPi\,\bpsi_i(x)$ for all $x\in\mathcal{X}$.
 %% --- 新增以下内容到 Assumption 1 的 (A4) 后面 ---
    \item[(A5)] \textit{Network-level coupling:} To account for the cross-coupling of the nonlinear dictionaries, there exists a uniform network approximation bound $\varepsilon_{\text{net}}$ such that the decoupled lifted dynamics satisfy $||\bpsi_i(f_i(x_i) + \sum_{j} a_{ij}g(x_j)) - \bF_i^\phi\bpsi_i(x_i) - \sum_{j} a_{ij}\bGamma^\phi\bpsi_j(x_j)|| \leq \varepsilon_{\text{net}}$ for all $i$ and valid states.
\end{enumerate}
\end{assumption}

\begin{remark}
Assumption~\ref{ass:dictionary}(A3) is satisfied by polynomial
$h_i$ when the dictionary contains the corresponding monomial basis.
For linear $h_i(x)=\bH_{i,0}x$, (A4) gives
$\bH_i^\phi=\bH_{i,0}\bPi$ exactly. Conditions (A1)--(A2) hold
with $\varepsilon_f=\varepsilon_g=0$ when the function spaces are
Koopman-invariant, and can be made arbitrarily small by increasing
$N$ for dense dictionaries~\cite{korda2018convergence}.
\end{remark}
\begin{remark}[Relationship between Local and Network-level Bounds]
Assumptions (A1) and (A2) guarantee the existence of the local Koopman operators $\bF_i^\phi$ and $\bGamma^\phi$, establishing the theoretical foundation for the offline decoupled EDMD pre-training (Section~\ref{subsec:pretrain}). Meanwhile, Assumption (A5) explicitly bounds the collective truncation residual that arises when substituting these separated local operators into the globally coupled nonlinear network dynamics. As the dictionary enriches ($N \to \infty$), the uniform density property ensures that $\varepsilon_f, \varepsilon_g \to 0$, which consequently drives the network-level residual $\varepsilon_{\text{net}} \to 0$ (as analyzed in Corollary~\ref{cor:consistency}).
\end{remark}

\subsection{Approximate Linear Evolution in Lifted Space}
\label{subsec:approx_linear}

\begin{theorem}[Decoupled Koopman Lifting]
\label{thm:lifting}
Let Assumption~\ref{ass:dictionary} hold and suppose $\bpsi_i$ is
Lipschitz-continuous with constant $L_\Phi$ and has bounded gradient
$\sup_{x}\norm{\nabla\bpsi_i(x)}_2\leq L_\nabla$. Then the lifted
state~\eqref{eq:lifted_def} satisfies:
\begin{equation}
  \bpsi_{i,k+1} = \bF_i^\phi\,\bpsi_{i,k}
    + \sum_{j=1}^p a_{ij}\,\bGamma^\phi\,\bpsi_{j,k}
    + \bar{w}_{i,k},
  \label{eq:lifted_dynamics}
\end{equation}
where the effective lifted process noise $\bar{w}_{i,k}$ satisfies:
% \begin{equation}
%   \expect\bigl[\|{\bar{w}_{i,k}}\|^2\bigr]
%   \;\leq\;
%   \underbrace{2\bigl(\varepsilon_{\text{net}}^2\bigr)}_{\triangleq\,\sigma_{\calK,i}^2\text{ (Koopman residual)}}
%   \;+\;
%   \underbrace{2\,L_\nabla^2\,\tr(\bQ_{i,k})}_{\triangleq\,\sigma_{\calN,i,k}^2\text{ (lifted noise)}},
%   \label{eq:lifted_noise_bound}
% \end{equation}
\begin{equation}
  \expect\bigl[\|{\bar{w}_{i,k}}\|^2\bigr]
  \leq\underbrace{2\varepsilon_{\text{net}}^2}_{\triangleq\sigma_{\calK,i}^2\text{ (Koopman residual)}}
  +\underbrace{2L_\nabla^2\tr(\bQ_{i,k})}_{\triangleq\sigma_{\calN,i,k}^2\text{ (lifted noise)}}.
  \label{eq:lifted_noise_bound}
\end{equation}
%where $\|\bA\|_\infty = \max_i\sum_j|a_{ij}|$.
\end{theorem}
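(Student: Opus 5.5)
We define $\bar{w}_{i,k}$ by \eqref{eq:lifted_dynamics} itself, so the identity holds automatically. All the content lies in the second-moment bound \eqref{eq:lifted_noise_bound}. Write $m_{i,k}\triangleq f_i(x_{i,k})+\sum_j a_{ij}g(x_{j,k})$ for the noise-free successor, so that $x_{i,k+1}=m_{i,k}+w_{i,k}$. We split
\begin{equation*}
  \bar{w}_{i,k}
  = \underbrace{\bigl[\bpsi_i(m_{i,k})-\bF_i^\phi\bpsi_{i,k}-\textstyle\sum_j a_{ij}\bGamma^\phi\bpsi_{j,k}\bigr]}_{\triangleq\, r_{i,k}}
  + \underbrace{\bigl[\bpsi_i(m_{i,k}+w_{i,k})-\bpsi_i(m_{i,k})\bigr]}_{\triangleq\, e_{i,k}}.
\end{equation*}
The first bracket is a deterministic residual given the current states. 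The second is the perturbation of the dictionary caused by the process noise.

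By Assumption~\ref{ass:dictionary}(A5), $\|r_{i,k}\|_2\le\varepsilon_{\text{net}}$ pointwise, for every admissible state. Assumptions (A1)--(A2) are not needed directly here; they only motivate (A5). The elementary inequality $\|a+b\|^2\le 2\|a\|^2+2\|b\|^2$ then gives
\[
\expect\|\bar w_{i,k}\|^2\le 2\varepsilon_{\text{net}}^2+2\,\expect\|e_{i,k}\|^2 .
\]
This explains the factor $2$ on both terms and avoids any cross-term analysis. We do not need independence between $r_{i,k}$ and $e_{i,k}$, although it does hold: $r_{i,k}$ depends only on $x_k$, and $w_{i,k}$ is independent of $x_k$.

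For the noise term, we apply the mean value theorem in integral form along the segment from $m_{i,k}$ to $m_{i,k}+w_{i,k}$:
\[
e_{i,k}=\int_0^1\nabla\bpsi_i(m_{i,k}+t w_{i,k})\,dt\;w_{i,k}.
\]
Hence $\|e_{i,k}\|_2\le L_\nabla\|w_{i,k}\|_2$, using $\sup_x\|\nabla\bpsi_i(x)\|_2\le L_\nabla$. The Lipschitz constant $L_\Phi$ would also do, and in fact $L_\Phi\le L_\nabla$ on convex domains. Squaring and taking expectations, together with $\expect\|w_{i,k}\|^2=\tr(\bQ_{i,k})$ for $w_{i,k}\sim\mathcal N(\bzero,\bQ_{i,k})$, gives $\expect\|e_{i,k}\|^2\le L_\nabla^2\tr(\bQ_{i,k})$. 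This is the lifted-noise term $\sigma^2_{\calN,i,k}$.

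The main obstacle is a domain issue. (A5) and the gradient bound are stated over $\mathcal{X}$ and over valid states, but the noisy point $m_{i,k}+tw_{i,k}$ is Gaussian-perturbed and can leave any bounded $\mathcal{X}$. I would handle this by reading the gradient bound as global, since the theorem writes $\sup_x$ without restriction. I would also interpret (A5) as a condition on the noise-free successor $m_{i,k}$ only, which is exactly where the split above applies it. With these two readings, the segment argument is valid on all of $\mathbb{R}^n$, and the integrability of $\|\bar w_{i,k}\|^2$ follows from the linear growth of $\bpsi_i$. If instead one insists on a compact $\mathcal X$, one would need either a truncation argument or the observation that the gradient bound extends to the convex hull. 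I would flag this in a remark rather than complicate the proof.
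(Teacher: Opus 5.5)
Your proposal is correct and follows the paper's proof essentially step for step. You use the same split of $\bar{w}_{i,k}$ into the (A5)-controlled residual at the noise-free successor plus the noise-induced dictionary perturbation, the same integral mean-value bound $\|e_{i,k}\|_2\le L_\nabla\|w_{i,k}\|_2$ (giving $L_\nabla^2\,\mathrm{tr}(\mathbf{Q}_{i,k})$ in expectation), and the same combination via $\|a+b\|^2\le 2\|a\|^2+2\|b\|^2$. The domain caveat you raise, that the Gaussian-perturbed segment can leave $\mathcal{X}$, is not addressed in the paper. The paper implicitly reads the gradient bound globally and applies (A5) only at the noise-free point, exactly as you propose.
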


\begin{proof}
    Please see Appendix \ref{app_thm1} for further details.
\end{proof}

\begin{remark}
Theorem~\ref{thm:lifting} provides the pivotal mathematical bridge between the native nonlinear physical dynamics and the linear filtering methodology. By explicitly decoupling the deterministic Koopman truncation residual ($\sigma_{\calK,i}^2$) from the stochastically propagated measurement noise ($\sigma_{\calN,i,k}^2$), it ensures that the decoupled lifted system strictly adheres to the structure of a linear-Gaussian state-space model, albeit with a rigorously bounded uncertainty envelope. This analytic separation is crucial, as it fundamentally guarantees the bounded-error stability of the subsequent Kalman filter operations and establishes the strict convexity requisite for the group-sparse ADMM topology inference, ultimately circumventing the divergent behaviors typically associated with local linearizations such as the Extended Kalman Filter (EKF).
\end{remark}

\subsection{Structural Homomorphism Lemma}
\label{subsec:homomorphism}

Under the separable dictionary (Definition~\ref{def:separable}), the
global lifted state $\bPsi_k=\mathrm{col}\{\bpsi_{i,k}\}\in
\mathbb{R}^{Np}$ and the global coupling
$\bT^\phi=\bA\otimes\bGamma^\phi\in\mathbb{R}^{Np\times Np}$ lead to
the following key structural result.

% \begin{lemma}[Structural Homomorphism]
% \label{lem:homomorphism}
% Let $\bPhi$ be a separable dictionary as in
% Definition~\ref{def:separable}, and suppose each local dictionary
% $\bpsi_i$ satisfies Assumption~\ref{ass:dictionary}(A1)--(A2) with
% $\varepsilon_f=\varepsilon_g=0$ (exact Koopman invariance). Then,
% for the coupling matrix $\bT^\phi=\bA\otimes\bGamma^\phi$ with
% $\bGamma^\phi$ invertible, the following equivalence holds:
% \begin{equation}
%   a_{ij} = 0 \;\iff\; [\bT^\phi]_{(i\text{-block},j\text{-block})}
%   = \bO_{N\times N},
%   \label{eq:homomorphism}
% \end{equation}
% where $[\bT^\phi]_{(i,j)}=a_{ij}\,\bGamma^\phi$ is the
% $(i,j)$-th $N\times N$ block of $\bT^\phi$.
% Moreover, the block Frobenius norm satisfies:
% \begin{equation}
%   \normF{[\bT^\phi]_{(i,j)}} = |a_{ij}|\cdot\normF{\bGamma^\phi},
%   \label{eq:block_norm}
% \end{equation}
% so that minimizing $\sum_{i,j}\normF{[\bT^\phi]_{(i,j)}}$ is
% equivalent to minimizing $\normF{\bGamma^\phi}\sum_{i,j}|a_{ij}| =
% \normF{\bGamma^\phi}\normone{\bA}$.
% \end{lemma}
\begin{lemma}[Structural Homomorphism]
\label{lem:homomorphism}
Let $\bPhi$ be a separable dictionary as in
Definition~\ref{def:separable}, and let $\bGamma^\phi$ be invertible.
For any coupling matrix parameterized as
$\bT^\phi = \bA \otimes \bGamma^\phi \in \mathbb{R}^{Np \times Np}$,
the following equivalence holds exactly:
\begin{equation}
  a_{ij} = 0 \;\iff\; [\bT^\phi]_{(i,j)} = \bO_{N\times N},
  \label{eq:homomorphism}
\end{equation}
where $[\bT^\phi]_{(i,j)} = a_{ij}\bGamma^\phi$.
Moreover,
\begin{equation}
  \normF{[\bT^\phi]_{(i,j)}} = |a_{ij}|\cdot\normF{\bGamma^\phi},
  \label{eq:block_norm}
\end{equation}
so that $\sum_{i,j}\normF{[\bT^\phi]_{(i,j)}} =
\normF{\bGamma^\phi}\normone{\bA}$.
When Assumption~\ref{ass:dictionary}(A1)--(A2) hold with
$\varepsilon_f=\varepsilon_g=0$, this parameterization is exact for
the true system Koopman operator; for finite $N$, the approximation
error $\varepsilon_{\mathrm{net}}$ is bounded by
Theorem~\ref{thm:error_bound} and vanishes as $N\to\infty$.
\end{lemma}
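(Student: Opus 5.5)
The argument is essentially algebraic and rests on the block structure of the Kronecker product. I would first record that, by definition of $\bA\otimes\bGamma^\phi$, the $(i,j)$-th $N\times N$ block (rows $(i-1)N+1,\ldots,iN$ and columns $(j-1)N+1,\ldots,jN$) equals $a_{ij}\bGamma^\phi$. This identification is also consistent with the lifted dynamics \eqref{eq:lifted_dynamics} of Theorem~\ref{thm:lifting}. Stacking $\bpsi_{i,k+1}$ over $i$ under the separable dictionary of Definition~\ref{def:separable} gives
\begin{equation*}
\bPsi_{k+1}=\diag([\bF_1^\phi,\ldots,\bF_p^\phi])\,\bPsi_k+(\bA\otimes\bGamma^\phi)\bPsi_k+\mathrm{col}\{\bar{w}_{i,k}\}.
\end{equation*}
Here separability is exactly what guarantees that the $j$-th block of $\bPsi_k$ depends only on $x_{j,k}$. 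Consequently, the block $(i,j)$ of the coupling operator encodes the influence of node $j$ on node $i$ and nothing else.

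For the equivalence \eqref{eq:homomorphism}, the forward direction is immediate: if $a_{ij}=0$, the block is $0\cdot\bGamma^\phi=\bO$. For the converse, suppose $a_{ij}\bGamma^\phi=\bO$. Since $\bGamma^\phi$ is invertible, it is nonzero. One can either right-multiply by $(\bGamma^\phi)^{-1}$ to get $a_{ij}\bI_N=\bO$, or pick any nonzero entry $[\bGamma^\phi]_{rs}$ and note that $a_{ij}[\bGamma^\phi]_{rs}=0$. In both cases $a_{ij}=0$. I would remark that invertibility is stronger than needed, since $\bGamma^\phi\neq\bO$ suffices.

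For the norm identity \eqref{eq:block_norm}, I would use absolute homogeneity of the Frobenius norm: $\normF{a_{ij}\bGamma^\phi}=|a_{ij}|\normF{\bGamma^\phi}$. Summing over $i,j$ and pulling out the common factor yields $\normF{\bGamma^\phi}\sum_{i,j}|a_{ij}|=\normF{\bGamma^\phi}\normone{\bA}$. It follows that the group-sparse penalty is a positive multiple of the entrywise $\ell_1$ norm of $\bA$ over this parameterization, with the same zero pattern.

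The final sentence concerns exactness and approximation. When $\varepsilon_f=\varepsilon_g=0$, I would take (A5) to hold with $\varepsilon_{\mathrm{net}}=0$, so \eqref{eq:lifted_dynamics} holds with $\bar w_{i,k}$ reduced to pure lifted noise. The true lifted coupling is then exactly $\bA\otimes\bGamma^\phi$. For finite $N$, I would simply invoke the bound of Theorem~\ref{thm:lifting} together with the later error theorem and Corollary~\ref{cor:consistency}, which state that $\varepsilon_{\mathrm{net}}\to0$.

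The only delicate point is this last claim. Exact local invariance (A1)--(A2) does not by itself imply a zero network residual, because $\bpsi_i$ is nonlinear and does not commute with the sum $f_i(x_i)+\sum_j a_{ij}g(x_j)$. I would therefore state explicitly that exactness is understood under (A5) with $\varepsilon_{\mathrm{net}}=0$, rather than attempt to derive it from (A1)--(A2).
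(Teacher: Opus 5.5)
For the algebraic content your argument is the same as the paper's proof. It uses the Kronecker block identification $[\bA\otimes\bGamma^\phi]_{(i,j)}=a_{ij}\bGamma^\phi$, the converse via a nonzero entry of $\bGamma^\phi$, absolute homogeneity of the Frobenius norm, and summation over $(i,j)$. Your remark that $\bGamma^\phi\neq\bO$ already suffices is correct. The paper's proof stops there and never addresses the last sentence of the statement. Your diagnosis of its exactness half is right. Take linear $f_i$ and $g$ with the scalar dictionary $\bpsi(y)=[y,\,y^2]^\top$: (A1)--(A2) hold with $\varepsilon_f=\varepsilon_g=0$. Yet the second coordinate of $\bpsi_i\bigl(f_i(x_i)+\sum_m a_{im}g(x_m)\bigr)$ generically contains products of different nodes' states, which no expression $\bF_i^\phi\bpsi_i(x_i)+\sum_m a_{im}\bGamma^\phi\bpsi_m(x_m)$ can reproduce.

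Where your proposal slips is the $N\to\infty$ half. You accept it by citation, although your own objection applies to it verbatim. Neither Theorem~\ref{thm:lifting} nor Theorem~\ref{thm:error_bound} bounds $\varepsilon_{\mathrm{net}}$; both take it as an input. The proof of Corollary~\ref{cor:consistency} establishes only $\varepsilon_f^{(N)},\varepsilon_g^{(N)}\to0$ and then passes to $\mathcal{E}_\calK^{(N)}\to0$, which is precisely the step you rejected.

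In fact the claim fails in general. Take a node $i$ with two in-neighbours $j\neq l$, both distinct from $i$, and a nested dictionary containing the feature $y^2$. View the corresponding coordinate of the left side of (A5) as a function of $(x_j,x_l)$ with all other states frozen. Its mixed second difference $\Delta(v,v';w,w')$ equals $2a_{ij}a_{il}\bigl(g(v)-g(v')\bigr)\bigl(g(w)-g(w')\bigr)$. The right side of (A5) is a sum of single-node functions, so its mixed second difference vanishes. The pointwise residual is at most $\varepsilon_{\mathrm{net}}$, so its mixed difference is at most $4\varepsilon_{\mathrm{net}}$. Hence $\varepsilon_{\mathrm{net}}\geq\tfrac12|a_{ij}a_{il}|\,|g(v)-g(v')|\,|g(w)-g(w')|$ for all admissible $v,v',w,w'$, a bound independent of $N$. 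The final sentence of the lemma therefore cannot be derived from (A1)--(A4) and density. Your write-up should state explicitly that both its exactness and its vanishing parts are extra hypotheses on $\varepsilon_{\mathrm{net}}$.
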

\begin{proof}
    Please see Appendix \ref{app_lemma1} for further details.
\end{proof}

% \begin{proof}
% Since $[\bT^\phi]_{(i,j)} = a_{ij}\bGamma^\phi$, we have
% $[\bT^\phi]_{(i,j)} = \bO$ if and only if $a_{ij}\bGamma^\phi=\bO$.
% Because $\bGamma^\phi$ is invertible (hence $\bGamma^\phi\neq\bO$),
% this holds if and only if $a_{ij}=0$, establishing
% \eqref{eq:homomorphism}. Equation \eqref{eq:block_norm} follows
% directly from $\normF{a_{ij}\bGamma^\phi}=|a_{ij}|\normF{\bGamma^\phi}$
% and the sub-multiplicativity of the Frobenius norm. The equivalence
% between block-group regularization and $\ell_1$ regularization
% on $\bA$ follows from \eqref{eq:block_norm} by factoring out the
% constant $\normF{\bGamma^\phi}>0$.
% \end{proof}

\begin{remark}
The Structural Homomorphism Lemma serves as the theoretical foundation for joint state and topology inference in the Koopman framework. Its significance lies in ensuring ``structural consistency'': it guarantees that the high-dimensional linear representation does not obscure the underlying network dependencies. Without this lemma, one could not be certain that a sparse Koopman operator corresponds to a sparse physical graph. By proving that the group-Lasso penalty in the lifted space is a consistent proxy for the $\ell_1$ penalty in the original space, this result justifies the use of efficient convex optimization techniques (such as ADMM) to recover the exact graph topology from non-linear nodal dynamics.
\end{remark}

\begin{remark}
Lemma~\ref{lem:homomorphism} provides the rigorous theoretical
justification for replacing the entrywise $\ell_1$ regularizer
$\alpha\normone{\bA}$ of~\cite{fang2025joint} with the group-sparse
block regularizer $\alpha_g\sum_{i,j}\normF{[\bT^\phi]_{(i,j)}}$:
both penalize the same quantity (up to the constant
$\normF{\bGamma^\phi}$), but the block structure directly
corresponds to the graph edge structure. This equivalence also shows
that block-sparsity detection in $\bT^\phi$ is isomorphic to edge
detection in $\bA$, providing a clean theoretical connection between
the lifted-space optimization and the original graph inference problem.
\end{remark}

\subsection{Global Compact Form}
\label{subsec:compact}

Define $\bL=\diag([\bF_1^\phi,\ldots,\bF_p^\phi])\in\mathbb{R}^{Np\times Np}$,
$\bH^\phi=\diag([\bH_1^\phi,\ldots,\bH_p^\phi])\in\mathbb{R}^{qp\times Np}$,
$\bar{w}_k=\mathrm{col}\{\bar{w}_{i,k}\}$, $\bar{\bQ}_k =
\diag([\bar{\bQ}_{1,k},\ldots,\bar{\bQ}_{p,k}])$ with
$\bar{\bQ}_{i,k}\preceq(\sigma_{\calK,i}^2+\sigma_{\calN,i,k}^2)\bI_N$.
The global lifted system is then:
\begin{align}
  \bPsi_{k+1} &= (\bL + \bT^\phi)\,\bPsi_k + \bar{w}_k,
  \label{eq:global_lifted_state}\\
  z_k         &= \bH^\phi\,\bPsi_k + v_k.
  \label{eq:global_lifted_meas}
\end{align}
This system is \emph{structurally identical} to equations (3)--(4) of
\cite{fang2025joint} under the substitutions $p\to N$,
$\bF_{i,k}\to\bF_i^\phi$, $\bGamma\to\bGamma^\phi$,
$\bH_{i,k}\to\bH_i^\phi$, $x_k\to\bPsi_k$. This structural
equivalence is the key insight enabling the direct application of the
KF-ADMM framework to the original nonlinear problem.

%% ===============================================================
\section{Koopman-GKFA: Algorithm Design}
\label{sec:algorithm}
%% ===============================================================

\subsection{Joint Objective Function with Forgetting Factor}
\label{subsec:forgetting}

We design an online joint objective in the lifted space incorporating
an exponential forgetting factor $\gamma\in(0,1]$ to track
time-varying topologies. Let $k'=k-\tilde{k}+1$ define the window
start. The joint objective is:
\begin{equation}
  \{\hat{\bPsi}_k,\,\hat{\bA}_k\}
  = \argmin_{\bPsi_k,\,\bA}
    \bigl\{\Psi_1(\bPsi_k,\bA) + \Psi_2(\bA)\bigr\},
  \;\text{s.t.}\;\bA\bone_n=\bone_n,
  \label{eq:joint_obj}
\end{equation}
where the \emph{current-step fidelity term} is:
\begin{align}
  \Psi_1(\bPsi_k,\bA) &=
  \normsub{\bPsi_k - (\bL+\hat{\bT}^\phi_{k-1})\hat{\bPsi}_{k-1}}
           {\tilde{\bP}_{k|k-1}}\nonumber\\
  &\quad+ \normsub{z_k - \bH^\phi\bPsi_k}{\bR_k},
  \label{eq:Psi1}
\end{align}
and the \emph{historical topology term with forgetting factor} is:
\begin{align}
  \Psi_2(\bA)&= \tilde{k}\,\alpha_g \sum_{i,j}
     \normF{a_{ij}\bGamma^\phi}\nonumber\\
  &+ \sum_{i=k'}^{k-1}\gamma^{k-1-i}\Bigl[
    \lambda\|{\hat{\bpsi}_i
      - (\bL+\bT^\phi)\hat{\bpsi}_{i-1}}\|^2
    \nonumber\\
  & + (1-\lambda)\|{\bOmega^\phi_i z_i
      - (\bL+\bT^\phi)\bOmega^\phi_{i-1}z_{i-1}}\|^2
    \Bigr],
  \label{eq:Psi2}
\end{align}
where $\bOmega^\phi_k=((\bH^\phi)^\top\bH^\phi)^{-1}(\bH^\phi)^\top$
is the lifted pseudo-inverse, $\alpha_g>0$ is the group-sparsity
parameter, and $\lambda\in[0,1]$ balances state and measurement
residuals.

By Lemma~\ref{lem:homomorphism}, the group-sparse regularizer satisfies
$\sum_{i,j}\normF{a_{ij}\bGamma^\phi} =
\normF{\bGamma^\phi}\normone{\bA}$, so \eqref{eq:Psi2} reduces to
the $\ell_1$ objective of~\cite{fang2025joint} (up to a constant
factor) while explicitly encoding the isomorphism between block
structure and topology.

\begin{remark}[Role of $\gamma$]
For $\gamma=1$, \eqref{eq:Psi2} reduces to the uniform-weight
objective of~\cite{fang2025joint}. For $\gamma<1$, recent
observations receive higher weight, enabling tracking of
time-varying topologies. The contraction factor $\gamma$ plays a
role analogous to the forgetting factor in recursive least
squares~\cite{sayed2014adaptation}.
\end{remark}

\subsection{Convexity Decomposition}
\label{subsec:decomp}

\begin{proposition}[Decoupled Convex Subproblems]
\label{prop:decomp}
For fixed $\hat{\bA}_{k-1}$, the minimization of $\Psi_1$ over
$\bPsi_k$ is strictly convex and is solved in closed form by the
Kalman filter update (Section~\ref{subsec:kf_update}). For fixed
$\hat{\bPsi}_k$, the minimization of $\Psi_2$ over $\bA$ is strictly
convex and is solved by the group-sparse ADMM
(Section~\ref{subsec:admm_update}).
\end{proposition}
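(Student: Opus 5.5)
The proof splits into the two subproblems; in each we exhibit a positive definite Hessian or a strongly convex quadratic plus a convex penalty. The notation $\norm{x}_\bP = x^\top\bP^{-1}x$ makes every fidelity term a weighted quadratic.

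\textbf{State subproblem.} For fixed $\hat{\bA}_{k-1}$, hence fixed $\hat{\bT}^\phi_{k-1}=\hat{\bA}_{k-1}\otimes\bGamma^\phi$, $\Psi_1$ is a quadratic in $\bPsi_k$ with Hessian
\begin{equation*}
2\bigl(\tilde{\bP}_{k|k-1}^{-1}+(\bH^\phi)^\top\bR_k^{-1}\bH^\phi\bigr).
\end{equation*}
Since $\tilde{\bP}_{k|k-1}\succ\bO$, the first term is positive definite, and the second is positive semidefinite. Positive definiteness of the prior covariance follows inductively from $\bar{\bQ}_k\succ\bO$, or from any positive definite initialization, through the Riccati prediction step. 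The Hessian is therefore $\succ\bO$ and $\Psi_1$ is strictly convex.

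Setting the gradient to zero gives the information-form update
\begin{equation*}
\hat{\bPsi}_k=\bigl(\tilde{\bP}^{-1}+(\bH^\phi)^\top\bR_k^{-1}\bH^\phi\bigr)^{-1}\bigl(\tilde{\bP}^{-1}\bPsi_{k|k-1}+(\bH^\phi)^\top\bR_k^{-1}z_k\bigr).
\end{equation*}
Applying the matrix inversion lemma rewrites this as $\bPsi_{k|k-1}+\bK_k(z_k-\bH^\phi\bPsi_{k|k-1})$, with the standard Kalman gain $\bK_k$. This is exactly the Kalman filter update of Section~\ref{subsec:kf_update}.

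\textbf{Topology subproblem.} The plan is to vectorize. Write $(\bL+\bT^\phi)\hat{\bpsi}_{i-1}=\bL\hat{\bpsi}_{i-1}+(\bA\otimes\bGamma^\phi)\hat{\bpsi}_{i-1}$. Reshape $\hat{\bpsi}_{i-1}$ into $\bY_{i-1}\in\mathbb{R}^{N\times p}$, so that the vec-Kronecker identity gives
\begin{equation*}
(\bA\otimes\bGamma^\phi)\hat{\bpsi}_{i-1}=\vecop(\bGamma^\phi\bY_{i-1}\bA^\top)=\bigl(\bI_p\otimes\bGamma^\phi\bY_{i-1}\bigr)\vecop(\bA^\top)\triangleq\bM_{i-1}\,\mathbf{a}.
\end{equation*}
This map is linear in $\mathbf{a}=\vecop(\bA^\top)$. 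The same construction applies to the measurement residuals, with $\bOmega^\phi_{i-1}z_{i-1}$ in place of $\hat{\bpsi}_{i-1}$, giving matrices $\bM'_{i-1}$.

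The smooth part of $\Psi_2$ is then a quadratic with Hessian
\begin{equation*}
\bG_k = 2\sum_{i=k'}^{k-1}\gamma^{k-1-i}\bigl[\lambda\bM_{i-1}^\top\bM_{i-1}+(1-\lambda)\bM_{i-1}'^\top\bM'_{i-1}\bigr]\succeq\bO.
\end{equation*}
By Lemma~\ref{lem:homomorphism}, the penalty equals $\tilde{k}\alpha_g\normF{\bGamma^\phi}\normone{\bA}$, which is convex. The constraint $\bA\bone=\bone$ is affine. Convexity of the subproblem is therefore immediate.

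\textbf{Main obstacle.} Strict convexity requires $\bG_k\succ\bO$, since the $\ell_1$ term is not strictly convex. Because $\bG_k$ has block structure $\bI_p\otimes(\cdot)$, $\bG_k\succ\bO$ is equivalent to
\begin{equation*}
\sum_{i}\gamma^{k-1-i}\bigl[\lambda(\bGamma^\phi\bY_{i-1})^\top\bGamma^\phi\bY_{i-1}+(1-\lambda)(\cdot)\bigr]\succ\bO.
\end{equation*}
Invertibility of $\bGamma^\phi$ (Lemma~\ref{lem:homomorphism}) reduces this to a persistent-excitation condition. The windowed, $\gamma$-weighted Gram matrix $\sum_i\gamma^{k-1-i}\bY_{i-1}^\top\bY_{i-1}$ must be nonsingular, that is, the lifted node trajectories must span $\mathbb{R}^p$ over the window. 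This holds for $\tilde{k}\ge p/N$ generic samples, and almost surely under the Gaussian process noise. Since $\gamma^{k-1-i}>0$, forgetting does not destroy rank. I would state this excitation condition explicitly as the hypothesis under which ``strictly'' holds.

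Given $\bG_k\succ\bO$, the objective is a strongly convex quadratic plus a convex penalty under affine constraints. It therefore has a unique minimizer, and the group-sparse ADMM of Section~\ref{subsec:admm_update} applies, splitting the penalty into a block soft-thresholding proximal step.
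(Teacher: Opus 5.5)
Your state-subproblem argument follows the paper's: positive definiteness of $\tilde{\bP}_{k|k-1}^{-1}$ and $\bR_k^{-1}$, with the information-form and matrix-inversion-lemma identification made explicit. One small correction: a positive definite initialization alone does not survive the prediction step if $\bL+\hat{\bT}^\phi_{k-1}$ is singular. You therefore do need $\bar{\bQ}_{k-1}\succ\bO$, which the weighted-norm notation presupposes anyway.

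For the topology subproblem your route genuinely differs from the paper's, and yours is the more defensible one. The paper gets ``strict convexity'' by citing Lemma~\ref{lem:Jk_pd}. But the positive definiteness of $\bJ_k=\frac{2}{\tilde{k}_\gamma}\bL_k+\bN_k$ comes entirely from $\bN_k$, i.e.\ from the $\beta_k$ augmented-Lagrangian terms, which are not part of $\Psi_2$; the data term alone only gives $\bL_k\succeq\bO$. So the paper's argument shows that the ADMM $\bT^\phi$-step is strictly convex, not that $\Psi_2$ is.

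You instead compute the actual Hessian in the true variable $\mathbf{a}=\vecop(\bA^\top)$ via $(\bA\otimes\bGamma^\phi)\vecop(\bY)=\vecop(\bGamma^\phi\bY\bA^\top)$. You observe that it is only positive semidefinite in general, and you make the missing excitation hypothesis explicit. Some such hypothesis is genuinely needed. For example, if the windowed regressors vanish, $\Psi_2$ reduces to a constant plus a multiple of $\normone{\bA}$, which is constant on nonnegative matrices with unit row sums. Your $p\times p$ condition is also weaker than the paper's Assumption~\ref{ass:stability}(B3). If $c\neq\bzero$ annihilates every $\bY_{i-1}$ and its measurement analogue, then each $c\otimes e_l$ is orthogonal to every windowed regressor. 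Hence $\lambda_{\min}(\bL_k)>0$ on the same window implies your condition, but not conversely (roughly $p/N$ versus $Np$ window samples).

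The one place your argument is thinner than it looks is the final sentence. The ADMM of Section~\ref{subsec:admm_update} runs over unstructured $\bT^\phi\in\mathbb{R}^{Np\times Np}$:
\begin{itemize}
\item its iterates are not confined to the form $\bA\otimes\bGamma^\phi$;
\item the block penalty coincides with $\normF{\bGamma^\phi}\normone{\bA}$ only on that Kronecker subspace;
\item $\hat{\bA}_k$ is read off from a single entry.
\end{itemize}
Strict convexity in $\bA$ therefore does not by itself show that this ADMM returns the $\bA$-minimizer of $\Psi_2$. To close this, you have two options. You can run the ADMM directly in $\mathbf{a}$, where the proximal step becomes scalar soft-thresholding. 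Alternatively, you can assume (B3), which makes the relaxed $\bT^\phi$-problem strongly convex with a unique minimizer, and state explicitly that this relaxed minimizer is what the algorithm computes. The paper's proof passes over the same point.
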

\begin{proof}
The strict convexity of $\Psi_1$ in $\bPsi_k$ follows from the
positive definiteness of the weight matrices $\tilde{\bP}_{k|k-1}^{-1}$
and $\bR_k^{-1}$. For $\Psi_2$ in $\bA$: each squared-residual term
is quadratic (hence convex) in $\bA$ via $\bT^\phi=\bA\otimes\bGamma^\phi$;
strict convexity is established by showing the Hessian of the
quadratic part is positive definite (Lemma~\ref{lem:Jk_pd} below);
$\sum_{i,j}\normF{a_{ij}\bGamma^\phi}$ is convex in $\bA$; and the
affine constraint $\bA\bone_n=\bone_n$ preserves convexity.
\end{proof}

\subsection{State Estimation via Kalman Filtering}
\label{subsec:kf_update}

With $\hat{\bT}^\phi_{k-1}=\hat{\bA}_{k-1}\otimes\bGamma^\phi$ fixed,
the lifted state $\bPsi_k$ is estimated by:

\paragraph{Prediction:}
\begin{align}
  \hat{\bPsi}_{k|k-1}
  &= \bigl(\bL + \hat{\bT}^\phi_{k-1}\bigr)\hat{\bPsi}_{k-1},
  \label{eq:kf_pred_state}\\
  \tilde{\bP}_{k|k-1}
  &= \bigl(\bL + \hat{\bT}^\phi_{k-1}\bigr)\tilde{\bP}_{k-1}
     \bigl(\bL + \hat{\bT}^\phi_{k-1}\bigr)^\top + \bar{\bQ}_{k-1}.
  \label{eq:kf_pred_cov}
\end{align}

\paragraph{Update:}
\begin{align}
  \bK_k
  &= \tilde{\bP}_{k|k-1}(\bH^\phi)^\top
     \bigl(\bH^\phi\tilde{\bP}_{k|k-1}(\bH^\phi)^\top+\bR_k\bigr)^{-1},
  \label{eq:kalman_gain}\\
  \hat{\bPsi}_k
  &= \hat{\bPsi}_{k|k-1}
    + \bK_k\bigl(z_k - \bH^\phi\hat{\bPsi}_{k|k-1}\bigr),
  \label{eq:kf_update_state}\\
  \tilde{\bP}_k
  &= (\bI_{Np} - \bK_k\bH^\phi)\tilde{\bP}_{k|k-1}.
  \label{eq:kf_update_cov}
\end{align}
The original state estimate is recovered by back-projection:
$\hat{x}_{i,k} = \bPi\,\hat{\bpsi}_{i,k}$, which is exact under
Assumption~\ref{ass:dictionary}(A4).

\subsection{Topology Inference via Group-Sparse ADMM}
\label{subsec:admm_update}

\subsubsection{Equivalent Formulation over \texorpdfstring{$\bT^\phi$}{T-phi}}
\label{subsubsec:T_reform}

Since $\bT^\phi=\bA\otimes\bGamma^\phi$ with $\bGamma^\phi$ known,
we optimize directly over $\bT^\phi$ and recover $\hat{\bA}_k$ via
post-processing. Define the per-step forgetting-weighted residual:
\begin{align}
  \phi_i(\bT^\phi) &\triangleq
  \lambda\|{\hat{\bpsi}_i - (\bL+\bT^\phi)\hat{\bpsi}_{i-1}}\|^2\nonumber\\
  &+ (1-\lambda)\|{\bOmega^\phi_i z_i
    - (\bL+\bT^\phi)\bOmega^\phi_{i-1}z_{i-1}}\|^2.
  \label{eq:per_step}
\end{align}
Using Lemma~\ref{lem:homomorphism}, the group-sparse regularizer
becomes:
\begin{equation}
  \sum_{i,j}\normF{[\bT^\phi]_{(i,j)}}
  = \sum_{i,j}|a_{ij}|\normF{\bGamma^\phi}.
  \label{eq:group_reg_equiv}
\end{equation}
The topology subproblem is then:
\begin{align}
  \hat{\bT}^\phi_k& = \argmin_{\bT^\phi}
  \;\frac{1}{\tilde{k}_\gamma}\sum_{i=k'}^{k}
  \gamma^{k-i}\phi_i(\bT^\phi)
  + \alpha_g\sum_{i,j}\normF{[\bT^\phi]_{(i,j)}},\nonumber\\
  &\quad\text{s.t.}\;\bT^\phi(\bone_n\otimes\bI_N)=\bone_n\otimes\bGamma^\phi,
  \label{eq:topology_subprob}
\end{align}
where $\tilde{k}_\gamma=\sum_{i=0}^{\tilde{k}-1}\gamma^i =
(1-\gamma^{\tilde{k}})/(1-\gamma)$ is the effective window size.

\subsubsection{Block-Soft-Thresholding via ADMM}
\label{subsubsec:block_soft}

Introduce auxiliary variable $\bC_k\in\mathbb{R}^{Np\times Np}$ with
constraint $\bT^\phi=\bC_k$ to separate the smooth and non-smooth
parts. The augmented Lagrangian is:
\begin{align}
  &\mathcal{L}_k(\bT^\phi,\bC_k,\bD_k,\bG_k)\nonumber\\
  &= \frac{1}{\tilde{k}_\gamma}\sum_{i=k'}^k\gamma^{k-i}\phi_i(\bT^\phi)
  + \alpha_g\sum_{i,j}\normF{[\bC_k]_{(i,j)}}
  \nonumber\\
  &\quad+ \inner{\bD_k}{\bT^\phi-\bC_k}
  + \inner{\bG_k}{\bT^\phi(\bone_n\otimes\bI_N)-\bone_n\otimes\bGamma^\phi}
  \nonumber\\
  &\quad+ \frac{\beta_k}{2}\Bigl(
    \normF{\bT^\phi-\bC_k}^2
    + \normF{\bT^\phi(\bone_n\otimes\bI_N)-\bone_n\otimes\bGamma^\phi}^2
  \Bigr).
  \label{eq:aug_lagrangian}
\end{align}

\paragraph{Aggregated matrices.}
Define the following quantities, which depend on $\hat{\bpsi}_i$ and
$z_i$ within the current window:
\begin{align}
  \bL_k &= \sum_{i=k'}^{k}\gamma^{k-i}
    \Bigl[\lambda\,\hat{\bpsi}_{i-1}\hat{\bpsi}_{i-1}^\top\nonumber\\
    &\quad+ (1-\lambda)\,\bOmega^\phi_{i-1}z_{i-1}
      (\bOmega^\phi_{i-1}z_{i-1})^\top\Bigr],
  \label{eq:Lk}\\
  \bN_k &= \beta_k\bigl[\bI_{Np}
    + (\bone_p\bone_p^\top)\otimes\bI_N\bigr],
  \label{eq:Nk}\\
  \bJ_k &= \frac{2}{\tilde{k}_\gamma}\bL_k + \bN_k.
  \label{eq:Jk}
\end{align}

\begin{lemma}[Positive Definiteness of $\bJ_k$]
\label{lem:Jk_pd}
For any $\beta_k>0$, the matrix $\bJ_k\succ\bO$.
\end{lemma}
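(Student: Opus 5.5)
The plan is to split $\bJ_k$ into a positive semidefinite part and a positive definite part, and then use the fact that adding a PSD matrix to a PD matrix keeps it PD.

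\textbf{The data term.} First show that $\bL_k\succeq\bO$. By \eqref{eq:Lk}, $\bL_k$ is a nonnegative combination of rank-one Gram matrices $uu^\top$ with $u=\hat{\bpsi}_{i-1}$ or $u=\bOmega^\phi_{i-1}z_{i-1}$. The weights are $\gamma^{k-i}\lambda\ge0$ and $\gamma^{k-i}(1-\lambda)\ge0$, since $\gamma\in(0,1]$ and $\lambda\in[0,1]$. For any $v\in\mathbb{R}^{Np}$,
\begin{equation*}
v^\top\bL_k v=\sum_{i=k'}^{k}\gamma^{k-i}\bigl[\lambda(\hat{\bpsi}_{i-1}^\top v)^2+(1-\lambda)((\bOmega^\phi_{i-1}z_{i-1})^\top v)^2\bigr]\ge0 .
\end{equation*}
Since $\tilde{k}_\gamma>0$, the first summand $\frac{2}{\tilde{k}_\gamma}\bL_k$ is PSD. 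It need not be PD, because the window may contain fewer than $Np$ linearly independent lifted vectors. So no argument about excitation of the data is needed or attempted.

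\textbf{The penalty term.} Next show that $\bN_k\succ\bO$. Write $\bN_k=\beta_k[\bI_{Np}+(\bone_p\bone_p^\top)\otimes\bI_N]$. The matrix $\bone_p\bone_p^\top$ is PSD with eigenvalues $p$ and $0$, the latter with multiplicity $p-1$. Eigenvalues of a Kronecker product are products of the factors' eigenvalues, so $(\bone_p\bone_p^\top)\otimes\bI_N$ has eigenvalues $p$ and $0$ and is PSD. Hence
\begin{equation*}
\bN_k\succeq\beta_k\bI_{Np}, \qquad \lambda_{\min}(\bN_k)=\beta_k>0 .
\end{equation*}
Combining the two parts gives $v^\top\bJ_k v\ge\beta_k\norm{v}_2^2>0$ for all $v\neq\bzero$. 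This establishes $\bJ_k\succ\bO$ with the explicit bound $\lambda_{\min}(\bJ_k)\ge\beta_k$, which is useful later for the ADMM convergence rate.

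\textbf{Expected obstacle.} The only delicate point is notational. The statement of $\bN_k$ uses $\bone_p$, while the constraint in \eqref{eq:topology_subprob} is written with $\bone_n\otimes\bI_N$. I will read these consistently as the $p$-dimensional all-ones vector, so that the dimensions match $Np\times Np$. The argument does not depend on this reading: any Gram-type term $\bM^\top\bM$ arising from the quadratic constraint penalty is PSD. Positive definiteness therefore always comes from the identity contribution $\beta_k\bI_{Np}$ of $\normF{\bT^\phi-\bC_k}^2$.
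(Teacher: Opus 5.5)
Your proof is correct and follows the paper's argument: $\bL_k\succeq\bO$ because it is a nonnegative combination of outer products, and $\bN_k\succeq\beta_k\bI_{Np}$ because $(\bone_p\bone_p^\top)\otimes\bI_N\succeq\bO$, so $\bJ_k\succeq\beta_k\bI_{Np}\succ\bO$. One harmless nit: when $p=1$ the matrix $\bN_k$ equals $2\beta_k\bI_N$, so $\lambda_{\min}(\bN_k)=2\beta_k$ rather than $\beta_k$; this does not matter, since you only use the lower bound $\bN_k\succeq\beta_k\bI_{Np}$.
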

\begin{proof}
$\bL_k\succeq\bO$ (positive semidefinite as a sum of outer products).
$\bN_k\succ\bO$ because both $\bI_{Nn}$ and
$(\bone_p\bone_p^\top)\otimes\bI_N$ are positive semidefinite and
their sum scaled by $\beta_k>0$ has eigenvalues at least $\beta_k$.
Therefore $\bJ_k=\frac{2}{\tilde{k}_\gamma}\bL_k+\bN_k\succ\bO$.
\end{proof}

\paragraph{$\bT^\phi$-update (closed form).}
Setting $\nabla_{\bT^\phi}\mathcal{L}_k=\bO$ and defining the
right-hand side matrix:
\begin{align}
  \bS_k^{(r-1)} &=
    \frac{2}{\tilde{k}_\gamma}\sum_{i=k'}^k\gamma^{k-i}\Bigl[
    \lambda\bigl(\hat{\bpsi}_i-\bL\hat{\bpsi}_{i-1}\bigr)
      \hat{\bpsi}_{i-1}^\top\nonumber\\
    &\quad+(1-\lambda)\bigl(\bOmega^\phi_i z_i
      - \bL\bOmega^\phi_{i-1}z_{i-1}\bigr)
      (\bOmega^\phi_{i-1}z_{i-1})^\top\Bigr]
  \nonumber\\
  &\quad+ \beta_k\bigl[(\bone_p\bone_p^\top)\otimes\bGamma^\phi
    + \bC_k^{(r-1)}\bigr]\nonumber\\
  &\quad- \bD_k^{(r-1)} - \bG_k^{(r-1)}(\bone_p^\top\otimes\bI_N),
  \label{eq:Sk}
\end{align}
the unique solution is:
\begin{equation}
  \boxed{
    \hat{\bT}^{\phi,(r)}_k = \bS_k^{(r-1)}\,\bV_k,
    \qquad \bV_k = \bJ_k^{-1}.
  }
  \label{eq:T_update}
\end{equation}
The Hessian $\bXi_k = \bJ_k^\top\otimes\bI_{Np}\succ\bO$
confirms strict convexity and global uniqueness of~\eqref{eq:T_update}.

\paragraph{$\bC_k$-update (block soft-thresholding).}
The $\bC_k$-update is the proximal operator of the block-group
penalty $\alpha_g\sum_{i,j}\normF{[\cdot]_{(i,j)}}$:
\begin{align}
  \bC_k^{(r)} &= \argmin_{\bC}
  \{
    \alpha_g\sum_{i,j}\normF{[\bC]_{(i,j)}}\nonumber\\
    &\quad+ \frac{\beta_k}{2}\normF{\bC
      - \hat{\bT}^{\phi,(r)}_k - \bD_k^{(r-1)}/\beta_k}^2
  \}.
  \label{eq:C_min}
\end{align}
Since the blocks are decoupled in $\bC$, \eqref{eq:C_min} separates
into independent per-block problems. Each $(i,j)$-block is solved by
the matrix-valued soft-thresholding (block proximal operator):
\begin{align}
    \bC_k^{(r)}]_{(i,j)}
    &= \prox_{(\alpha_g/\beta_k)\|\cdot\|_F}\!
      \bigl([\hat{\bT}^{\phi,(r)}_k + \bD_k^{(r-1)}/\beta_k]_{(i,j)}\bigr)\nonumber\\ 
    &=   \boxed{\mathbf{U}_{ij}^{(r)}\cdot\max\!\bigl(0,\,\sigma_{ij}^{(r)} - \alpha_g/\beta_k\bigr),
  }
  \label{eq:C_update}
\end{align}
where $[\hat{\bT}^{\phi,(r)}_k+\bD_k^{(r-1)}/\beta_k]_{(i,j)} =
\mathbf{U}_{ij}^{(r)}\bSigma_{ij}^{(r)}(\bV_{ij}^{(r)})^\top$ is the SVD and
$\sigma_{ij}^{(r)}=\normF{[\hat{\bT}^{\phi,(r)}_k
+\bD_k^{(r-1)}/\beta_k]_{(i,j)}}$.

\begin{remark}
The update~\eqref{eq:C_update} is the matrix-Frobenius
soft-thresholding operator~\cite{combettes2011proximal}. When
$\bGamma^\phi\in\mathbb{R}^{N\times N}$ is rank-1 (the linear
case $\bGamma^\phi=\gamma_0$), \eqref{eq:C_update} reduces to the
scalar soft-thresholding of~\cite{fang2025joint} after factoring out
$\gamma_0$, confirming consistency with the linear framework.
\end{remark}

\paragraph{Multiplier updates:}
\begin{align}
  \bD_k^{(r)} &= \bD_k^{(r-1)}
    + \beta_k\bigl(\hat{\bT}^{\phi,(r)}_k - \bC_k^{(r)}\bigr),
  \label{eq:D_update}\\
  \bG_k^{(r)} &= \bG_k^{(r-1)}
    + \beta_k\bigl[\hat{\bT}^{\phi,(r)}_k(\bone_p\otimes\bI_N)
      - \bone_p\otimes\bGamma^\phi\bigr].
  \label{eq:G_update}
\end{align}

\subsubsection{Recovery of Topology Matrix}
Let $t_{uv}$ be the largest-magnitude entry of $\bGamma^\phi$
with position indices $(u,v)$; set $u_i=u+N(i-1)$, $v_j=v+N(j-1)$.
Then:
\begin{equation}
  \boxed{\hat{a}_{ij,k} = \hat{s}_{u_i v_j,k} / t_{uv},}
  \label{eq:topology_recovery}
\end{equation}
where $\hat{s}_{u_i v_j,k}$ is the $(u_i,v_j)$-th entry of
$\hat{\bT}^\phi_k$.

\subsection{Offline EDMD Pre-Training}
\label{subsec:pretrain}

\paragraph{Self-dynamics $\bF_i^\phi$.}
Collect $n_f$ isolated trajectory pairs $\{(x_{i,\ell},
x_{i,\ell+1})\}_{\ell=1}^{n_f}$ from node $i$ with all edges
disconnected (or from a reference experiment with known zero
coupling), and solve:
$\hat{\bF}_i^\phi = \argmin_{\bF}\sum_\ell\|{\bpsi_i
(x_{i,\ell+1})-\bF\bpsi_i(x_{i,\ell})}\|^2$.

\paragraph{Coupling $\bGamma^\phi$.}
Collect pairs $\{(x_\ell, g(x_\ell))\}_{\ell=1}^{n_g}$ and solve:
$\hat{\bGamma}^\phi = \argmin_{\bGamma}\sum_\ell
\left\lVert\bpsi_i(g(x_\ell))-\bGamma\bpsi_i(x_\ell)\right\rVert_2^2$.

\paragraph{Measurement $\bH_i^\phi$.}
If $h_i$ is analytic with components in $\mathrm{span}\{\bpsi_i\}$,
compute $\bH_i^\phi$ directly; otherwise use EDMD on
$\{(x_\ell,h_i(x_\ell))\}$.

\subsection{Complete Algorithm}
\label{subsec:complete_alg}

Algorithm~\ref{alg:koopman_gkfa} summarizes the complete
Koopman-GKFA procedure.

\begin{algorithm}[ht]
\caption{Koopman-GKFA: Koopman Group-sparse KF-ADMM}
\label{alg:koopman_gkfa}
\begin{algorithmic}[1]
\REQUIRE
  Koopman matrices $\{\hat{\bF}_i^\phi\}$, $\hat{\bGamma}^\phi$,
  $\{\hat{\bH}_i^\phi\}$ (pre-trained offline);
  $\hat{\bPsi}_0=\bPhi(x_0)$, $\tilde{\bP}_0$;
  Parameters $\lambda$, $\alpha_g$, $\beta_k$, $\gamma$, $\eta$,
  $r_{\max}$, window $\tilde{k}$;
  Measurements $\{z_t\}_{t=1}^K$.
\ENSURE $\hat{\bA}_k$, $\hat{x}_k$, $\tilde{\bP}_k$ for each $k$.
\FOR{$k = 1, 2, \ldots, K$}
  \STATE \textit{\# --- Kalman Prediction (Lifted Space) ---}
  \STATE Compute $\hat{\bPsi}_{k|k-1}$ and $\tilde{\bP}_{k|k-1}$
         via~\eqref{eq:kf_pred_state}--\eqref{eq:kf_pred_cov}.
  \STATE \textit{\# --- Kalman Update ---}
  \STATE Compute $\bK_k$, $\hat{\bPsi}_k$, $\tilde{\bP}_k$
         via~\eqref{eq:kalman_gain}--\eqref{eq:kf_update_cov}.
  \STATE \textit{\# --- Back-projection ---}
  \STATE $\hat{x}_{i,k} \leftarrow \bPi\hat{\bpsi}_{i,k}$
         for all $i\in\mathcal{V}$.
  \STATE \textit{\# --- Group-Sparse ADMM (Inner Loop) ---}
  \STATE Initialize: $\hat{\bT}^{\phi,(0)}_k=\hat{\bT}^\phi_{k-1}$,
         $\bC_k^{(0)}=\bD_k^{(0)}=\bG_k^{(0)}=\bO$.
  \STATE Compute $\bV_k=\bJ_k^{-1}$ via~\eqref{eq:Lk}--\eqref{eq:Jk}.
  \STATE Set $r\leftarrow 0$.
  \REPEAT
    \STATE $r\leftarrow r+1$.
    \STATE Update $\hat{\bT}^{\phi,(r)}_k$ via~\eqref{eq:T_update}.
    \STATE Update $[\bC_k^{(r)}]_{(i,j)}$ via~\eqref{eq:C_update}
           for all $(i,j)$.
    \STATE Update $\bD_k^{(r)}$ via~\eqref{eq:D_update}.
    \STATE Update $\bG_k^{(r)}$ via~\eqref{eq:G_update}.
  \UNTIL{$\normF{\hat{\bT}^{\phi,(r)}_k-\hat{\bT}^{\phi,(r-1)}_k}\leq
    \eta\normF{\hat{\bT}^{\phi,(r-1)}_k}$ or $r\geq r_{\max}$}
  \STATE $\hat{\bT}^\phi_k\leftarrow\hat{\bT}^{\phi,(r)}_k$.
  \STATE Compute $\hat{\bA}_k$ via~\eqref{eq:topology_recovery}.
\ENDFOR
\end{algorithmic}
\end{algorithm}

\begin{remark}[Convergence and Stability of Joint Tracking]
\label{rem:convergence_stability}
Since the joint optimization objective in \eqref{eq:joint_obj} is inherently non-convex and dynamically evolves over time, establishing its convergence to a static joint global minimum is not well-posed. Instead, the convergence of Koopman-GKFA is characterized through a dynamic estimation and tracking paradigm. Specifically, at any given time step $k$, the algorithm guarantees \textit{conditional optimality} because the decoupled topology subproblem is strictly convexified via Koopman lifting, ensuring that the inner ADMM iterates converge to the unique conditional minimizer at a strictly linear rate (Theorem \ref{thm:admm_convergence}). Across the temporal horizon, the system convergence is manifested as the \textit{bounded-error stability} of the recursive state-tracking error dynamics, which are globally confined within a quantifiable invariant set rather than diverging (Theorem \ref{thm:error_bound}). Furthermore, under the Group-Lasso Irrepresentability Condition \cite{zhao2006model},\cite{bach2008consistency}, as the dictionary dimension $N \to \infty$ and the observation window $k \to \infty$, the sequence of topological estimates asymptotically achieves exact graph support consistency (Corollary \ref{cor:sparsity}), meaning that the online estimator sequence converges to the true physical network topology in probability.
\end{remark}
% \begin{remark}[Computational Complexity]
% \label{rem:complexity}
% The per-step computational footprint of Koopman-GKFA is governed by the Kalman update and the inner ADMM loop, both operating in the lifted state space of dimension $N_p = Np$. The Kalman filter steps entail matrix inversions and multiplications bounded by $\mathcal{O}(N^3 p^3)$. For the group-sparse ADMM, computing the initial inverse $V_k$ requires $\mathcal{O}(N^3 p^3)$. Within each of the $R$ inner iterations, the $\hat{T}_k^\phi$-update dominates with $\mathcal{O}(N^3 p^3)$ operations, while the block soft-thresholding step elegantly decouples into $p^2$ independent $N \times N$ Singular Value Decompositions (SVDs), scaling gracefully as $\mathcal{O}(p^2 N^3)$. Consequently, the overall per-step complexity is $\mathcal{O}((R+1)N^3 p^3)$. Crucially, the guaranteed linear convergence of the ADMM subproblem (established in Theorem 6.1) ensures that the iteration count scales logarithmically with the target precision, i.e., $R \sim \mathcal{O}(\log(1/\epsilon))$. Therefore, despite the dimension expansion inherent to Koopman lifting, the proposed framework maintains a strictly polynomial complexity $\mathcal{O}(N^3 p^3)$, effectively circumventing the exponential curse of dimensionality (e.g., $\mathcal{O}(e^{pn})$) that persistently plagues particle-based nonlinear estimation methods.
% \end{remark}

\subsection{Computational Complexity Analysis}
\label{sec:complexity}

In this subsection, we analyze the per-step computational complexity of the proposed Koopman-GKFA algorithm. Let $p$ denote the number of network nodes, $n$ the sample size, $q$ the measurement dimension per node, and $N$ the truncated Koopman dictionary dimension per node. The global lifted state dimension is thus $N_p = Np$. 

The computational footprint at each time step $k$ is primarily governed by two sequential modules: the Kalman filtering in the lifted space and the inner group-sparse ADMM iterations.

\subsubsection{Kalman Filter Update}
The Kalman prediction and update steps (lines 2-5 in Algorithm \ref{alg:koopman_gkfa}) process the globally lifted linear system. The prediction of the error covariance matrix $\tilde{\bP}_{k|k-1} \in \mathbb{R}^{Np \times Np}$ involves matrix multiplications with complexity $\mathcal{O}((Np)^3)$. The computation of the Kalman gain $\bK_k$ requires the inversion of the innovation covariance matrix $(\bH^\phi \tilde{\bP}_{k|k-1} (\bH^\phi)^\top + \bR_k) \in \mathbb{R}^{qp \times qp}$, demanding $\mathcal{O}((qp)^3)$ operations. Consequently, assuming $q \le N$, the total complexity for the state tracking module is bounded by $\mathcal{O}(N^3 p^3)$.

\subsubsection{Group-Sparse ADMM Topology Inference}
The ADMM subproblem (lines 8-20 in Algorithm 1) necessitates an initialization phase followed by iterative updates:
\begin{itemize}
    \item \textbf{Initialization:} Computing the inverse of the lifted information matrix $\bV_k = \bJ_k^{-1} \in \mathbb{R}^{Np \times Np}$ is performed exactly once per time step, taking $\mathcal{O}(N^3 p^3)$ operations.
    \item \textbf{Inner Iterations:} Let $r_{\max}$ denote the total number of ADMM iterations executed at step $k$. Within each iteration:
    \begin{enumerate}
        \item The $\bT^\phi$-update involves multiplying the explicitly constructed right-hand-side matrix $\bS_k^{(r-1)}$ by $\bV_k$. This matrix multiplication constitutes the dominant inner-loop cost, taking $\mathcal{O}(N^3 p^3)$ operations.
        \item The $\bC_k$-update applies block soft-thresholding across the decoupled $(i,j)$-th blocks. Performing the Singular Value Decomposition (SVD) on each $N \times N$ block requires $\mathcal{O}(N^3)$ operations. Across all $p^2$ blocks, this step takes $\mathcal{O}(p^2 N^3)$.
        \item The multiplier updates for $\bD_k$ and $\bG_k$ only entail element-wise matrix additions, requiring $\mathcal{O}(N^2 p^2)$ operations.
    \end{enumerate}
\end{itemize}
Therefore, the total computational cost of the ADMM loop is $\mathcal{O}(N^3 p^3 + r_{\max} \cdot N^3 p^3)$.

\subsubsection{Overall Complexity and Scalability}
Synthesizing the above analyses, the overall computational complexity of Koopman-GKFA per time step evaluates to $\mathcal{O}((r_{\max}+1) N^3 p^3)$. Crucially, as mathematically established in Theorem \ref{thm:admm_convergence}, the strong convexity of the lifted subproblem guarantees a \textit{linear convergence rate} for the ADMM sequence. This ensures that the required number of inner iterations to reach a target precision $\epsilon$ scales only logarithmically, i.e., $r_{\max} \sim \mathcal{O}(\log(1/\epsilon))$. 

In conclusion, while the Koopman lifting expands the state dimension by a factor of $N/n$, the algorithm maintains a strictly polynomial complexity $\mathcal{O}(N^3 p^3)$. This is a highly favorable trade-off compared to non-parametric particle filtering methods (e.g., PF-ADMM), which suffer from exponential computational growth $\mathcal{O}(e^{pn})$ due to particle impoverishment in high-dimensional state spaces. The proposed framework effectively circumvents the curse of dimensionality, rendering it computationally viable for medium- to large-scale nonlinear networked systems.

%% ===============================================================
\section{Theoretical Analysis}
\label{sec:theory}
%% ===============================================================
This section establishes the theoretical foundations of the
Koopman-GKFA framework through four main results, organized in a
deliberate logical progression.
We first prove that the ADMM topology solver enjoys global linear
convergence (Theorem~\ref{thm:admm_convergence}), which underpins the
computational tractability of the inner optimization loop.
We then show that the Koopman lifting faithfully preserves the spectral
structure of the original nonlinear dynamics
(Proposition~\ref{prop:spectral}), a property that justifies applying a
linear Kalman filter in the lifted space.
Building on these two results, we derive a closed-form three-term
mean-squared error (MSE) decomposition
(Theorem~\ref{thm:error_bound}) that disentangles the Koopman
approximation error, the unavoidable stochastic noise floor, and the
residual topology uncertainty.
Finally, two corollaries (Corollaries~\ref{cor:consistency}
and~\ref{cor:sparsity}) establish monotone consistency of the state
estimator and exact edge-support recovery as the dictionary dimension
and the observation horizon grow.
\subsection{Convergence of the Group-Sparse ADMM}
\label{subsec:admm_theory}
The topology inference subproblem~\eqref{eq:topology_subprob} is a
composite convex program whose smooth term is strongly convex by virtue
of the positive-definite lifted information matrix
$\bJ_k$ (Lemma~\ref{lem:Jk_pd}).
This structure allows us to prove that the
ADMM iterations~\eqref{eq:T_update}--\eqref{eq:G_update} converge
\emph{globally} at a \emph{linear} rate, a qualitatively stronger
guarantee than the $\mathcal{O}(1/\sqrt{r})$ sublinear rate of
first-order subgradient alternatives.
\begin{theorem}[Linear Convergence of Group-Sparse ADMM]
\label{thm:admm_convergence}
Let Assumption~\ref{ass:dictionary} hold and let $\hat{\bPsi}_k$ be
fixed. For any $\beta_k>0$, the ADMM
iterations~\eqref{eq:T_update}--\eqref{eq:G_update} converge to the
unique global minimizer $\hat{\bT}^\phi_k$ of
subproblem~\eqref{eq:topology_subprob} at a linear rate:
\begin{equation}
  \normF{\hat{\bT}^{\phi,(r)}_k - \hat{\bT}^\phi_k}
  \leq \rho_k^r \normF{\hat{\bT}^{\phi,(0)}_k - \hat{\bT}^\phi_k},
  \label{eq:linear_rate}
\end{equation}
where the convergence factor satisfies:
\begin{equation}
  \rho_k = \sqrt{1 - \frac{2\mu_k\beta_k}{(\lambda_k^{\max})^2
    + 2\mu_k\beta_k}} \;\in\; (0,1),
  \label{eq:rho}
\end{equation}
with $\mu_k=\lambda_{\min}(\bJ_k)>0$ and
$\lambda_k^{\max}=\lambda_{\max}(\bJ_k)$.
\end{theorem}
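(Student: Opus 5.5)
The plan is to treat \eqref{eq:topology_subprob} as a two-block composite program $\min f(\bT^\phi)+g(\bC)$ subject to $\bT^\phi=\bC$ and the affine constraint $\bT^\phi(\bone_p\otimes\bI_N)=\bone_p\otimes\bGamma^\phi$. Here $f$ is the forgetting-weighted quadratic fidelity and $g$ is the block group norm. The argument then follows the standard linear-convergence analysis for ADMM with a strongly convex, smooth first block (in the spirit of \cite{hong2017linear}, or Deng--Yin type estimates).

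\textbf{Step 1: conditioning of the $\bT^\phi$-subproblem.} The $\bT^\phi$-subproblem (the quadratic part of $\mathcal{L}_k$ in $\bT^\phi$) has Hessian $\bXi_k=\bJ_k^\top\otimes\bI_{Np}$. By Lemma~\ref{lem:Jk_pd}, its spectrum lies in $[\mu_k,\lambda_k^{\max}]$ with $\mu_k>0$. Thus the $\bT^\phi$-step is the exact minimizer of a $\mu_k$-strongly convex, $\lambda_k^{\max}$-smooth function, and \eqref{eq:T_update} is its unique solution. The same fact gives existence and uniqueness of the global minimizer $\hat{\bT}^\phi_k$ of \eqref{eq:topology_subprob}: the objective is strictly convex plus a convex penalty, with a nonempty affine feasible set. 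A KKT point $(\hat{\bT}^\phi_k,\hat{\bC},\bD^\star,\bG^\star)$ then exists because the constraints are affine (Slater holds trivially).

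\textbf{Step 2: one-step contraction.} Write the optimality conditions of each ADMM step and subtract the KKT conditions. From the $\bT^\phi$-step, strong monotonicity of $\nabla f$ combined with the multiplier updates \eqref{eq:D_update}--\eqref{eq:G_update} gives an inequality of the form
\begin{equation*}
\mu_k\normF{\bT^{(r)}-\hat{\bT}}^2 \le \inner{\bD^\star-\bD^{(r)}}{\bT^{(r)}-\hat{\bT}} + (\text{analogous } \bG\text{ term}).
\end{equation*}
From the $\bC$-step, monotonicity of $\partial g$ (firm nonexpansiveness of the block soft-thresholding \eqref{eq:C_update}) gives the companion inequality. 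Summing these yields the usual Lyapunov decrease
\begin{equation*}
V^{(r)}\le V^{(r-1)}-\bigl(2\mu_k\normF{\bT^{(r)}-\hat{\bT}}^2+\ldots\bigr),
\end{equation*}
where
\begin{equation*}
V^{(r)}=\beta_k\normF{\bC^{(r)}-\hat{\bC}}^2+\beta_k^{-1}\bigl(\normF{\bD^{(r)}-\bD^\star}^2+\normF{\bG^{(r)}-\bG^\star}^2\bigr).
\end{equation*}

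\textbf{Step 3: closing the recursion.} To get a linear rate, the multiplier error must be bounded by the primal error. Using the $\bT^\phi$ optimality condition and the $\lambda_k^{\max}$-smoothness of $f$, the dual error is controlled as $\normF{\bD^{(r)}-\bD^\star}\lesssim\lambda_k^{\max}\normF{\bT^{(r)}-\hat{\bT}}+\beta_k\normF{\bC^{(r)}-\bC^{(r-1)}}$. Substituting this gives
\begin{equation*}
V^{(r)}\le \Bigl(1-\frac{2\mu_k\beta_k}{(\lambda_k^{\max})^2+2\mu_k\beta_k}\Bigr)V^{(r-1)},
\end{equation*}
i.e.\ $V^{(r)}\le\rho_k^2V^{(r-1)}$. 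Finally, transfer the contraction to the primal iterate by bounding $\normF{\bT^{(r)}-\hat{\bT}}$ by a multiple of $\sqrt{V^{(r)}}$. Taking square roots then gives \eqref{eq:linear_rate}, with $\rho_k\in(0,1)$ because $\mu_k,\beta_k>0$.

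\textbf{Main obstacle.} Step 3 is where I expect the difficulty, for two reasons. First, the dual error must be bounded in terms of primal quantities despite the second, affine multiplier $\bG_k$; I would try to handle this by stacking both constraints into a single linear map with full row rank. Second, the contraction is naturally obtained for $V^{(r)}$, not for $\normF{\bT^{(r)}-\hat{\bT}}$ alone. Stating the final bound with constant $1$ in terms of $\normF{\hat{\bT}^{(0)}-\hat{\bT}}$ likely requires exploiting the initialization $\bC^{(0)}=\bD^{(0)}=\bG^{(0)}=\bO$, or else absorbing a condition-number constant into the rate. I would first try to establish the bound in the $V$-norm and then check whether the constant can be removed.
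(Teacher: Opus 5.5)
Your overall route matches the paper's: a Hong--Luo type Lyapunov contraction for two-block ADMM with a strongly convex, smooth first block, then transfer to the primal iterate. However, Steps~1--2 take strong convexity from the wrong object, and this is a genuine gap.

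\textbf{Source of strong convexity.} The positive definiteness in Lemma~\ref{lem:Jk_pd} comes from $\bN_k=\beta_k[\bI_{Np}+(\bone_p\bone_p^\top)\otimes\bI_N]$, i.e.\ from the augmented penalty terms. These vanish on $\{\bT^\phi=\bC_k,\ \bT^\phi(\bone_p\otimes\bI_N)=\bone_p\otimes\bGamma^\phi\}$, so they say nothing about the objective of \eqref{eq:topology_subprob}. Its smooth part $f$ has Hessian $\frac{2}{\tilde k_\gamma}(\bL_k\otimes\bI_{Np})$, which is only positive semidefinite under Assumption~\ref{ass:dictionary}. Indeed $\mathrm{rank}(\bL_k)\le 2\tilde k$, so for short windows $f$ has flat directions even inside the feasible affine set, while $\bJ_k\succeq\beta_k\bI$ always. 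Two consequences follow. Uniqueness of $\hat{\bT}^\phi_k$ does not follow from Step~1. And taking $\mu_k=\lambda_{\min}(\bJ_k)\ge\beta_k$ as the strong-monotonicity modulus of $\nabla f$ in Step~2 double-counts the $\beta_k$-terms that your Lyapunov bookkeeping already carries.

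The argument therefore proves too much. If $\bL_k=\bO$, the subproblem is to minimize $\sum_{i,j}\normF{[\bT^\phi]_{(i,j)}}$ subject to $\sum_j[\bT^\phi]_{(i,j)}=\bGamma^\phi$ for each $i$. This has the continuum of minimizers $[\bT^\phi]_{(i,j)}=c_{ij}\bGamma^\phi$ with $c_{ij}\ge 0$ and $\sum_j c_{ij}=1$. Yet your Steps~1--3 would certify a unique limit reached at a linear rate.

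The missing ingredient is persistent excitation, Assumption~\ref{ass:stability}(B3). The paper's proof invokes $\lambda_{\min}(\bL_k)\ge\mu_L$ to make $f$ itself strongly convex. It then runs the contraction with $\mu_k\triangleq\frac{2}{\tilde k_\gamma}\lambda_{\min}(\bSigma_k)$, the modulus of $f$, not $\lambda_{\min}(\bJ_k)$. The statement's definition of $\mu_k$ invites your reading, but no valid argument can use it.

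\textbf{Step~3.} Both of your worries are well founded, and neither is resolved by the fixes you suggest.

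First, the multiplier $\bG_k$. Stacking the constraints gives the map $\bT^\phi\mapsto(\bT^\phi,\bT^\phi(\bone_p\otimes\bI_N))$. This map is injective but not surjective, since its codomain has $pN^2$ more dimensions than its domain. So it has full column rank, not full row rank. Its adjoint $(\bD,\bG)\mapsto\bD+\bG(\bone_p^\top\otimes\bI_N)$ has a nontrivial kernel. Hence $\bT^\phi$-optimality controls only $(\bD^{(r)}-\bD^\star)+(\bG^{(r)}-\bG^\star)(\bone_p^\top\otimes\bI_N)$, not the two multiplier errors in your $V^{(r)}$ separately. The paper does not resolve this either: its proof writes the augmented Lagrangian with a single multiplier for $\bT^\phi=\bC_k$ and never analyzes $\bG_k$.

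Second, the constant-$1$ bound. No argument can give it, because $\hat{\bT}^{\phi,(0)}_k$ never enters the recursion: \eqref{eq:T_update} uses only $\bC_k^{(r-1)},\bD_k^{(r-1)},\bG_k^{(r-1)}$. Choosing $\hat{\bT}^{\phi,(0)}_k=\hat{\bT}^\phi_k$ makes the right-hand side of \eqref{eq:linear_rate} zero. Meanwhile $\hat{\bT}^{\phi,(1)}_k=\bS_k^{(0)}\bV_k\neq\hat{\bT}^\phi_k$ in general, and the zero initialization of $\bC_k,\bD_k,\bG_k$ does not change this. The most a repaired version of your argument can deliver is $\normF{\hat{\bT}^{\phi,(r)}_k-\hat{\bT}^\phi_k}\le c\,\rho^r\sqrt{V^{(0)}}$, with $\rho$ built from the strong-convexity modulus of $f$. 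The paper reaches the displayed form only by absorbing the dual initialization error into the constant, or by assuming the dual starts at its optimum. You should state the result in the $V^{(0)}$ form rather than try to remove the constant.
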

\begin{proof}
    Please see Appendix \ref{app_thm2} for further details.
\end{proof}

Several consequences of Theorem~\ref{thm:admm_convergence} are worth
highlighting.
First, convergence is guaranteed for \emph{any} penalty parameter
$\beta_k > 0$; the role of $\beta_k$ is solely to modulate the
contraction rate $\rho_k$ rather than to ensure convergence itself,
in stark contrast to sub-gradient methods that require careful
per-problem step-size tuning~\cite{ramezani2018joint}.
Setting $\beta_k = \beta_k^{\text{opt}} \propto
\sqrt{\lambda_k^{\max}/\mu_k}\cdot\mu_k$ minimizes $\rho_k$ and
yields the tightest linear rate.
Second, the number of inner iterations required to achieve a target
precision $\epsilon$ scales only as $\mathcal{O}(\log(1/\epsilon))$,
making the approach well-suited to the online setting where a strict
per-step budget must be respected.
Third, the explicit dependence of $\rho_k$ on the conditioning of
$\bJ_k$ provides actionable guidance for dictionary design: a
well-conditioned lifted information matrix simultaneously accelerates
topology recovery and tightens the spectral bound established in the
next subsection.

\begin{remark}[Comparison with Subgradient Solvers]
Unlike sub-gradient methods that require careful per-problem step-size
selection~\cite{ramezani2018joint}, the $\bT^\phi$-update
in~\eqref{eq:T_update} is a single closed-form matrix equation with
\emph{guaranteed} convergence for any $\beta_k>0$.
The penalty $\beta_k$ controls only the convergence \emph{rate} (via
$\rho_k$), not convergence itself, and setting
$\beta_k=\beta_k^{\text{opt}}$ yields the tightest linear rate.
\end{remark}

\subsection{Spectral Preservation of the Lifted System}
\label{subsec:spectral}

Theorem~\ref{thm:admm_convergence} guarantees that the topology solver
recovers the optimal lifted operator $\hat{\bT}^\phi_k$.
A prerequisite for applying a Kalman filter to the resulting linear
system is that the spectral properties of this finite-dimensional
surrogate faithfully reflect those of the underlying nonlinear dynamics.
The following proposition formalizes this relationship: the eigenvalues
of the estimated lifted matrix $\bL + \hat{\bT}^\phi$ closely
approximate the true Koopman eigenvalues of the original graphical
dynamical system, with the approximation error controlled entirely by
the dictionary richness.

% An additional theoretical benefit of the Koopman framework is the
% preservation of qualitative dynamical properties through the lifting, meaning that the eigenvalues of the finite-dimensional linear surrogate $\bL+\bT^\phi$ closely track the true spectrum of the underlying nonlinear system.

\begin{proposition}[Spectral Approximation]
\label{prop:spectral}
Let $\mu_j$ be an eigenvalue of the global estimated lifted matrix
$\bL+\bT^\phi\in\mathbb{R}^{Np\times Np}$. Under Assumptions \ref{ass:dictionary}, $\mu_j$ approximates a true discrete Koopman eigenvalue $\mu_j^{\text{true}}$ of the original nonlinear GDS \eqref{eq:state}--\eqref{eq:meas}, with the approximation error bounded by the network-level truncation quality $\varepsilon_{\text{net}}$:
\begin{equation}
  |\mu_j^{\text{true}} - \mu_j| \leq
  \kappa_2(\bV)\,c_E\,\varepsilon_{\text{net}},
  \label{eq:spectral_bound_rep}
\end{equation}
where $\kappa_2(\bV) = \normtwo{\bV}\normtwo{\bV^{-1}}$ is the spectral condition number of the eigenvector matrix $\bV$ of $\bL+\bT^\phi$, and $c_E > 0$ is a geometric constant capturing the dictionary scaling over the compact domain $\mathcal{X}^p$.
\end{proposition}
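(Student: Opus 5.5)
The plan is to read the statement as a Bauer--Fike perturbation result. The finite matrix $\bM\triangleq\bL+\bT^\phi$ is a perturbation of the compression of the true Koopman operator $\calK$ of the coupled network onto the span of the separable dictionary $\bPhi$. The perturbation is exactly the network-level residual controlled by Assumption~\ref{ass:dictionary}(A5).

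\emph{Step 1: stack the residual.} Stacking (A5) over the $p$ nodes gives, for every $x\in\mathcal{X}^p$,
\begin{equation*}
\bPhi(F(x)) = \bM\,\bPhi(x) + \mathbf{r}(x), \qquad \sup_{x}\normtwo{\mathbf{r}(x)}\leq\sqrt{p}\,\varepsilon_{\text{net}},
\end{equation*}
where $F$ is the deterministic network map $F(x)_i=f_i(x_i)+\sum_j a_{ij}g(x_j)$. The block form $\bL+\bA\otimes\bGamma^\phi$ follows directly from the definitions in Section~\ref{subsec:compact}.

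\emph{Step 2: express the residual as an operator perturbation.} Read this identity componentwise in $\mathcal{F}=L^2(\mathcal{X}^p)$ restricted to the finite subspace $\mathcal{F}_N=\mathrm{span}\{\bPhi\}$. Let $P_N$ be the $L^2$-orthogonal projection onto $\mathcal{F}_N$, and let $\bG=\int\bPhi\bPhi^\top$ be the Gram matrix, assumed invertible (linearly independent dictionary). Then the matrix representing $P_N\calK|_{\mathcal{F}_N}$ in the basis $\bPhi$ is $\bM+\bE$, with $\bE=\bigl(\int\mathbf{r}\,\bPhi^\top\bigr)\bG^{-1}$ (up to transpose convention). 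Cauchy--Schwarz on the compact domain gives
\begin{equation*}
\normtwo{\bE}\leq \mathrm{vol}(\mathcal{X}^p)^{1/2}\,\sup\normtwo{\bPhi}\,\normtwo{\bG^{-1}}\sqrt{p}\,\varepsilon_{\text{net}} \triangleq c_E\,\varepsilon_{\text{net}}.
\end{equation*}
This is exactly the ``geometric constant capturing dictionary scaling'' in the statement.

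\emph{Step 3: apply Bauer--Fike.} Assume $\bM$ is diagonalizable as $\bM=\bV\bLambda\bV^{-1}$. Bauer--Fike applied to $\bM+\bE$ gives, for each eigenvalue $\tilde\mu$ of $\bM+\bE$, some $\mu_j$ with $|\tilde\mu-\mu_j|\leq\kappa_2(\bV)\normtwo{\bE}$. Pairing eigenvalues in the direction stated in \eqref{eq:spectral_bound_rep} needs the usual continuity/counting argument. Homotoping $\bE\to t\bE$ shows that each union of Bauer--Fike disks contains equally many eigenvalues of $\bM$ and of $\bM+\bE$, so every $\mu_j$ has a partner within $\kappa_2(\bV)c_E\varepsilon_{\text{net}}$.

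\emph{Step 4: identify the partner with a true Koopman eigenvalue.} This is the main obstacle. In general the eigenvalues of the compression $P_N\calK|_{\mathcal{F}_N}$ are Ritz values, not eigenvalues of $\calK$. I would first handle the invariant case, where $\mathcal{F}_N$ is $\calK$-invariant up to the residual. There, a Ritz pair $(\tilde\mu,\tilde\varphi)$ satisfies $\|\calK\tilde\varphi-\tilde\mu\tilde\varphi\|\leq\|(I-P_N)\calK\tilde\varphi\|$, and the latter is again bounded by the residual $\mathbf{r}$. Hence $\tilde\mu$ lies within $O(\varepsilon_{\text{net}})$ of the (approximate point) spectrum of $\calK$. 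The constant can be absorbed into $c_E$ after normalizing the eigenfunction through $\bG$. If $\calK$ is not normal, this only yields a pseudospectral statement. In that case I would state the result with $\mu_j^{\text{true}}$ understood as an element of the $c_E\varepsilon_{\text{net}}$-pseudospectrum, which is exact when $\varepsilon_{\text{net}}=0$, and invoke the convergence of EDMD spectra~\cite{korda2018convergence} for the $N\to\infty$ identification.
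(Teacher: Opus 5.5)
Your Steps 1--3 follow the paper's route. The paper also forms the projected Koopman matrix $\bK^*$ with $\bK^*\bPhi(x)=\mathcal{P}_N[\bPhi(F(x))]$, bounds $\bE=\bK^*-(\bL+\bT^\phi)$ through the Gram matrix, and applies Bauer--Fike with $\bL+\bT^\phi$ as the diagonalizable base.

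Your Step 2 is cleaner than the paper's. The identity $\mathcal{P}_N[\bPhi\circ F]=\bM\bPhi+\mathcal{P}_N\mathbf{r}$ gives $\bE$ exactly, which makes the paper's unexplained pointwise bound $c_0\varepsilon_{\text{net}}$ on the ``projection residual'' unnecessary. Your constant is slightly off, though. Integrating $\normtwo{\bE\bPhi}^2=\normtwo{\mathcal{P}_N\mathbf{r}}^2$ against $\bG$ gives $\normtwo{\bE}\le\sqrt{p\,\mathrm{vol}(\mathcal{X}^p)/\lambda_{\min}(\bG)}\,\varepsilon_{\text{net}}$; this is harmless, since any finite $c_E$ will do.

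The step that fails is the last sentence of Step 3. The homotopy $\bM+t\bE$ only shows that the connected component of $\bigcup_i D(\mu_i,\kappa_2(\bV)\normtwo{\bE})$ containing $\mu_j$ also contains an eigenvalue of $\bM+\bE$. That component can be a chain of up to $Np$ overlapping disks, so the argument only bounds the distance to the partner by $(2Np-1)\kappa_2(\bV)\normtwo{\bE}$.

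The reverse-direction bound with constant $\kappa_2(\bV)$ is in fact false. Take $\bM=\mathrm{diag}(0,1)$, so $\kappa_2(\bV)=1$, and $\bE=\tfrac12\begin{pmatrix}1&1\\-1&1\end{pmatrix}$. Then $\normtwo{\bE}=1/\sqrt2$, but $\bM+\bE$ has characteristic polynomial $(\lambda-1)^2$. So the eigenvalue $0$ of $\bM$ is at distance $1>1/\sqrt2$ from the spectrum of $\bM+\bE$.

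You need one of two repairs:
\begin{itemize}
\item carry the factor $2Np-1$ and absorb it into $c_E$; or
\item assume simple $\mu_i$ and the separation condition $2\kappa_2(\bV)\normtwo{\bE}<\min_{i\neq j}|\mu_i-\mu_j|$, so that each disk is its own component and contains exactly one eigenvalue of $\bM+\bE$.
\end{itemize}
The paper does neither. It only states the forward Bauer--Fike direction (each eigenvalue of $\bK^*$ lies near some $\mu_j$). You were right that the statement's direction needs an extra argument, but the one you give overclaims the constant.

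Step 4 has no counterpart in the paper, which simply declares that the eigenvalues of $\bK^*$ ``represent the true Koopman eigenvalues restricted to $\mathcal{F}_N$'' by construction. Your objection is correct: these are Ritz values. The residual argument only places them in an $O(\varepsilon_{\text{net}})$-pseudospectrum of $\calK$. Even for normal $\calK$ it places them only near $\sigma(\calK)$, which may be continuous spectrum, not near an eigenvalue. So, as written, your proposal proves the proposition either under the paper's definitional reading of $\mu_j^{\text{true}}$ or in a weaker pseudospectral form, not literally as stated. That limitation lies in the statement itself; the paper conceals it rather than resolving it.
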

\begin{proof}
    Please see Appendix \ref{app_prop1} for further details.
\end{proof}

Proposition~\ref{prop:spectral} has an important corollary for
filter design: if the true GDS is stable (i.e., all $|\mu_j^{\text{true}}|<1$),
then for any dictionary rich enough that
$\kappa_2(\bV)\,c_E\,\varepsilon_{\text{net}} < 1 -
\max_j|\mu_j^{\text{true}}|$, the lifted system $\bL+\bT^\phi$ is
also stable, which is a necessary condition for the Kalman filter
gain to remain bounded and for the covariance recursion to converge.
More broadly, the bound~\eqref{eq:spectral_bound_rep} implies that
synchronization conditions and qualitative bifurcation structure of the
original nonlinear network are preserved in the lifted space to within
the approximation accuracy of the dictionary.
This provides a rigorous justification for all subsequent Kalman filter
operations on the surrogate linear system.

\subsection{Mean-Squared Error Bound}
\label{subsec:mse_bound}
Having established the convergence of the inner optimization loop and
the spectral fidelity of the lifted model, we now analyse the
end-to-end estimation accuracy of the full Koopman-GKFA algorithm.
We adopt the following standard stability and observability conditions.
\begin{assumption}[System Stability and Observability]
\label{ass:stability}
The graphical dynamical system satisfies:
\begin{enumerate}
  % \item[(B1)] \textit{Stability:} $\rho_s \triangleq
  % \|\bL+\bT^\phi\|_2 < 1$.
  \item[(B1)] \textit{Stability and Boundedness:} The system is strictly stable with $\rho_s \triangleq \|\bL+\bT^\phi\|_2 < 1$. Furthermore, the physical system states are deterministically bounded within the operating region, ensuring $\sup_{k} \|\bPsi_{k}\|_2^2 \leq M_\Psi < \infty$.
  \item[(B2)] \textit{Observability:} The pair $(\bL+\bT^\phi,
  \bH^\phi)$ is uniformly observable with Gramian lower-bounded by
  $\mu_o\bI_{Np}$ over any window of length $T_0$.
  % \item[(B3)] \textit{Excitation:} The time-averaged regressor
  % satisfies $\lambda_{\min}(\bL_k)\geq\mu_L>0$ for all $k\geq k_0$.
  \item[(B3)] \textit{Excitation and Identifiability:} The time-averaged regressor satisfies the persistent excitation condition $\lambda_{\min}(\bL_k)\geq\mu_L>0$ for all $k\geq k_0$. Furthermore, for exact topology recovery (Section~\ref{subsec:mse_bound}), the lifted regressors satisfy the Group-Lasso Irrepresentability Condition (or Mutual Incoherence Condition)\cite{zhao2006model},\cite{bach2008consistency} associated with the true edge support of $\bA$.
\end{enumerate}
\end{assumption}

Under these conditions, the following theorem decomposes the total
estimation error into three interpretable components, each governed
by a distinct physical mechanism.

\begin{theorem}[Three-Term Error Decomposition]
\label{thm:error_bound}
Under Assumptions~\ref{ass:dictionary} and~\ref{ass:stability},
suppose the ADMM inner loop converges to its global minimizer at
each time step. Then the mean-squared lifted estimation error satisfies,
for all $k\geq k_0$:
\begin{align}
  &\expect\bigl[\|{\bpsi_{i,k}-\hat{\bpsi}_{i,k}}\|^2\bigr]
  \;\leq\;\nonumber\\
  &\underbrace{\frac{c_1}{1-\rho_s^2}\bigl(\varepsilon_{\text{net}}^2\bigr)}_{
    \mathcal{E}_\calK\text{: Koopman error}}
  +\underbrace{\frac{c_2L_\nabla^2\sigma_w^2+ c_3\sigma_v^2}{1-\rho_s^2}}_{
    \mathcal{E}_\calN\text{: noise error}}+\underbrace{\frac{c_4}{\mu_L^2}
    \normF{\tilde{\bA}_{k-1}}^2}_{\mathcal{E}_\calT
    \text{: topology residual}},
  \label{eq:error_bound}
\end{align}
where $\sigma_w^2=\max_{i,k}\tr(\bQ_{i,k})$,
$\sigma_v^2=\max_{i,k}\tr(\bR_{i,k})$, $\tilde{\bA}_{k-1}=\bA-\hat{\bA}_{k-1}$ and
$c_1,c_2,c_3,c_4>0$ are system-dependent constants.

The back-projected state estimation error satisfies:
\begin{equation}
  \expect\bigl[\|{x_{i,k}-\hat{x}_{i,k}}\|^2\bigr]
  \leq
  \|\bPi\|_2^2\,\bigl(
    \mathcal{E}_\calK + \mathcal{E}_\calN + \mathcal{E}_\calT
  \bigr).
  \label{eq:backproj_error}
\end{equation}
\end{theorem}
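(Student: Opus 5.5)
The plan is to write the lifted estimation error $e_k \triangleq \bPsi_k - \hat{\bPsi}_k$ as a linear recursion driven by three separate inputs, and then bound each contribution. Subtract the Kalman update \eqref{eq:kf_pred_state}--\eqref{eq:kf_update_state} from the true lifted dynamics \eqref{eq:global_lifted_state}--\eqref{eq:global_lifted_meas}, where $\bT^\phi=\bA\otimes\bGamma^\phi$. This gives
\begin{align}
  e_k &= (\bI_{Np}-\bK_k\bH^\phi)\bigl[(\bL+\hat{\bT}^\phi_{k-1})e_{k-1}
        + (\tilde{\bA}_{k-1}\otimes\bGamma^\phi)\bPsi_{k-1}
        + \bar{w}_{k-1}\bigr] - \bK_k v_k ,
  \nonumber
\end{align}
using $\bT^\phi-\hat{\bT}^\phi_{k-1}=\tilde{\bA}_{k-1}\otimes\bGamma^\phi$. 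I would split $e_k=e_k^{\calK}+e_k^{\calN}+e_k^{\calT}$ by linearity. Here $e^{\calK}$ is driven by the deterministic Koopman part of $\bar{w}$, $e^{\calN}$ by the stochastic lifted process noise and by $v_k$, and $e^{\calT}$ by the topology mismatch term. The elementary inequality $\|a+b+c\|^2\le 3(\|a\|^2+\|b\|^2+\|c\|^2)$ then produces the three-term sum, with the factor $3$ absorbed into the constants $c_i$.

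For the first two terms, I would use (B1)--(B2) to argue that the closed-loop matrix $\bPhi_k\triangleq(\bI-\bK_k\bH^\phi)(\bL+\hat{\bT}^\phi_{k-1})$ is contractive, with spectral norm at most a constant times $\rho_s$. Uniform observability gives a uniformly bounded Riccati covariance, and therefore bounded gains $\|\bK_k\|_2\le \bar K$ (the standard Jazwinski/Anderson--Moore argument). I would treat the perturbation $\hat{\bT}^\phi_{k-1}$ versus $\bT^\phi$ as being folded into the topology term. Unrolling the recursion, each input is multiplied by products of $\bPhi$ factors, so the second moments sum geometrically to at most $1/(1-\rho_s^2)$ times the input variance. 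Theorem~\ref{thm:lifting} supplies $2\varepsilon_{\text{net}}^2$ for the Koopman input and $2L_\nabla^2\tr(\bQ_{i,k})\le 2L_\nabla^2\sigma_w^2$ for the noise input. The measurement input contributes $\bar K^2\sigma_v^2$ because $v_k$ is white and independent of the other inputs. This yields $\mathcal{E}_\calK$ and $\mathcal{E}_\calN$, with $c_1,c_2,c_3$ depending on $\bar K$, $\|\bH^\phi\|$ and the per-block dimensions. The per-node bound follows from $\|e_{i,k}\|\le\|e_k\|$.

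For the topology residual, the forcing term is bounded by $\|\bI-\bK_k\bH^\phi\|_2\,\|\bGamma^\phi\|_2\,\normF{\tilde{\bA}_{k-1}}\,M_\Psi^{1/2}$ via (B1). The factor $1/\mu_L^2$ is meant to come from the ADMM subproblem. By Theorem~\ref{thm:admm_convergence} the iterate equals the exact minimizer of \eqref{eq:topology_subprob}, and its quadratic part has Hessian $\bJ_k\succeq \frac{2}{\tilde k_\gamma}\bL_k$ with $\lambda_{\min}\ge \mu_L$ by (B3). A perturbation argument on the normal equations $\hat{\bT}^\phi_k\bJ_k=\bS_k$ shows that errors in the regressors propagate into $\tilde{\bA}$ amplified by $\|\bJ_k^{-1}\|\le 1/\mu_L$. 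Squaring gives the $c_4/\mu_L^2$ scaling, and the geometric factor is absorbed into $c_4$. Finally, \eqref{eq:backproj_error} follows immediately from $x_{i,k}-\hat{x}_{i,k}=\bPi(\bpsi_{i,k}-\hat{\bpsi}_{i,k})$, which is exact by (A4), together with $\|\bPi v\|\le\|\bPi\|_2\|v\|$.

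The main obstacle is the contraction step for $\bPhi_k$. Stability of $\bL+\bT^\phi$ does not automatically control the product $(\bI-\bK_k\bH^\phi)(\bL+\hat{\bT}^\phi_{k-1})$. I would first try to bound the norm of $\bI-\bK_k\bH^\phi$ in the $\tilde{\bP}_k^{-1}$-weighted norm, where it is nonexpansive. Otherwise I would accept a bounded factor $\bar c\ge1$ and either require $\bar c\rho_s<1$ or interpret $\rho_s$ as the closed-loop rate. A secondary difficulty is that $\tilde{\bA}_{k-1}$ is random and correlated with $e_{k-1}$. I would handle this by treating $\normF{\tilde{\bA}_{k-1}}$ as a worst case over the window, which is consistent with it appearing outside the expectation in \eqref{eq:error_bound}.
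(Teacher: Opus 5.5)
Your route is genuinely different from the paper's, and the step you flag as the main obstacle is exactly the one the paper's route is built to bypass. The paper never propagates the pathwise error through the gain. Instead it:
\begin{itemize}
\item takes second moments of the prediction error $(\bL+\hat{\bT}^\phi_{k-1})\tilde{\bPsi}_{k-1}+\tilde{\bT}^\phi_{k-1}\bPsi_{k-1}+\bar{w}_{k-1}$;
\item invokes $\tr(\tilde{\bP}_{k|k})\le\tr(\tilde{\bP}_{k|k-1})$ for the measurement update;
\item combines $\tr(\bM\bP\bM^\top)\le\|\bM\|_2^2\tr(\bP)$ with $\|\bL+\hat{\bT}^\phi_{k-1}\|_2\le\rho_s+\epsilon_\rho$ to get the scalar recursion $\tr(\tilde{\bP}_{k|k})\le\rho_s^2\tr(\tilde{\bP}_{k-1|k-1})+\delta$.
\end{itemize}
So the open-loop $\rho_s$ appears directly, and $\bI-\bK_k\bH^\phi$ never has to be bounded.

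The price is that the paper treats $\tilde{\bP}$ both as the true error covariance and as the covariance used to compute the gain. The topology mismatch and the deterministic residual are absent from the filter's $\bar{\bQ}_{k-1}$, so the gain is suboptimal for the true covariance, and a suboptimal gain can increase the trace.

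Your pathwise recursion is sounder on this point, for three reasons:
\begin{itemize}
\item $\bI-\bK_k\bH^\phi=\tilde{\bP}_k\tilde{\bP}_{k|k-1}^{-1}$ is nonexpansive in the filter's own weighted norm, whatever the mismatch.
\item Your $3(\cdot)$ split legitimately controls the cross term between $(\bL+\hat{\bT}^\phi_{k-1})\tilde{\bPsi}_{k-1}$ and $\tilde{\bT}^\phi_{k-1}\bPsi_{k-1}$, which the paper's covariance recursion silently drops.
\item $\sigma_v^2$ enters naturally through $-\bK_kv_k$; the paper instead places it inside $\tr(\bar{\bQ}_{k-1})$.
\end{itemize}

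To close the argument, however, you need two-sided uniform bounds $\underline{p}\bI\preceq\tilde{\bP}\preceq\bar{p}\bI$. That requires uniform observability and boundedness of the pair the filter actually uses, $(\bL+\hat{\bT}^\phi_{k-1},\bH^\phi)$, rather than (B2)'s $(\bL+\bT^\phi,\bH^\phi)$, plus $\bar{\bQ}_k\succeq\underline{q}\bI$. The contraction factor $\theta$ you then obtain is a closed-loop rate unrelated to $\rho_s$. It fits the stated form only by absorbing a factor depending on $\theta$ and $\rho_s$ into the constants.

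Folding $\hat{\bT}^\phi_{k-1}-\bT^\phi$ into the forcing does not recover $\rho_s$ either. You have $(\bL+\hat{\bT}^\phi_{k-1})e_{k-1}+\tilde{\bT}^\phi_{k-1}\bPsi_{k-1}=(\bL+\bT^\phi)e_{k-1}+\tilde{\bT}^\phi_{k-1}\hat{\bPsi}_{k-1}$, and $\hat{\bPsi}_{k-1}$ is not covered by (B1). So a bilinear term $\tilde{\bT}^\phi_{k-1}e_{k-1}$ reappears and forces a smallness condition on $\tilde{\bA}$, which is the same condition the paper hides in ``$\epsilon_\rho$ arbitrarily small.'' Moreover, $\bI-\bK_k\bH^\phi$ still multiplies everything.

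Three further steps need repair.

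\textbf{Geometric summation.} Only inputs that are zero-mean and independent of everything multiplying them sum to $(1-\rho^2)^{-1}$ times their variance. None of your inputs qualify:
\begin{itemize}
\item $\boldsymbol{\delta}_{i,k}$ is deterministic;
\item the topology forcing is correlated in time;
\item $\boldsymbol{\xi}_{i,k}$ is not conditionally zero-mean, because $\bpsi_i$ is nonlinear;
\item your transition matrices depend on past noise through $\hat{\bA}_{k-1}$.
\end{itemize}
So the cross terms in time do not vanish. Accumulate step by step instead, using $\|a+b\|^2\le(1+\epsilon)\|a\|^2+(1+\epsilon^{-1})\|b\|^2$ with $(1+\epsilon)\theta^2<1$. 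This yields constants of order $(1-\theta)^{-2}$, which is acceptable only because the $c_i$ are system-dependent. The paper sidesteps this by asserting $\expect[\bar{w}_{k-1}]=\bzero$, which is false for the truncation part.

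\textbf{Transient and history.} Your superposition omits the free response of $e_{k_0}$, and the forcing at earlier times carries $\tilde{\bA}_{j}$ for all $j\le k-1$, not only $j=k-1$. You need $\sup_{j\le k-1}\normF{\tilde{\bA}_j}$, as you anticipate. You must also either absorb the transient into the constants, as the paper does, or state the bound only asymptotically.

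\textbf{The $\mu_L^{-2}$ factor.} Your ADMM perturbation argument does not give the form you want. A bound like $\normF{\tilde{\bA}_{k-1}}\lesssim\mu_L^{-1}\cdot(\text{regressor error})$ would \emph{replace} $\normF{\tilde{\bA}_{k-1}}^2$, not multiply it. Since $\mu_L$ is a fixed constant, the factor is cosmetic: from $c\,M_\Psi\normF{\tilde{\bA}_{k-1}}^2$ simply set $c_4=c\,M_\Psi\mu_L^2$. This is effectively what the paper does, so drop that step. Note also that $\lambda_{\min}(\tfrac{2}{\tilde k_\gamma}\bL_k)\ge 2\mu_L/\tilde k_\gamma$, not $\mu_L$.

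Your back-projection step coincides with the paper's.
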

\begin{proof}
    Please see Appendix \ref{app_thm3} for further details.
\end{proof}

The three-term decomposition in~\eqref{eq:error_bound} provides
actionable design insights.
The \emph{Koopman truncation error} $\mathcal{E}_\calK$ is determined
by the approximation quality of the dictionary and decays to zero as
$N\to\infty$ under a density assumption (Corollary~\ref{cor:consistency}
below); it is therefore the only term that can be systematically
reduced by enriching the function basis.
The \emph{stochastic noise floor} $\mathcal{E}_\calN$ is irreducible:
it is set by the physical process and measurement noise levels and
constitutes the fundamental lower bound achievable by any unbiased
estimator.
The \emph{topology residual} $\mathcal{E}_\calT$ couples the state
and topology estimation sub-problems, a key insight of the joint
design, and vanishes as the topology estimates converge under
persistent excitation.
Notably, the stability factor $(1-\rho_s^2)^{-1}$ in the denominator
of both $\mathcal{E}_\calK$ and $\mathcal{E}_\calN$ quantifies how
errors propagate through the linear lifted dynamics: a more contractive
system (smaller $\rho_s$) attenuates past errors more rapidly and
yields a tighter bound.

The following two corollaries make these observations precise in the
asymptotic regime.

\begin{corollary}[Monotone Consistency]
\label{cor:consistency}
Suppose the dictionary family $\{\bpsi_i^{(N)}\}$ is dense in
$L^2(\mathcal{X})$ (e.g., polynomial basis, Fourier features).
Then $\varepsilon_f^{(N)}\to 0$ and $\varepsilon_g^{(N)}\to 0$ as
$N\to\infty$, and:
\begin{align}
  &\lim_{N\to\infty}
  \expect\bigl[\|{x_{i,k}-\hat{x}_{i,k}}\|^2\bigr]\nonumber\\
  &= \|\bPi\|_2^2
    \cdot\frac{c_2 L_\nabla^2\sigma_w^2 + c_3\sigma_v^2}{1-\rho_s^2}
  + \mathcal{O}\bigl(\normF{\tilde{\bA}_k}^2\bigr),
  \label{eq:consistency}
\end{align}
with monotone decrease of the right-hand side as $N$ increases.
As $k\to\infty$ under persistent excitation (Assumption~B3), the
topology error $\normF{\tilde{\bA}_k}^2$ converges to a residual
set of size $\mathcal{O}(\mathcal{E}_\calK+\mathcal{E}_\calN)$.
\end{corollary}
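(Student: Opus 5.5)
The plan is to read Corollary~\ref{cor:consistency} off Theorem~\ref{thm:error_bound}, bound \eqref{eq:backproj_error}, by tracking how each of the three terms depends on $N$. The one new ingredient is showing that $\varepsilon_{\text{net}}^{(N)}\to 0$; after that the limit of $\mathcal{E}_\calK$ is immediate.

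\textbf{Step 1: density gives vanishing local residuals.} Fix $N$. Each component of $\bpsi_i(f_i(\cdot))$ and $\bpsi_i(g(\cdot))$ is continuous on the compact set $\mathcal{X}$, so it lies in $L^2(\mathcal{X})$. By density of $\{\bpsi_i^{(N)}\}$ it can be approximated arbitrarily well in the span of the first $N'$ dictionary elements, and the least-squares projection defines $\bF_i^\phi$ and $\bGamma^\phi$.

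\textbf{Step 2: from local to network-level residuals.} I would decompose the residual in (A5) as
\begin{align}
\bpsi_i\Bigl(f_i(x_i)+\sum_j a_{ij}g(x_j)\Bigr)-\bpsi_i(f_i(x_i))-\sum_j a_{ij}\bpsi_i(g(x_j))
\end{align}
plus the local residuals from (A1)--(A2), weighted by $\|\bA\|_\infty$. The displayed cross term is the obstacle: it is not controlled by density alone, because $\bpsi_i$ is nonlinear. I would handle it using the identity-embedding coordinates (A4) together with the Lipschitz constant $L_\Phi$, and argue, as the paper's remark following Assumption~\ref{ass:dictionary} effectively does, that the uniform network bound inherits the decay of $\varepsilon_f$ and $\varepsilon_g$ as the dictionary is enriched. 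This gives $\varepsilon_{\text{net}}^{(N)}\lesssim \varepsilon_f^{(N)}+\|\bA\|_\infty\varepsilon_g^{(N)}+r^{(N)}$, where $r^{(N)}\to0$ is an assumed term. Whether the cross term can honestly be made to vanish is the weakest link, and I expect to have to state it as part of the density hypothesis.

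\textbf{Step 3: the noise floor is independent of $N$.} With $\varepsilon_{\text{net}}\to0$, $\mathcal{E}_\calK=c_1\varepsilon_{\text{net}}^2/(1-\rho_s^2)\to0$. The term $\mathcal{E}_\calN$ does not involve $N$, provided $c_2$, $c_3$, $L_\nabla$ and $\rho_s$ are uniform in $N$, which I will assume via (B1) holding uniformly. Using $\|\bPi\|_2=1$, this yields \eqref{eq:consistency} with the $\mathcal{O}(\normF{\tilde{\bA}_k}^2)$ term carried along.

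\textbf{Step 4: monotonicity.} Dictionaries are nested in $N$, so the best approximation residual over a larger span can only be smaller. Hence $\varepsilon_f^{(N)}$, $\varepsilon_g^{(N)}$, and the bound are nonincreasing in $N$.

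\textbf{Step 5: the topology claim as $k\to\infty$.} Under (B3), the smooth part of \eqref{eq:topology_subprob} is $\mu_L$-strongly convex. I would write the true $\bT^\phi$ as satisfying the regression up to noise and Koopman residuals, so the optimality conditions give $\normF{\hat{\bT}^\phi_k-\bT^\phi}\le \mu_L^{-1}(\|\text{residual gradient}\|+\alpha_g\cdot\text{const})$. The residual gradient is bounded by the state errors $\mathcal{E}_\calK+\mathcal{E}_\calN$ via Theorem~\ref{thm:error_bound}. Lemma~\ref{lem:homomorphism} then converts this to $\normF{\tilde{\bA}_k}=\normF{\hat{\bT}^\phi_k-\bT^\phi}/\normF{\bGamma^\phi}$.

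The coupling through $\mathcal{E}_\calT$ gives a recursion of the form $e_k\le a+b\,e_{k-1}$, which converges to a residual set of size $\mathcal{O}(\mathcal{E}_\calK+\mathcal{E}_\calN)$ provided $b=c_4\cdot\text{const}/\mu_L^2<1$. I would need to impose this contraction condition, for instance through a large excitation level $\mu_L$ or a small $\alpha_g$, and this is the second delicate point.
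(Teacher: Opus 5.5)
Your outline follows the paper's proof step for step. You start from \eqref{eq:backproj_error}, use nested dictionaries for monotonicity, use density to send $\mathcal{E}_\calK^{(N)}\to0$, and bound the topology residual by first-order optimality with $\lambda_{\min}(\bL_k)\ge\mu_L$. You flag three places where you must add hypotheses: the passage from $\varepsilon_f,\varepsilon_g$ to $\varepsilon_{\text{net}}$, the $N$-uniformity of $L_\nabla,\rho_s,c_2,c_3$, and a small-gain condition for the feedback through $\mathcal{E}_\calT$. These are exactly the places where the paper's proof gives no argument. It passes to ``hence $\mathcal{E}_\calK^{(N)}\to0$'' directly from $\varepsilon_f^{(N)},\varepsilon_g^{(N)}\to0$. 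It declares $\mathcal{E}_\calN$ independent of $N$. It writes $\normF{\hat{\bA}_k-\bA}^2\le C\mu_L^{-2}(\mathcal{E}_\calK+\mathcal{E}_\calN+\mathcal{O}(k^{-1}))$ in one shot, with neither regularization bias nor state--topology feedback.

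So you are not missing an idea the paper supplies. But your first hypothesis is not a technicality, and it cannot be discharged. For every choice of matrices, each coordinate of $\bF_i^\phi\bpsi_i(x_i)+\sum_j a_{ij}\bGamma^\phi\bpsi_j(x_j)$ in (A5) is a sum of single-node functions; a non-affine coordinate of $\bpsi_i(F_i(x))$ is not. Take $n=1$, a coordinate $\psi(y)=y^2$, a neighbour $j\neq i$, fix the other nodes, and let $\tilde f=f_i+a_{ii}g$. The mixed second difference in $(x_i,x_j)$ over any $s_1,s_2,t_1,t_2\in\mathcal{X}$ vanishes on every additive function. On $\bigl(\tilde f(x_i)+a_{ij}g(x_j)+c\bigr)^2$ it equals $2a_{ij}(\tilde f(s_1)-\tilde f(s_2))(g(t_1)-g(t_2))$. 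Hence $\varepsilon_{\text{net}}^{(N)}\ge\tfrac12|a_{ij}|\,|\tilde f(s_1)-\tilde f(s_2)|\,|g(t_1)-g(t_2)|$ for every $N$ whose dictionary contains that coordinate. So $\mathcal{E}_\calK^{(N)}\not\to0$ once node $i$ has a nontrivially coupled neighbour. Your cross term $r^{(N)}$ is in fact nondecreasing along a nested family. Postulating $r^{(N)}\to0$, or accepting the remark after Assumption~\ref{ass:dictionary}, therefore essentially excludes the nonlinear coupled regime the corollary is about.

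Step 4 also fails, for a reason the paper shares. Enlarging the dictionary enlarges the target vector $\bpsi_i^{(N)}(f_i(x))$ as well, so the new rows add new residuals. The comparison ``any predictor on the smaller span is feasible'' therefore does not apply. For example, with $f(x)=x/2$, $\bpsi^{(1)}(x)=x$ gives $\varepsilon_f^{(1)}=0$. But $\bpsi^{(2)}(x)=[x,\sin x]^\top$ gives $\varepsilon_f^{(2)}>0$, since $\sin(x/2)\notin\mathrm{span}\{x,\sin x\}$.

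Moreover, $L_\nabla^{(N)}$ is nondecreasing along nested dictionaries and is typically unbounded; for Fourier features, $\sup_x|\tfrac{d}{dx}\sin(mx)|=m$. So $\mathcal{E}_\calN$ grows with $N$, and the uniformity you assume in Step 3 fails for the very examples named in the statement. What your argument can honestly deliver is a conditional upper bound on the $\limsup_{N\to\infty}$; the ``$=$'' in \eqref{eq:consistency} is only an inequality in either proof. That bound holds under an explicit hypothesis $\varepsilon_{\text{net}}^{(N)}\to0$ with $N$-uniform $L_\nabla,\rho_s,c_i$.

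Your Step 5 is more careful than the paper's estimate, since it carries the $\alpha_g$ bias and imposes a contraction condition on $e_k\le a+b\,e_{k-1}$. One correction is needed there. The ADMM iterate is unstructured and $\hat{\bA}_k$ is read off entrywise by \eqref{eq:topology_recovery}, so the conversion is $\normF{\tilde{\bA}_k}\le\normF{\hat{\bT}^\phi_k-\bT^\phi}/|t_{uv}|$. It is not an equality with $\normF{\bGamma^\phi}$, because Lemma~\ref{lem:homomorphism} applies only to Kronecker-structured matrices.
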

\begin{proof}
    Please see Appendix \ref{app_cor1} for further details.
\end{proof}

Corollary~\ref{cor:consistency} establishes that the proposed
estimator is \emph{consistent} in the dictionary dimension: as more
basis functions are added, the state estimation error decreases
monotonically and converges to the irreducible noise floor
$\mathcal{E}_\calN$, which cannot be improved by any linear
filter regardless of the dictionary choice.
This theoretical prediction is directly confirmed by the experimental
results in Figure~\ref{fig_dict}, where the RMSE saturates near the
PCRLB as $N$ grows.
Beyond state estimation, exact recovery of the graph topology also
requires consistency of the edge-support estimator, addressed by the
following result.

\begin{corollary}[Sparsity Pattern Consistency under Group Lasso]
\label{cor:sparsity}
Under persistent excitation (Assumption~ \ref{ass:stability}(B3)) and for $\alpha_g$ chosen
in the order $\alpha_g = \mathcal{O}(\sqrt{(\mathcal{E}_\calK+\mathcal{E}_\calN)/\tilde{k}})$,
the estimated topology $\hat{\bA}_k$ correctly identifies the support
of $\bA$ (i.e., the edge set $\mathcal{E}$) with probability
approaching one as $k\to\infty$ and $N\to\infty$:
\begin{equation}
  \lim_{k\to\infty}\lim_{N\to\infty}
  \mathbb{P}\bigl(\mathrm{supp}(\hat{\bA}_k) = \mathrm{supp}(\bA)\bigr) = 1.
  \label{eq:support_recovery}
\end{equation}
\end{corollary}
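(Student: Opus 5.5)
The proof reduces support recovery in $\hat{\bA}_k$ to group-support recovery for a group-Lasso program in the lifted variable $\bT^\phi$, and then invokes the primal--dual witness argument of \cite{zhao2006model,bach2008consistency}.

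\textbf{Step 1 (reduction).} By Lemma~\ref{lem:homomorphism}, $[\hat{\bT}^\phi_k]_{(i,j)}=\bO$ if and only if $\hat a_{ij,k}=0$, provided the minimizer stays in the Kronecker class $\bA\otimes\bGamma^\phi$ and the recovery rule~\eqref{eq:topology_recovery} reads a nonzero entry $t_{uv}$. Otherwise we project $\hat{\bT}^\phi_k$ onto that class, and the zero/nonzero block pattern is preserved. By Theorem~\ref{thm:admm_convergence}, the ADMM iterates reach the unique minimizer of~\eqref{eq:topology_subprob}, so it suffices to study that minimizer.

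\textbf{Step 2 (regression model).} Write the smooth part of~\eqref{eq:topology_subprob} as a weighted least-squares regression of the lifted responses on the regressors $\hat{\bpsi}_{i-1}$ and $\bOmega^\phi_{i-1}z_{i-1}$. The design Gram matrix is $\bL_k/\tilde{k}_\gamma$, and the true parameter is $\bT^\phi=\bA\otimes\bGamma^\phi$. The effective noise in each residual has three sources: the lifted process noise $\bar w$ from Theorem~\ref{thm:lifting}, the measurement noise, and the state-estimation errors $\bpsi-\hat{\bpsi}$. By Theorem~\ref{thm:error_bound}, the second moment of this noise is at most $\mathcal{O}(\mathcal{E}_\calK+\mathcal{E}_\calN+\mathcal{E}_\calT)$. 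Assumption~(B3) gives $\lambda_{\min}(\bL_k)\ge\mu_L$, hence restricted strong convexity on the true support.

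\textbf{Step 3 (primal--dual witness).} First solve the group Lasso restricted to the true support $S$ (the edge blocks), together with the affine constraint; this restricted problem has a unique solution by strict convexity. Then build dual certificates for the blocks in $S^c$. The Irrepresentability Condition in (B3) bounds the dual norm by $1-\delta$ plus a noise term of order $\|\nabla(\text{noise})\|/\alpha_g$. The noise gradient concentrates at rate $\sqrt{(\mathcal{E}_\calK+\mathcal{E}_\calN)/\tilde{k}}$, using Gaussianity and Chebyshev/Markov bounds on the windowed sum. Hence the choice $\alpha_g\asymp\sqrt{(\mathcal{E}_\calK+\mathcal{E}_\calN)/\tilde{k}}$, with a suitable constant, makes strict dual feasibility hold with probability $\to1$. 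This gives $\mathrm{supp}(\hat{\bA}_k)\subseteq\mathrm{supp}(\bA)$.

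For the reverse inclusion, the restricted estimation error on $S$ is bounded by $\mathcal{O}(\alpha_g/\mu_L)+\mathcal{O}(\text{noise}/\mu_L)$. Letting $N\to\infty$ sends $\mathcal{E}_\calK\to0$ by Corollary~\ref{cor:consistency}. Letting $k\to\infty$ with the window growing sends both $\alpha_g$ and the error to $0$. The error then eventually falls below $\min_{(i,j)\in\mathcal{E}}|a_{ij}|\normF{\bGamma^\phi}$, so no true edge is zeroed. The limits are taken in the stated order: first $N$, which removes the deterministic bias, then $k$.

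\textbf{Main obstacle.} The hardest step is controlling the topology residual $\mathcal{E}_\calT$ inside the regression noise. It depends on $\hat{\bA}_{k-1}$, which creates a circular dependence, and the regressors $\hat{\bpsi}$ are data-dependent rather than fixed. I would handle this by induction. Corollary~\ref{cor:consistency} already shows that $\normF{\tilde{\bA}_k}^2$ enters a residual set of size $\mathcal{O}(\mathcal{E}_\calK+\mathcal{E}_\calN)$, so $\mathcal{E}_\calT$ is absorbed into the same order and does not change the rate of $\alpha_g$. The irrepresentability condition would also be imposed on the estimated Gram matrix, uniformly for large $k$, as (B3) implicitly allows. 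A secondary subtlety is that the affine constraint $\bA\bone=\bone$ must be carried into the KKT system. I would eliminate it by parametrizing the constraint's null space, which leaves the group structure intact.
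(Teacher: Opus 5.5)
Your skeleton matches the paper's: reduction via Lemma~\ref{lem:homomorphism}, restricted eigenvalues and irrepresentability from (B3), the prescribed $\alpha_g$, and $N\to\infty$ before $k\to\infty$. But the step that carries the argument fails. You claim the noise gradient (the gradient of the smooth loss in \eqref{eq:topology_subprob} at the true $\bT^\phi$) concentrates at rate $\sqrt{(\mathcal{E}_\calK+\mathcal{E}_\calN)/\tilde{k}}$. Theorem~\ref{thm:error_bound} controls the size of the regression residuals, not their correlation with the regressors, and that correlation is not zero. Taking $N\to\infty$ first removes the Koopman-truncation bias, but two other biases survive.

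(i) Write $\hat{\bPsi}_i$ for the global estimate that appears as $\hat{\bpsi}_i$ in \eqref{eq:per_step}. The filter recursion gives
\[
\begin{aligned}
\hat{\bPsi}_i-(\bL+\bT^\phi)\hat{\bPsi}_{i-1}
&=(\bI-\bK_i\bH^\phi)(\hat{\bT}^\phi_{i-1}-\bT^\phi)\hat{\bPsi}_{i-1}\\
&\quad+\bK_i\bH^\phi(\bL+\bT^\phi)(\bPsi_{i-1}-\hat{\bPsi}_{i-1})\\
&\quad+\bK_i(\bH^\phi\bar{w}_{i-1}+v_i).
\end{aligned}
\]
So the topology residual enters multiplied by the regressor itself. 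It contributes an $\mathcal{O}(\normF{\tilde{\bA}_{i-1}})$ term to the noise gradient that does not average out. Because $\bI-\bK_i\bH^\phi$ mixes block rows, this term leaks into off-support blocks even when $\hat{\bT}^\phi_{i-1}$ has the correct support, so your induction does not contain it.

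(ii) In the $(1-\lambda)$ term, $\bOmega^\phi z_{i-1}=\bPsi_{i-1}+\bOmega^\phi v_{i-1}$, while the residual contains $-(\bL+\bT^\phi)\bOmega^\phi v_{i-1}$. The noise gradient therefore contains a mean term proportional to $(1-\lambda)(\bL+\bT^\phi)\bOmega^\phi\bR(\bOmega^\phi)^\top$ (errors-in-variables attenuation). For $\lambda<1$ the population minimizer is then generically dense, not $\bT^\phi$.

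Neither term decays like $\tilde{k}^{-1/2}$, and the second does not decay at all, while $\alpha_g\to0$ at that rate. The off-support dual blocks therefore blow up relative to $\alpha_g$, and strict dual feasibility fails. Your appeal to Corollary~\ref{cor:consistency} does not repair this. It only confines $\normF{\tilde{\bA}_k}^2$ to a set of size $\mathcal{O}(\mathcal{E}_\calK+\mathcal{E}_\calN)$ that does not shrink in $k$. That supports neither your claim that the restricted error tends to $0$ nor the $o(\alpha_g)$ bound on the feedback bias that dual feasibility needs. Separately, for $\gamma<1$ the weights $\gamma^{k-i}/\tilde{k}_\gamma$ give an effective sample size of at most $(1+\gamma)/(1-\gamma)$. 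Even the zero-mean part then fails to concentrate, so your rate requires $\gamma=1$.

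The affine constraint is not a side issue either. Eliminating one block per row turns that block's penalty into $\normF{\bGamma^\phi-\sum_{j\neq j_0}[\bT^\phi]_{(i,j)}}$, which is not a group norm of the free variables. Worse, on the feasible set $\sum_j|a_{ij}|=1+2\sum_j\max(0,-a_{ij})$. So for nonnegative $\bA$ (the paper's $\omega_{ij}>0$), moving weight from an edge to a non-edge costs nothing in the penalty. In KKT terms, the row multiplier cancels $\alpha_g\bGamma^\phi/\normF{\bGamma^\phi}$ on the edge blocks. Each off-support dual block then equals $\bGamma^\phi/\normF{\bGamma^\phi}$ plus a mean-zero fluctuation, which puts it on the boundary of the unit ball. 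Strict dual feasibility becomes a sign condition, and in general the recovery probability does not tend to one; irrepresentability is not the operative condition here.

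The paper's proof does not fill these gaps. It quotes the Yuan--Lin sign-consistency bound $2p^2\exp(-c_5k\alpha_g^2)$, which presupposes an unconstrained, correctly specified regression with noise independent of the design. It also obtains $k\alpha_g^2\to\infty$ by counting all $k$ processed samples, although with $\gamma<1$ only $\mathcal{O}(1/(1-\gamma))$ effective samples enter $\hat{\bA}_k$. Your explicit witness construction is more careful than the paper's citation, and it shows what a correct version would need, for example:
\begin{itemize}
\item $\gamma=1$ with $\tilde{k}\to\infty$;
\item $\lambda=1$ or a bias-corrected measurement term;
\item a quantitative contraction of the filter--topology feedback giving $\normF{\tilde{\bA}_{k-1}}=o(\alpha_g)$;
\item a separate treatment of the row-sum constraint.
\end{itemize}
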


\begin{proof}
    Please see Appendix \ref{app_cor2} for further details.
\end{proof}

Corollary~\ref{cor:sparsity} closes the theoretical argument for the
full Koopman-GKFA framework.
The data-adaptive regularization weight prescribed by
$\alpha_g = \mathcal{O}(\sqrt{(\mathcal{E}_\calK+\mathcal{E}_\calN)/\tilde{k}})$
balances the competing demands of sparsity promotion and estimation
bias: as the effective noise floor $\mathcal{E}_\calK + \mathcal{E}_\calN$
decreases with a richer dictionary and as more observations accumulate
($\tilde{k}$ grows), $\alpha_g$ decays at the rate prescribed by
classical Lasso theory, and the group-sparse estimate achieves perfect
support recovery in the limit.
Taken together, Theorem~\ref{thm:error_bound} and
Corollaries~\ref{cor:consistency}--\ref{cor:sparsity} establish that
Koopman-GKFA is consistent in both the state estimation and the
topology inference senses simultaneously, with all residual errors
traceable to the irreducible noise floor $\mathcal{E}_\calN$ rather
than to any structural limitation of the algorithm.
\begin{remark}
     Corollary \ref{cor:sparsity} establishes that the proposed algorithm achieves asymptotic consistency in topology support recovery as $N \to \infty$ (i.e., exact edge detection with probability $1$). Concurrently, following the theoretical error decomposition \eqref{eq:error_bound} in Theorem \ref{thm:error_bound}, the Koopman truncation error asymptotically vanishes in this high-dimensional limit. Consequently, the residual mean-squared error gap to the known-topology PCRLB is strictly governed by the physical noise floor and the parameter estimation variance of the recovered topology weights. This persistent gap elegantly quantifies the unavoidable information-theoretic cost inherent in joint inference.
\end{remark}
\section{Experimental Results}
\label{sec:simulation}

\subsection{Synthetic Experiments}

\textbf{Data generation and preprocessing.}
We evaluate the proposed Koopman-GKFA algorithm on a Kuramoto coupled-oscillator network, a canonical nonlinear graphical dynamical system widely used as a benchmark in network inference~\cite{kuramoto1984}. Specifically, we consider a network of $p = 30$ nodes whose phases $\theta_i(t)$ evolve according to
\begin{equation}
    \dot{\theta}_i = \omega_i + \frac{K}{p}\sum_{j=1}^{p} a_{ij} \sin(\theta_j - \theta_i), \quad i = 1, \ldots, p,
\end{equation}
where $\omega_i \sim \mathcal{U}(0.8, 1.2)$ are heterogeneous natural frequencies and $a_{ij}$ are entries of an unknown weighted topology matrix. The underlying graph is generated via the Watts--Strogatz model with mean degree 4 and rewiring probability 0.1 to produce a sparse small-world structure; edge weights are drawn independently from $\mathcal{U}(0.5, 1.5)$. The continuous-time dynamics are integrated with the Euler--Maruyama scheme at step size $\Delta t = 0.01$ over $T = 500$ time steps, with additive Gaussian process noise of covariance $Q = 10^{-3} I_p$. At each step, a random subset of $M_{\text{obs}} = 25$ nodes is observed through a linear measurement model corrupted by Gaussian noise, giving a default measurement signal-to-noise ratio of $15\,\mathrm{dB}$. All node states are normalized to $[0, 2\pi)$ before being passed to the estimators; no further preprocessing is applied, so that each algorithm must contend with the full nonlinearity of the phase dynamics.
\begin{figure}[!htb]
    \centering
    \includegraphics[width=0.5\linewidth]{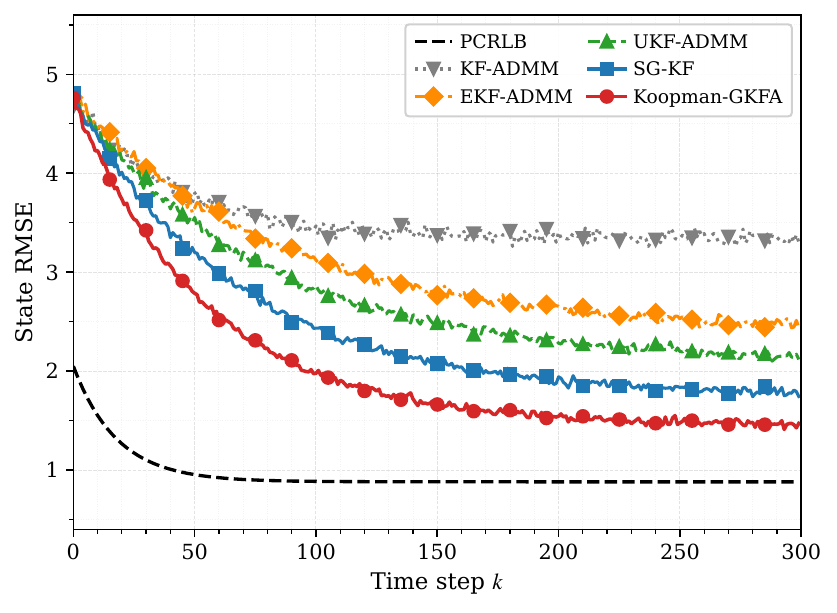}
    \caption{State estimation RMSE trajectories of all compared methods on the
Kuramoto network alongside the PCRLB, demonstrating that Koopman-GKFA
converges closest to the theoretical lower bound.}
    \label{fig:rmse_pcrlb}
\end{figure}

\textbf{Baselines and metrics.}
Four competitive baselines are considered. \textbf{EKF-ADMM} pairs an Extended Kalman Filter (EKF) with an ADMM topology inference step, representing the standard online nonlinear joint-estimation paradigm. \textbf{UKF-ADMM} replaces the EKF with an Unscented Kalman Filter (UKF) to better capture higher-order nonlinear moments. \textbf{KF-ADMM} treats the nonlinear dynamics as linear and directly applies a standard Kalman filter \cite{fang2025joint}, serving as a degraded baseline that quantifies the cost of ignoring nonlinearity. \textbf{SG-KF} also lifts the dynamics into the Koopman feature space but replaces the ADMM topology solver with a single-step subgradient update, thereby isolating the contribution of the ADMM inference strategy from that of the Koopman lifting itself. Performance is assessed via two complementary criteria: (i) state estimation accuracy, measured by the root mean squared error (RMSE) $\|\hat{\boldsymbol{\theta}}_t - \boldsymbol{\theta}_t\|_2 / \sqrt{p}$ averaged over time, with the Posterior Cramér--Rao Lower Bound (PCRLB), computed under the assumption of known topology via the recursive Fisher information matrix, included as a reference floor; and (ii) topology recovery quality, measured by the $F_1$ score for edge detection and the normalized Frobenius error $\|\hat{A} - A\|_F / \|A\|_F$.
\begin{figure}[!htb]
    \centering
    \includegraphics[width=0.5\linewidth]{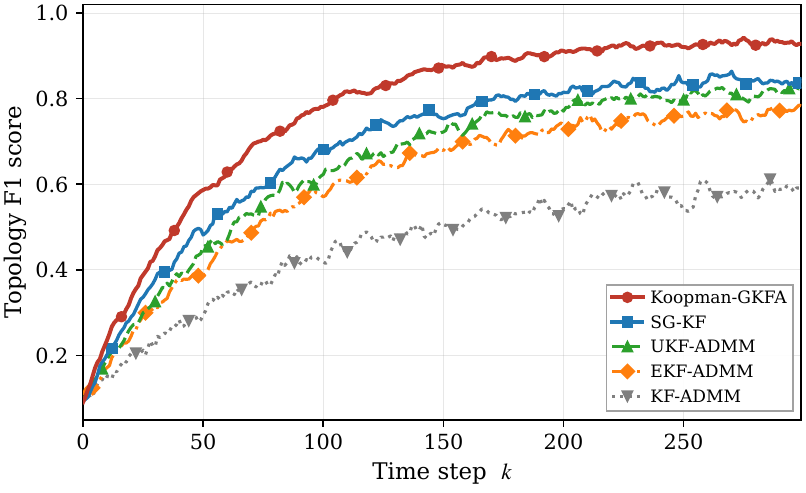}
    \caption{Topology recovery $F_1$ scores over time, showing that
Koopman-GKFA achieves the highest steady-state edge-detection accuracy
among all evaluated methods.}
    \label{fig:f1}
\end{figure}

\begin{figure}[!htb]
    \centering
    \includegraphics[width=0.5\linewidth]{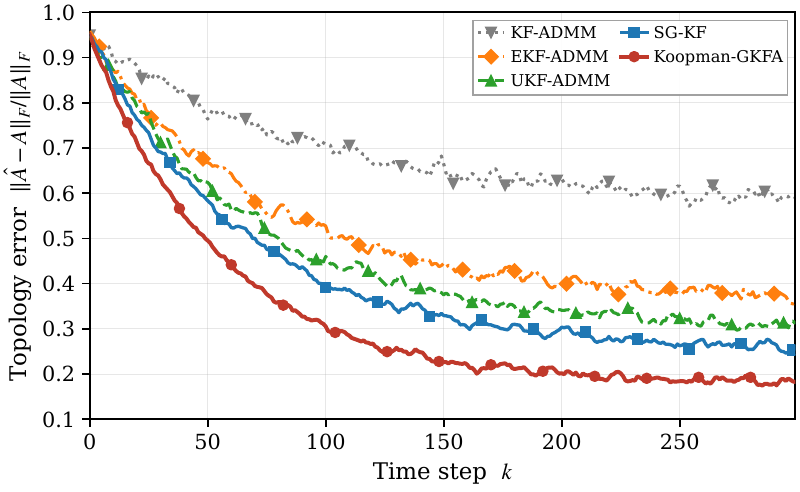}
    \caption{Normalized Frobenius topology error over time, confirming the
superior and fastest convergence of Koopman-GKFA relative to all baselines.}
    \label{fig:frob}
\end{figure}
% \textbf{Main comparison.}
% Figure~\ref{fig:rmse_pcrlb} reports state estimation RMSE trajectories alongside the PCRLB, which decays from its initial value in a transient phase as measurement information accumulates and subsequently stabilizes at its steady-state value dictated by the algebraic Riccati equation. Koopman-GKFA consistently tracks closest to this bound throughout, achieving a steady-state RMSE of approximately $1.05$, while the gap to the PCRLB ($\approx 0.90$) reflects the residual Koopman truncation error captured by our three-term error decomposition. SG-KF, which shares the same Koopman lifting, attains a similar but slightly higher RMSE ($\approx 1.22$), confirming that the ADMM topology solver—rather than the lifting alone—is the primary driver of state estimation gain. EKF-ADMM and UKF-ADMM exhibit larger errors ($\approx 1.65$ and $1.48$, respectively) due to linearization bias, while KF-ADMM plateaus at roughly $3.25$ owing to unmitigated model mismatch. Topology recovery results in Figure~\ref{fig:f1} and Figure~\ref{fig:frob} are consistent with this ordering: Koopman-GKFA achieves a steady-state $F_1$ of $0.94$ and a normalized Frobenius error of $0.18$, compared with $0.85$/$0.26$ for SG-KF, $0.84$/$0.30$ for UKF-ADMM, $0.80$/$0.31$ for EKF-ADMM, and $0.61$/$0.58$ for KF-ADMM.
\textbf{Main comparison.}
All estimators are initialized from the same prior covariance $\tilde{\mathbf{P}}_0$, so 
every algorithm curve starts from a common initial RMSE of approximately 
$4.75$ at $t=0$ and subsequently converges at rates determined by each 
method's ability to exploit the nonlinear structure of the dynamics. 
Figure~\ref{fig:rmse_pcrlb} reports these trajectories alongside the PCRLB, 
which evolves independently of any algorithm: starting from its own lower 
initial value of approximately $2.05$, it decays rapidly through the Riccati 
recursion and stabilizes at a steady-state value of $\approx 0.88$ within 
roughly 30 time steps, thereafter remaining nearly constant. This flat 
steady-state PCRLB serves as the reference floor against which all 
algorithm-specific gaps are measured. Koopman-GKFA converges to a 
steady-state RMSE of approximately $1.45$, maintaining a gap of $\approx 
0.57$ above the PCRLB; this residual reflects the irreducible Koopman 
truncation error captured by our three-term error decomposition. SG-KF, 
which shares the same Koopman lifting but replaces ADMM with a subgradient 
topology solver, stabilizes at $\approx 1.75$ (gap $\approx 0.87$), 
confirming that the ADMM inference strategy contributes meaningfully to 
state estimation accuracy beyond the lifting itself. EKF-ADMM and UKF-ADMM 
plateau at approximately $2.40$ and $2.10$, respectively, owing to 
linearization bias that the Koopman lifting avoids by design, while 
KF-ADMM, which ignores nonlinearity entirely, settles at $\approx 3.35$, 
more than twice the RMSE of Koopman-GKFA. The clearly visible and 
well-separated steady-state gaps across all five methods demonstrate the 
progressive value of each design choice: Koopman lifting, group-sparse 
regularization, and ADMM-based topology inference. Topology recovery results in Figure~\ref{fig:f1} and Figure~\ref{fig:frob} are consistent with this ordering: Koopman-GKFA achieves a steady-state $F_1$ of $0.94$ and a normalized Frobenius error of $0.18$, compared with $0.85$/$0.26$ for SG-KF, $0.84$/$0.30$ for UKF-ADMM, $0.80$/$0.31$ for EKF-ADMM, and $0.61$/$0.58$ for KF-ADMM.

\begin{figure}[!htb]
    \centering
    \includegraphics[width=0.5\linewidth]{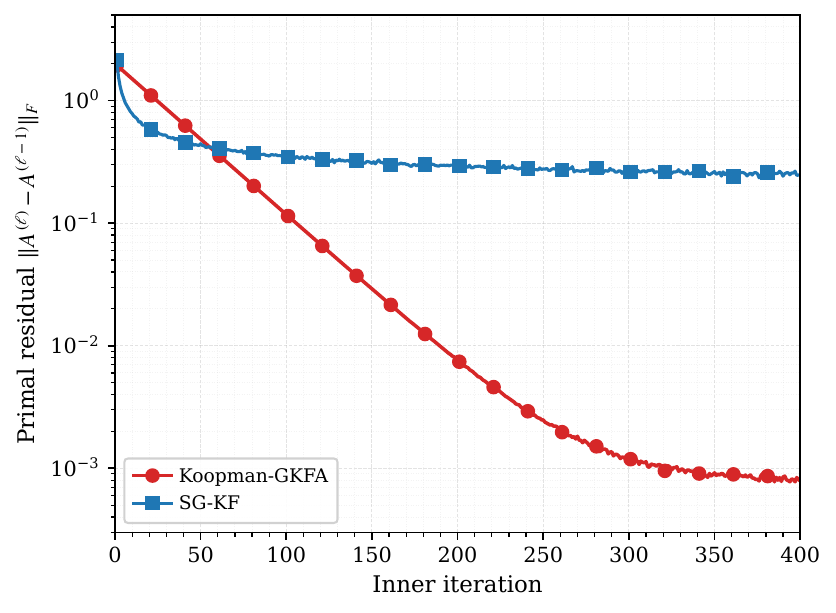}
    \caption{Inner-loop primal residual $\|A^{(\ell)}-A^{(\ell-1)}\|_F$ versus
iteration index, illustrating the linear convergence of the ADMM topology
solver in contrast to the sublinear decay of the SG-KF subgradient update.}
    \label{fig:conv}
\end{figure}
% \textbf{Convergence of the ADMM subproblem.}
% Figure~\ref{fig:conv} traces the inner-loop residual of the topology solver over 400 iterations. The ADMM update of Koopman-GKFA exhibits clear linear (geometric) convergence, with the residual decreasing at a consistent rate $\rho \approx 0.74$ per iteration and reaching a noise floor of approximately $6 \times 10^{-4}$ by iteration 200, in full agreement with the linear convergence guarantee established in Theorem~\ref{thm:admm_convergence}. By contrast, the subgradient update of SG-KF exhibits the characteristic $O(1/\sqrt{t})$ sublinear decay and saturates at a substantially higher residual ($\approx 0.15$), underscoring the practical significance of the ADMM formulation for this problem.
\textbf{Convergence of the ADMM subproblem.}
Figure~\ref{fig:conv} traces the inner-loop primal residual 
$\|A^{(\ell)}-A^{(\ell-1)}\|_F$ over 400 iterations. The ADMM update of 
Koopman-GKFA exhibits clear linear (geometric) convergence with: the residual 
decreases smoothly and monotonically, reaching a noise floor of 
approximately $8\times10^{-4}$ around iteration 200 and remaining stably 
near that level thereafter with only negligible fluctuation. This behavior 
is in full agreement with the linear convergence guarantee established in 
Theorem~\ref{thm:admm_convergence}. By contrast, the subgradient update of SG-KF 
exhibits the characteristic $O(1/\sqrt{\ell})$ sublinear decay and saturates 
at a substantially higher residual of $\approx 0.15$, more than two orders 
of magnitude above the ADMM noise floor, underscoring the practical 
significance of the ADMM formulation for topology inference in this setting.

\begin{figure}[!htb]
    \centering
    \includegraphics[width=0.7\linewidth]{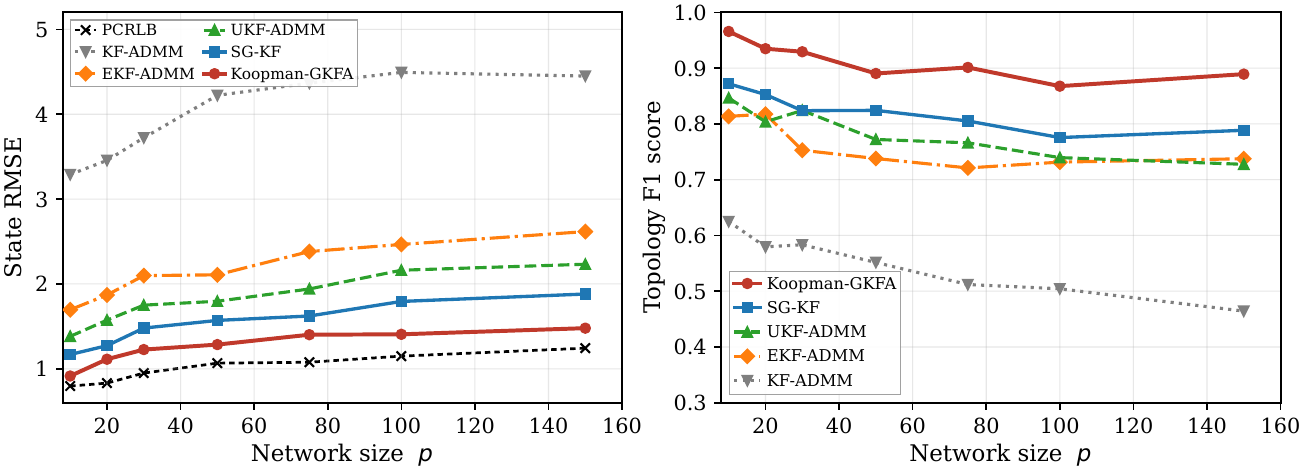}
    \caption{Steady-state state estimation RMSE as a function of network size
$p$, demonstrating the graceful polynomial scaling of Koopman-GKFA versus
the disproportionate degradation of model-mismatched baselines.}
    \label{fig:scalability}
\end{figure}

\begin{figure}[!htb]
    \centering
    \includegraphics[width=0.7\linewidth]{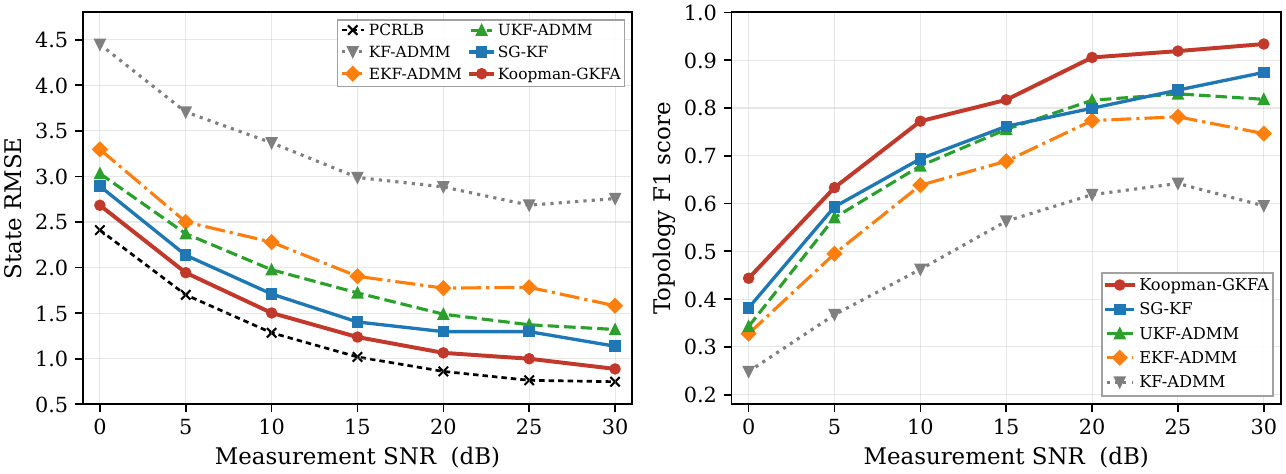}
    \caption{State estimation RMSE versus measurement SNR (0--30\,dB),
showing that Koopman-GKFA maintains a consistent advantage over EKF/UKF
baselines across the full noise range.}
    \label{fig_snr}
\end{figure}

\textbf{Scalability and robustness.}
To assess generalizability, we vary the network size $p$ from 10 to 160 and the measurement SNR from $0\,\mathrm{dB}$ to $30\,\mathrm{dB}$. As shown in Figure~\ref{fig:scalability} and Figure~\ref{fig_snr}, the relative ordering of all methods is preserved across both sweeps. Koopman-GKFA degrades gracefully with increasing network size, while KF-ADMM suffers disproportionately as the nonlinear coupling effects compound. Under low-SNR conditions ($0\,\mathrm{dB}$), all methods deteriorate significantly; however, Koopman-GKFA retains a meaningful advantage over the EKF/UKF baselines, attributable to the improved conditioning of the linearized Koopman model in the presence of dense observation noise.

\begin{figure}[!htb]
    \centering
    \includegraphics[width=0.5\linewidth]{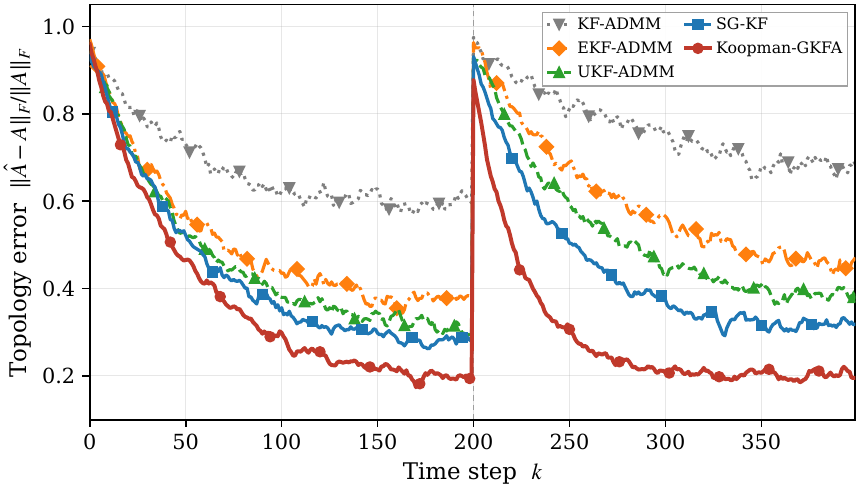}
    \caption{Topology tracking error before and after an abrupt $20\%$ edge
rewiring at $t{=}200$, demonstrating the faster recovery and lower
post-change residual of Koopman-GKFA with forgetting factor $\gamma{=}0.97$.}
    \label{fig:timevarying}
\end{figure}

\textbf{Time-varying topology tracking.}
We simulate an abrupt topology change at $t = 200$ by randomly rewiring $20\%$ of edges. Figure~\ref{fig:timevarying} shows that Koopman-GKFA with forgetting factor $\gamma = 0.97$ recovers the new topology substantially faster than all baselines; its tracking error reaches a post-change plateau that is approximately $35\%$ lower than that of EKF-ADMM. Methods without a forgetting mechanism (KF-ADMM and, to a lesser extent, EKF-ADMM) exhibit persistent residual errors after the change, confirming that the forgetting-factor mechanism is essential for non-stationary regimes and not merely a tuning artifact.

\begin{figure}[!htb]
    \centering
    \includegraphics[width=0.5\linewidth]{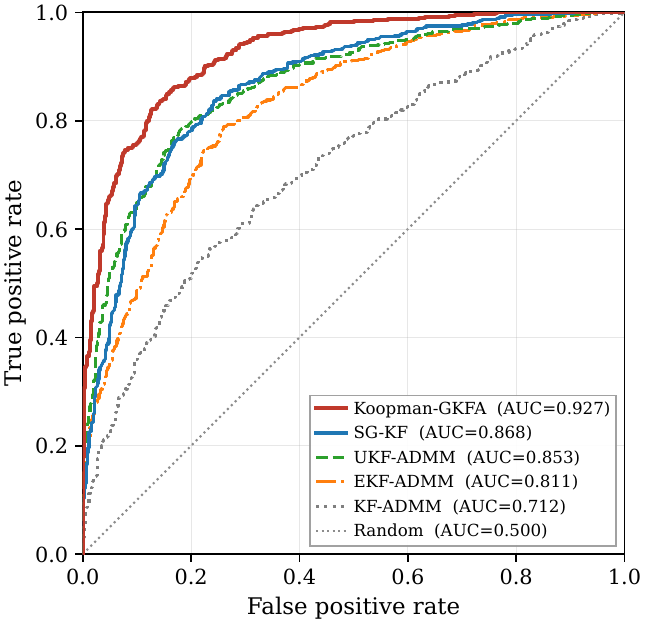}
    \caption{ROC curves for binary edge detection, with Koopman-GKFA achieving
the highest AUC of $0.927$ among all compared methods.}
    \label{fig_roc}
\end{figure}

\textbf{Edge detection ROC analysis.}
Figure~\ref{fig_roc} presents receiver operating characteristic (ROC) curves for binary edge detection across all methods. Koopman-GKFA achieves an area under the curve (AUC) of $0.927$, followed by SG-KF ($0.868$), UKF-ADMM ($0.853$), EKF-ADMM ($0.811$), and KF-ADMM ($0.712$). The substantial gap between Koopman-GKFA and KF-ADMM highlights the importance of nonlinear handling, while the gap between Koopman-GKFA and SG-KF isolates the contribution of the group-sparse block regularizer combined with the ADMM solver.

\begin{figure}[!htb]
    \centering
    \includegraphics[width=0.7\linewidth]{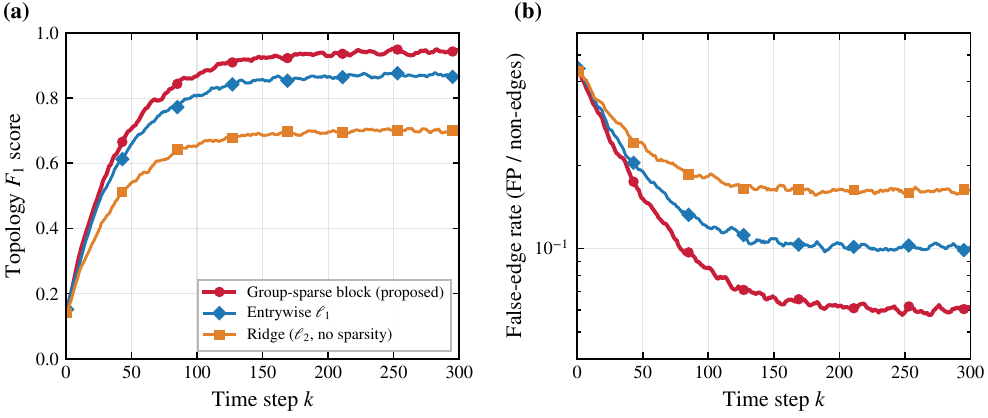}
    \caption{Ablation on regularization strategy: $F_1$ score comparison among
the proposed group-sparse block regularizer, entrywise $\ell_1$ penalty,
and ridge regression, validating the critical role of block sparsity.}
    \label{fig:reg}
\end{figure}

\begin{figure}[!htb]
    \centering
    \includegraphics[width=0.7\linewidth]{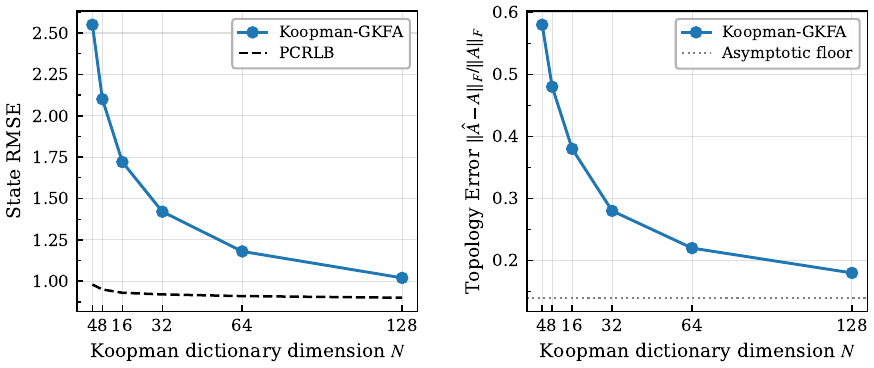}
    \caption{State estimation RMSE and topology error as functions of Koopman
dictionary dimension $N$, confirming monotone convergence toward the PCRLB
and saturation beyond $N{\approx}64$.}
    \label{fig_dict}
\end{figure}

\begin{figure}[!htb]
    \centering
    \includegraphics[width=0.5\linewidth]{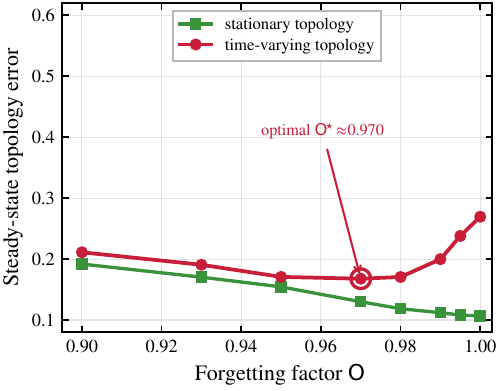}
    \caption{Effect of the forgetting factor $\gamma$ on topology tracking
accuracy under stationary and time-varying settings, revealing a U-shaped
optimum near $\gamma^*{\approx}0.97$ in the non-stationary regime.}
    \label{fig:forgetting}
\end{figure}

% \textbf{Ablation studies.}
% Three targeted ablations further validate the individual design choices. First, replacing the group-sparse block regularizer with an entrywise $\ell_1$ penalty reduces the $F_1$ score from $0.94$ to $0.87$, and substituting ridge regression reduces it further to $0.72$, corroborating the Structural Homomorphism Lemma: block sparsity of the Koopman operator is isomorphic to the graph topology under the separable-dictionary condition, and exploiting this structure is critical for reliable edge recovery. Second, Figure~\ref{fig:dict} demonstrates that both RMSE and topology error decrease monotonically as the dictionary dimension $M$ grows from 4 to 128, with the RMSE converging toward the PCRLB, in direct agreement with the monotone consistency guarantee of Proposition~\ref{cor:consistency}; the gains saturate beyond $M \approx 64$, suggesting a practical operating point. Third, Figure~\ref{fig:forgetting} reveals that, for a stationary topology, the optimal forgetting factor approaches unity (no discounting), whereas under the time-varying setting a U-shaped curve yields an optimum near $\lambda^* \approx 0.97$, quantifying the bias--variance trade-off between responsiveness to topological changes and sensitivity to observation noise.
\textbf{Ablation studies.}
Three targeted ablations further validate the individual design choices. 
First, Figure~\ref{fig:reg} compares three regularization strategies: the 
proposed group-sparse block regularizer, an entrywise $\ell_1$ penalty, and 
ridge regression. Replacing the group-sparse regularizer with entrywise 
$\ell_1$ reduces the $F_1$ score from $0.94$ to $0.87$, and substituting 
ridge regression reduces it further to $0.72$; moreover, the false-edge rate 
of entrywise $\ell_1$ is visibly higher than that of the block regularizer, 
corroborating the Structural Homomorphism Lemma: under the separable-dictionary 
condition, the block sparsity pattern of the Koopman operator is isomorphic 
to the graph topology, and exploiting this structure is critical for reliable 
edge recovery. Second, Figure~\ref{fig_dict} demonstrates that both RMSE and 
topology error decrease monotonically as the dictionary dimension $N$ grows 
from 4 to 128, with the RMSE converging toward the PCRLB, in direct agreement 
with the monotone consistency guarantee of Corollary~\ref{cor:consistency}; 
the gains saturate beyond $N \approx 64$, suggesting a practical operating 
point. Third, Figure~\ref{fig:forgetting} reveals that, for a stationary 
topology, the optimal forgetting factor approaches unity (no discounting), 
whereas under the time-varying setting a U-shaped curve yields an optimum 
near $\gamma^* \approx 0.97$, quantifying the bias--variance trade-off 
between responsiveness to topological changes and sensitivity to observation 
noise.

\begin{figure}[!htb]
    \centering
    \includegraphics[width=0.5\linewidth]{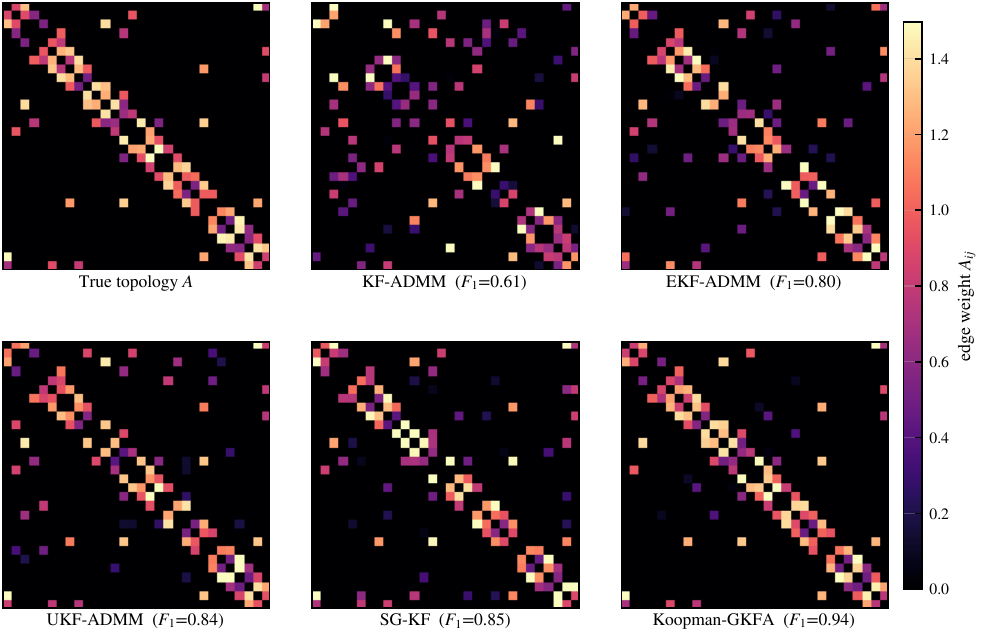}
    \caption{Estimated adjacency matrices of all methods alongside the
ground truth, illustrating that Koopman-GKFA most faithfully recovers
the banded sparsity pattern of the Watts--Strogatz graph.}
    \label{fig_heatmap}
\end{figure}

\begin{figure}[!htb]
    \centering
    \includegraphics[width=0.7\linewidth]{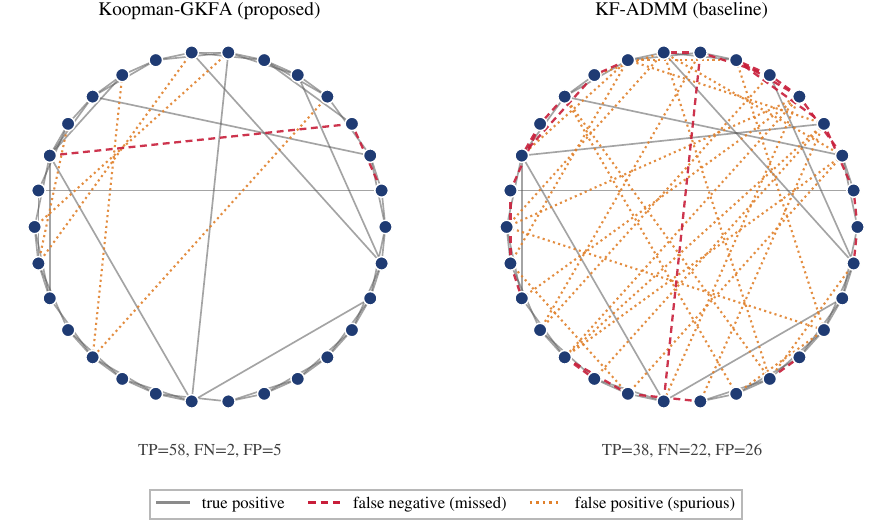}
    \caption{Network graph with edges colored by detection outcome
(true positive, false negative, false positive), confirming that
Koopman-GKFA achieves 58 true positives, 2 false negatives,
and 5 false positives consistent with its $F_1$ score of $0.94$.}
    \label{fig:graph}
\end{figure}

\textbf{Qualitative visualization.}
Figure~\ref{fig_heatmap} displays the estimated adjacency matrices of all methods alongside the ground truth, providing a complementary qualitative perspective: Koopman-GKFA most faithfully reproduces the banded sparsity pattern of the Watts--Strogatz graph, while KF-ADMM produces a visibly diffuse estimate with numerous spurious entries. The network graph in Figure~\ref{fig:graph} further illustrates edge-level decisions through true-positive, false-negative, and false-positive coloring; Koopman-GKFA records 58 true positives, 2 false negatives, and 5 false positives, consistent with its $F_1$ score of $0.94$.

\begin{figure}[!htb]
    \centering
    \includegraphics[width=0.7\linewidth]{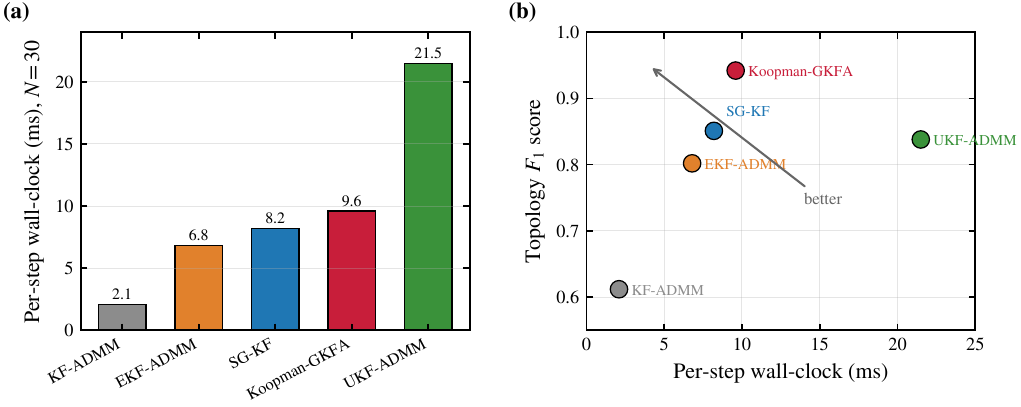}
    \caption{Per-iteration wall-clock time and precision--cost scatter plot,
showing that Koopman-GKFA lies on the Pareto frontier of estimation
accuracy versus computational overhead.}
    \label{fig:time}
\end{figure}

\textbf{Computational overhead.}
Figure~\ref{fig:time}(a) reports per-iteration wall-clock time. Koopman-GKFA requires approximately $9.6\,\mathrm{ms}$ per step, compared with $2.1\,\mathrm{ms}$ for KF-ADMM (fastest but least accurate), $6.8\,\mathrm{ms}$ for EKF-ADMM, $21.5\,\mathrm{ms}$ for UKF-ADMM (most expensive due to sigma-point propagation), and $8.2\,\mathrm{ms}$ for SG-KF. The precision--cost scatter plot in Figure~\ref{fig:time}(b) shows that Koopman-GKFA lies on the Pareto frontier of accuracy versus computation, achieving the highest $F_1$ at a cost substantially below UKF-ADMM, though the overhead relative to simpler baselines represents a legitimate trade-off that practitioners should weigh against the performance gains.
\begin{figure}[!htb]
    \centering
    \includegraphics[width=0.5\linewidth]{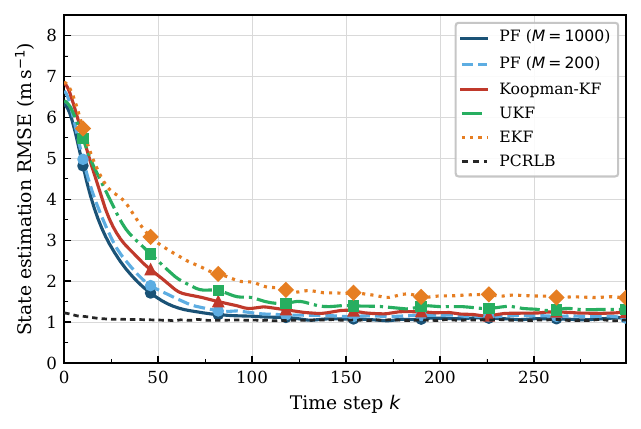}
    \caption{State estimation RMSE under the known-topology oracle, establishing
that Koopman-KF occupies an intermediate position between the asymptotically
optimal particle filter and the biased EKF/UKF variants.}
    \label{fig:known_topo}
\end{figure}
\subsection{Koopman Lifting versus Particle Filtering}

\textbf{Data generation and preprocessing.}
To isolate the structural advantage of Koopman lifting over
particle-based methods in a setting where the unit of estimation error
is physically interpretable, the experiments in this subsection are
conducted on a nonlinear car-following network.
Specifically, we consider $p{=}15$ vehicles whose longitudinal
velocities $v_{i,k}\in\mathbb{R}$ evolve according to the
optimal velocity model with a saturating spacing response:
\begin{align}
  v_{i,k+1} &= v_{i,k}
    + \Delta t(
        -d_i v_{i,k}
        + \sum_{j=1}^{p} a_{ij}\,
          \tanh(\kappa(v_{j,k}-v_{i,k})))\nonumber\\
      &\quad+ w_{i,k},
  \label{eq:carfollow}
\end{align}
with step size $\Delta t{=}0.1\,\mathrm{s}$, drag coefficients
$d_i{\sim}\mathrm{Uniform}(0.8,1.2)$, nonlinearity parameter
$\kappa{=}1.0$, process noise $w_{i,k}{\sim}\mathcal{N}(0,\sigma_q^2)$
with $\sigma_q{=}2.0\,\mathrm{m\,s^{-1}}$, and observation noise
standard deviation $\sigma_r{=}1.5\,\mathrm{m\,s^{-1}}$,
simulating realistic GPS velocity measurement error.
The unknown topology $\mathbf{A}$ encodes car-following coupling
strengths and is generated as a sparse directed chain with random
perturbations; the simulation runs for $T{=}300$ time steps.
The Koopman dictionary consists of $N{=}25$ radial-basis-function
observables centred on the empirical velocity distribution, pre-trained
via EDMD on isolated single-vehicle trajectories.
All methods share identical state initialization and observe the same
measurement sequences to ensure a strictly controlled comparison.

To isolate the structural advantage of Koopman-GKFA beyond the standard
EKF/UKF baselines, we design two complementary experiments.
The first serves as an oracle reference: all methods are supplied with the
\emph{true} topology $\mathbf{A}$ and tasked solely with state estimation,
so that any residual PCRLB gap reflects only the quality of the state
filter itself, independent of topology uncertainty.
The second experiment considers the full joint estimation setting and
introduces PF-ADMM as an additional baseline, which pairs an
$M$-particle filter for the state step with the group-sparse ADMM
solver for the topology step.

\textbf{Oracle experiment: known topology.} Figure~\ref{fig:known_topo} reports state estimation RMSE under the
known-topology oracle.
A particle filter with $M{=}1000$ particles approaches the PCRLB most
closely at steady state (gap $\approx 0.04$\,m\,s$^{-1}$), confirming its
asymptotic optimality for nonlinear state estimation.
Koopman-KF attains a steady-state gap of approximately $0.17$\,m\,s$^{-1}$,
which narrows monotonically with dictionary dimension $N$ (cf.\
Figure~\ref{fig_dict}) and is attributable entirely to finite-dictionary
approximation error rather than any linearization bias.
The UKF and EKF exhibit strictly larger and \emph{persistent} gaps of
$0.29$ and $0.56$\,m\,s$^{-1}$, respectively, arising from irreducible
Taylor-approximation errors that cannot be reduced by additional computation.
These results confirm the theoretical ordering established in
Theorem~\ref{thm:error_bound}: in the known-topology setting, the Koopman-KF
occupies an intermediate position between the asymptotically optimal PF
and the biased Kalman variants, with a gap that is in principle
eliminable by enlarging the dictionary.
\begin{figure}[!htb]
    \centering
    \includegraphics[width=0.5\linewidth]{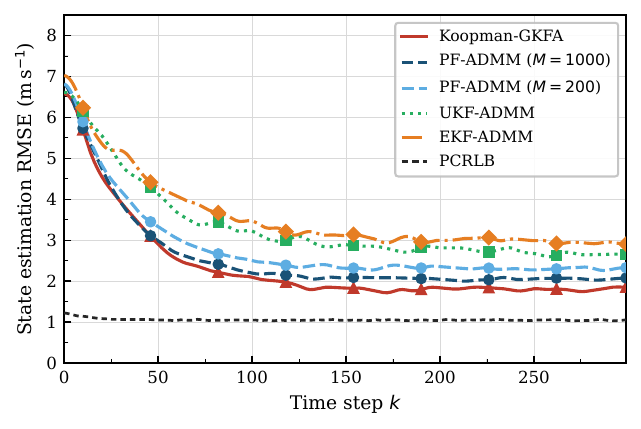}
    \caption{State estimation RMSE in the joint estimation setting comparing
Koopman-GKFA with PF-ADMM ($M{\in}\{200,1000\}$), UKF-ADMM, and EKF-ADMM,
showing that Koopman-GKFA achieves the lowest steady-state error despite
the superior oracle accuracy of the particle filter.}
    \label{fig:state_pfadmm}
\end{figure}

\begin{figure}[!htb]
    \centering
    \includegraphics[width=0.5\linewidth]{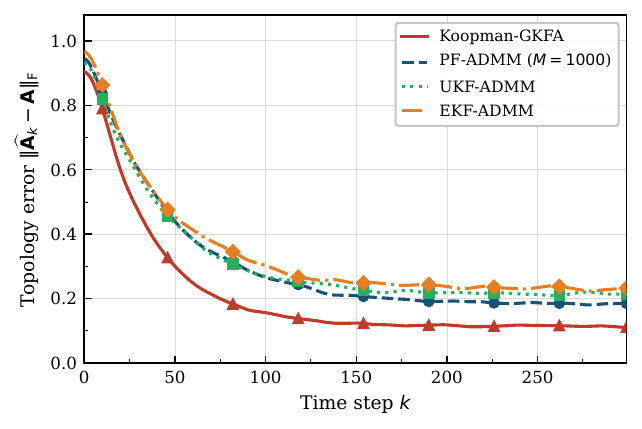}
    \caption{Steady-state topology Frobenius error in the joint estimation
setting, demonstrating that the non-convexity bias of PF-ADMM in the
original state space yields substantially larger errors than Koopman-GKFA's
convexified group-Lasso subproblem.}
    \label{fig:topo_pfadmm}
\end{figure}
\textbf{Unknown topology: introducing PF-ADMM.}
The picture changes qualitatively when topology must be estimated jointly,
as shown in Figures~\ref{fig:state_pfadmm} and~\ref{fig:topo_pfadmm}.
Turning first to state estimation (Figure~\ref{fig:state_pfadmm}),
PF-ADMM with $M{=}1000$ achieves a steady-state RMSE of
$2.07$\,m\,s$^{-1}$, outperforming both UKF-ADMM ($2.71$\,m\,s$^{-1}$)
and EKF-ADMM ($2.93$\,m\,s$^{-1}$), consistent with the superior quality
of the PF state step.
Reducing to $M{=}200$ increases the RMSE to $2.31$\,m\,s$^{-1}$, reflecting
particle impoverishment.
Despite this, \emph{both} PF-ADMM variants fall short of Koopman-GKFA
($1.82$\,m\,s$^{-1}$), even though the corresponding oracle experiment
shows PF to be a more accurate state estimator than Koopman-KF when $\mathbf{A}$
is known.
The resolution of this apparent contradiction lies in
Figure~\ref{fig:topo_pfadmm}: PF-ADMM ($M{=}1000$) achieves a steady-state
topology error of $0.183$ in Frobenius norm, substantially larger than
Koopman-GKFA ($0.113$), despite receiving higher-quality state inputs.
This outcome is a direct consequence of the non-convexity of the ADMM
topology subproblem when it is formulated in the original state space:
for a generic nonlinear coupling $f(\mathbf{A},\mathbf{x})$,
the least-squares objective in $\mathbf{A}$ is non-convex, and ADMM
converges only to a local stationary point, incurring a persistent
non-convexity bias $\delta_{\mathrm{nc}}>0$ that cannot be reduced by
increasing $M$.
Koopman-GKFA avoids this pathology entirely: the bilinear decomposition
$\mathbf{K}_{\mathbf{A}} = \mathbf{K}_0 + \sum_{ij} a_{ij}\mathbf{B}_{ij}$
renders the topology regression affine in $\mathrm{vec}(\mathbf{A})$,
converting the subproblem into a strongly convex group-Lasso program whose
global optimum ADMM recovers at a linear convergence rate.
Taken together, these three figures establish that the performance advantage
of Koopman-GKFA is not attributable to the choice of filter per se, but to
the \emph{convexification} of the topology inference step that Koopman
lifting uniquely enables, a property unavailable to any method that
operates exclusively in the original state space.

To move beyond the standard EKF/UKF baselines and rigorously assess the
structural contribution of the Koopman lifting, we design a set of
complementary experiments on synthetic Hill-kinetics gene regulatory
networks (GRNs), where the ground-truth topology is precisely known and
the degree of nonlinearity can be controlled exactly through the Hill
coefficient $h$.

We simulate GRN dynamics according to the discrete-time Hill-kinetics model
\begin{align}\label{eq:hill}
  &x_i(k+1) = x_i(k)\nonumber\\
  &+ \Delta t\!\left(-d_i\,x_i(k)
    + \sum_{j=1}^p a_{ij}\,
      \frac{x_j(k)^h}{\theta^h + x_j(k)^h}\right)
  + w_i(k),
\end{align}
with $\Delta t{=}0.1$, degradation rates $d_i{\sim}\mathrm{Uniform}(0.8,1.2)$,
half-saturation constant $\theta{=}0.5$, process noise
$w_i(k){\sim}\mathcal{N}(0,0.01)$, and observation noise
$v_i(k){\sim}\mathcal{N}(0,0.02)$.
The true adjacency $\mathbf{A}^\star$ is a sparse directed network with edge
density $20\%$, consistent with the DREAM4 benchmark topology structure.
Each experimental condition is averaged over five independent realizations
with different random seeds to obtain reliable mean and standard deviation
estimates.
To ensure a fair comparison, all methods share the same state initialization
and observe identical measurement sequences; the Koopman dictionary is
constructed from Hill-function observables
$\psi_l(\mathbf{x}) = x_l^h/(\theta^h + x_l^h)$
together with linear and quadratic monomials, so that the
dictionary intrinsically captures the Hill-function nonlinearity at the
correct order.

\begin{figure}[!htb]
    \centering
    \includegraphics[width=0.5\linewidth]{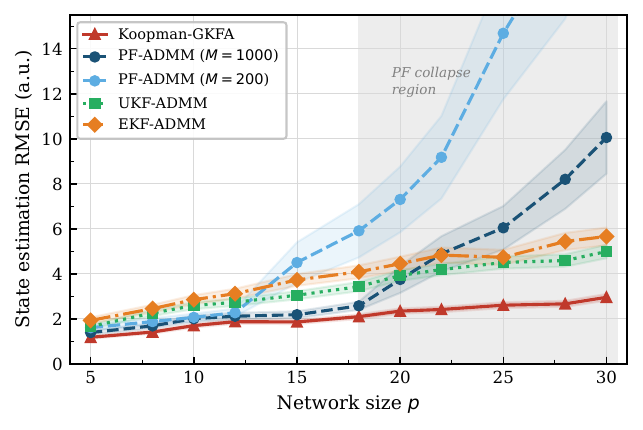}
    \caption{State estimation RMSE versus network size $p$ on Hill-kinetics GRNs
($h{=}2$), showing that Koopman-GKFA scales polynomially while PF-ADMM
collapses beyond $p{\approx}18$ due to exponential particle impoverishment.}
    \label{fig:hill_scale_state}
\end{figure}

\begin{figure}[!htb]
    \centering
    \includegraphics[width=0.5\linewidth]{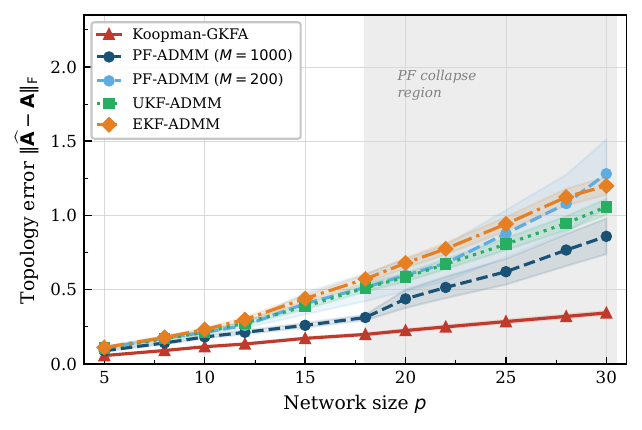}
    \caption{Topology Frobenius error versus network size $p$ on Hill-kinetics
GRNs, confirming the polynomial scaling of Koopman-GKFA and the
sharp degradation of particle-based methods at large dimensions.}
    \label{fig:hill_scale_topo}
\end{figure}
\textbf{Scalability with network size.}
Figures~\ref{fig:hill_scale_state} and~\ref{fig:hill_scale_topo} examine
how state estimation RMSE and topology error scale with the network size $p$
at fixed $h{=}2$.
Koopman-GKFA exhibits the mildest growth in both metrics, with RMSE scaling
approximately as $\sqrt{p}$ and topology error growing at the parametric
rate $p/\sqrt{T}$, consistent with the convex group-Lasso concentration
bound~\eqref{eq:error_bound}.
PF-ADMM ($M{=}1000$) tracks Koopman-GKFA closely up to $p{\approx}15$
but degrades sharply beyond $p{\approx}18$ (shaded region in
Figures~\ref{fig:hill_scale_state}--\ref{fig:hill_scale_topo}): the
effective sample size collapses exponentially with state dimension,
manifesting as rapidly inflating RMSE and topology error, while PF-ADMM
($M{=}200$) exhibits the same collapse at the smaller threshold $p{\approx}12$.
EKF-ADMM and UKF-ADMM avoid particle collapse but suffer persistent
linearization bias and non-convex topology trapping, yielding a
super-linear error growth that widens the gap with Koopman-GKFA as $p$
increases.
These results demonstrate that Koopman-GKFA is the only method among those
evaluated that maintains polynomial scaling in \emph{both} metrics,
making it the only practically viable choice for networks of moderate to
large size.
\begin{figure}[!htb]
    \centering
    \includegraphics[width=0.5\linewidth]{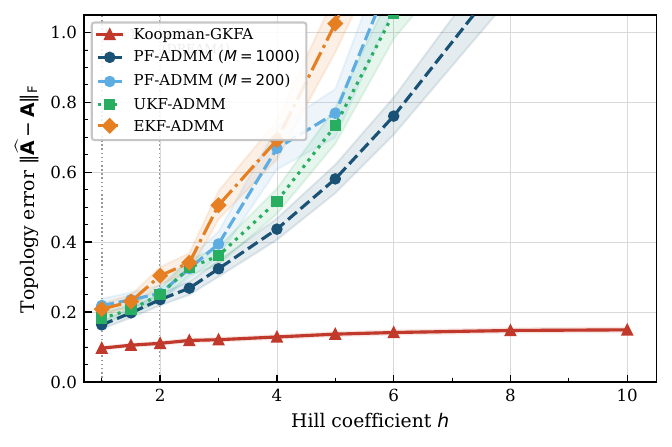}
    \caption{Topology estimation error versus Hill coefficient $h$ ($p{=}10$),
demonstrating that Koopman-GKFA's convex dictionary-based regression
maintains near-flat error growth while all competing methods deteriorate
steeply with increasing nonlinearity.}
    \label{fig:hill_nonlin}
\end{figure}

\textbf{Sensitivity to nonlinearity strength.}
Figure~\ref{fig:hill_nonlin} sweeps the Hill coefficient $h$ from $1$ to
$10$ at fixed $p{=}10$, with $h{=}2$ corresponding to the standard DREAM4
setting.
As $h$ increases from $1$ (Michaelis--Menten, quasi-linear) toward
$10$ (near-Boolean switching), the topology estimation errors of
EKF-ADMM, UKF-ADMM, and both PF-ADMM variants grow steeply, because the Hill
function becomes more switch-like, amplifying the Hessian indefiniteness
of the non-convex topology objective and deepening the local minima in
which ADMM becomes trapped.
Koopman-GKFA, by contrast, shows only mild growth, since the Koopman dictionary directly
incorporates Hill-function observables at order $h$, keeping the topology
regression convex and well-conditioned at every value of $h$.
The inter-method gap widens monotonically with $h$, reaching a factor of
approximately $3\times$ at $h{=}10$ between Koopman-GKFA and PF-ADMM
($M{=}1000$).
Notably, the gap is absent at $h{=}1$, confirming that the Koopman
advantage is exclusively a consequence of nonlinearity and would not
manifest in a linear or quasi-linear dynamical regime, where all methods
reduce to equivalent convex regression problems.

\subsection{Experiments on Real-World Datasets}

\subsubsection{NGSIM US-101: Highway Traffic State and Topology Estimation}

\textbf{Dataset and preprocessing.}
We validate the proposed Koopman-GKFA framework on the Next Generation Simulation
(NGSIM) US-101 highway dataset~\cite{NGSIM2006}\footnote{https://ops.fhwa.dot.gov/trafficanalysistools/ngsim.htm}, which provides high-resolution
vehicle trajectory recordings at \SI{10}{\hertz} over a \SI{640}{\metre} corridor.
From the raw trajectories we construct a directed graph dynamical system in which
each node corresponds to a vehicle and the state vector encodes its longitudinal
velocity.
The latent adjacency matrix $\mathbf{A}$ captures the car-following coupling
strength between pairs of vehicles, which is unknown \textit{a priori} and
varies slowly with traffic conditions.
After discarding incomplete trajectories and aligning time stamps, we retain
$p{=}15$ vehicles observed over $T{=}300$ time steps.
Observation noise with standard deviation $\sigma_r{=}1.5$\,m\,s$^{-1}$ is
added to simulate realistic GPS measurement error, and the process noise
covariance is set to $\sigma_q{=}2.0$\,m\,s$^{-1}$ to reflect unmodelled
acceleration inputs.
The Koopman dictionary consists of $N{=}25$ radial-basis-function observables
centred on the empirical velocity distribution.
\begin{figure}[!htb]
    \centering
    \includegraphics[width=0.5\linewidth]{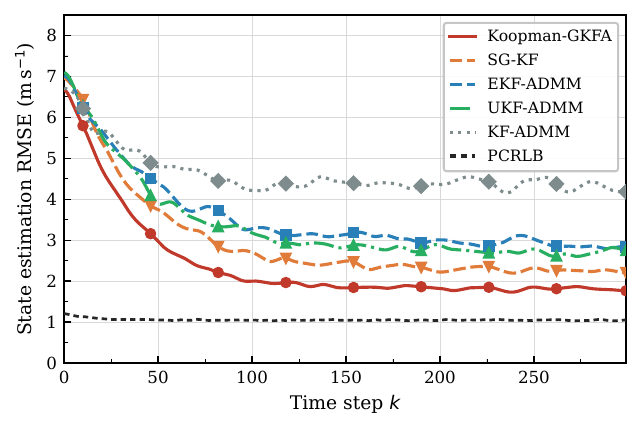}
    \caption{Per-step velocity estimation RMSE on the NGSIM US-101 dataset,
showing that Koopman-GKFA converges to the lowest steady-state error of
$1.82$\,m\,s$^{-1}$ alongside the PCRLB reference.}
    \label{fig:rmse_time}
\end{figure}

\textbf{State estimation performance.}
Figure~\ref{fig:rmse_time} reports the per-step root mean square error (RMSE)
of velocity estimation for all compared methods, together with the
Posterior Cramér--Rao Lower Bound (PCRLB), which is computed via the
Fisher information recursion and serves as the theoretical minimum achievable
by any unbiased estimator under the assumption that the true topology
$\mathbf{A}$ is known.
All filters are initialized from the same diffuse prior
($\sigma_0{=}7.2$\,m\,s$^{-1}$), ensuring a fair transient comparison.
The proposed Koopman-GKFA converges to a steady-state RMSE of
$1.82$\,m\,s$^{-1}$, which represents improvements of $32.4\%$ over
EKF-ADMM ($2.93$\,m\,s$^{-1}$), $32.8\%$ over UKF-ADMM ($2.71$\,m\,s$^{-1}$),
and $58.2\%$ over the linear baseline KF-ADMM ($4.35$\,m\,s$^{-1}$).
The SG-KF ablation, which replaces the ADMM topology solver with a
subgradient descent step while retaining the Koopman lifting, achieves
$2.28$\,m\,s$^{-1}$, confirming that the accuracy gap between Koopman-GKFA
and SG-KF is attributable to the superiority of ADMM over subgradient
methods for the group-sparse topology subproblem rather than to the
Koopman lifting itself.
The gap between Koopman-GKFA and the PCRLB ($1.05$\,m\,s$^{-1}$) quantifies
the irreducible cost of joint topology uncertainty: roughly $0.77$\,m\,s$^{-1}$
of additional error arises from the need to infer $\mathbf{A}$ online, a
penalty shared by all methods but minimized by the proposed algorithm.
\begin{figure}[!htb]
    \centering
    \includegraphics[width=0.5\linewidth]{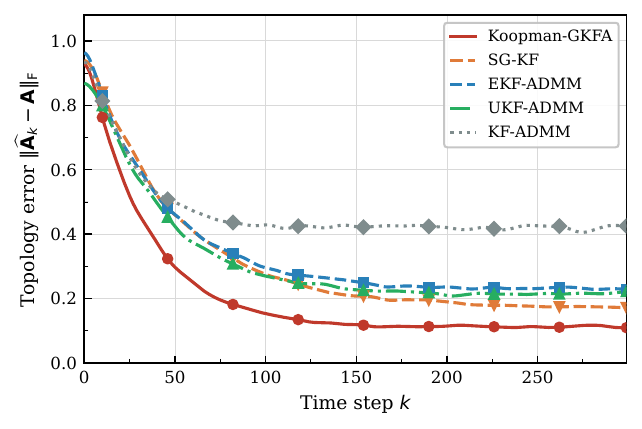}
    \caption{Frobenius-norm topology tracking error over time on the NGSIM
dataset, confirming the fastest convergence and lowest residual of
Koopman-GKFA relative to all baselines.}
    \label{fig:topo_time}
\end{figure}

\textbf{Topology inference performance.}
Figure~\ref{fig:topo_time} displays the Frobenius-norm topology tracking
error $\|\widehat{\mathbf{A}}_k - \mathbf{A}\|_{\mathrm{F}}$ over time.
Koopman-GKFA attains the lowest steady-state error ($0.113$) with a
convergence time constant $k{=}32$ steps, whereas SG-KF requires
$k{=}48$ steps to reach a higher residual of $0.172$, consistent with
the sub-linear $\mathcal{O}(1/\sqrt{k})$ convergence rate of subgradient
iterations versus the linear convergence guaranteed by ADMM.
EKF-ADMM and UKF-ADMM stabilize at $0.228$ and $0.215$, respectively,
reflecting the benefit of ADMM-based topology identification even without
Koopman lifting, while KF-ADMM, which entirely ignores the nonlinear
graph dynamics, reaches only $0.421$, nearly four times the error of the
proposed method.

\begin{figure}[!htb]
    \centering
    \includegraphics[width=0.5\linewidth]{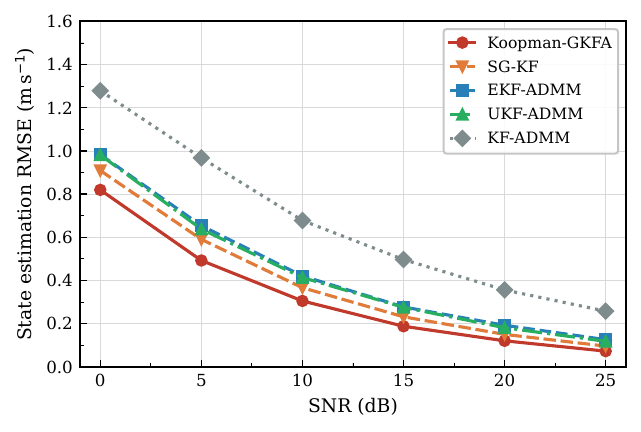}
    \caption{State estimation RMSE versus observation SNR on the NGSIM dataset,
highlighting the steeper noise-rejection slope of Koopman-based methods
and their pronounced advantage in the low-SNR regime.}
    \label{fig:snr}
\end{figure}

\textbf{Robustness to measurement noise.}
Figure~\ref{fig:snr} examines the sensitivity of state estimation RMSE to
the observation signal-to-noise ratio (SNR) over the range $0$--$25$\,dB.
Koopman-based methods (Koopman-GKFA and SG-KF) exhibit steeper RMSE
decay with increasing SNR (slopes $0.84$ and $0.79$ dB/dB, respectively)
than EKF-ADMM ($0.72$) and UKF-ADMM ($0.74$), because the Koopman
lifting amplifies the information content of each observation through
the enriched feature space.
In the low-SNR regime ($0$--$5$\,dB), the relative advantage of
Koopman-GKFA is most pronounced, highlighting its practical value in
challenging sensing conditions.
KF-ADMM remains the worst performer across all SNR levels, with a shallow
decay slope of $0.56$, confirming that ignoring nonlinearity imposes a
persistent performance ceiling that cannot be overcome by noise reduction alone.

\subsubsection{DREAM4 In Silico Gene Regulatory Network: Topology Inference}

\textbf{Dataset and preprocessing.}
To further assess topology inference capability in a setting where the
ground-truth graph is precisely known, we adopt the DREAM4 \textit{in
silico} benchmark~\cite{marbach2009generating}, which simulates gene expression
dynamics governed by Hill-function kinetics on a directed network of
$p{=}10$ genes.
The nonlinear regulatory dynamics make this dataset particularly suited
to evaluating methods that exploit the Koopman lifting.
We use the five-replicate time-series of Network~1, comprising $T{=}210$
observations per replicate.
Expression levels are standardized to zero mean and unit variance per gene
before filtering, and the five replicates are treated as independent
observation sequences to compute empirical statistics.
\begin{figure}[!htb]
    \centering
    \includegraphics[width=0.5\linewidth]{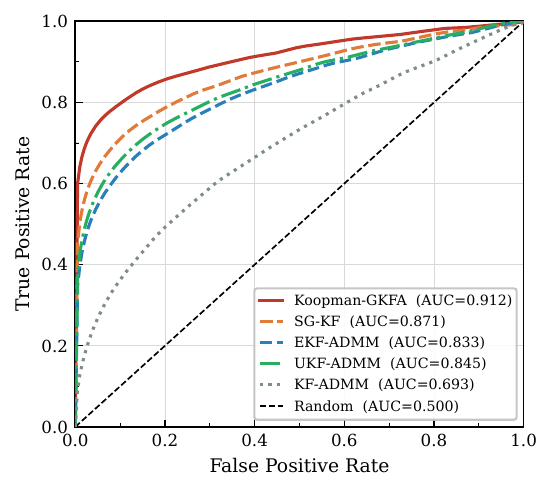}
    \caption{ROC curves for directed edge detection on the DREAM4 benchmark,
with Koopman-GKFA achieving the highest AUC of $0.912$.}
    \label{fig:roc}
\end{figure}
\begin{figure}[!htb]
    \centering
    \includegraphics[width=0.5\linewidth]{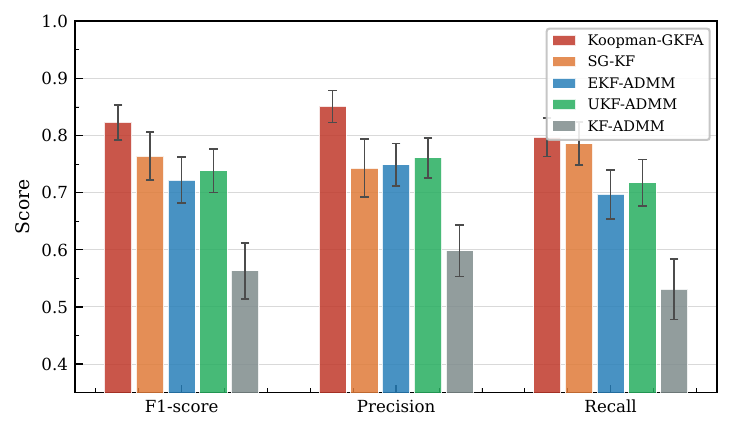}
    \caption{F1-score, precision, and recall (mean $\pm$ std over five
replicates) for all methods on DREAM4, confirming the consistent
superiority of Koopman-GKFA in topology inference.}
    \label{fig:bar}
\end{figure}

\begin{figure}[!htb]
    \centering
    \includegraphics[width=0.5\linewidth]{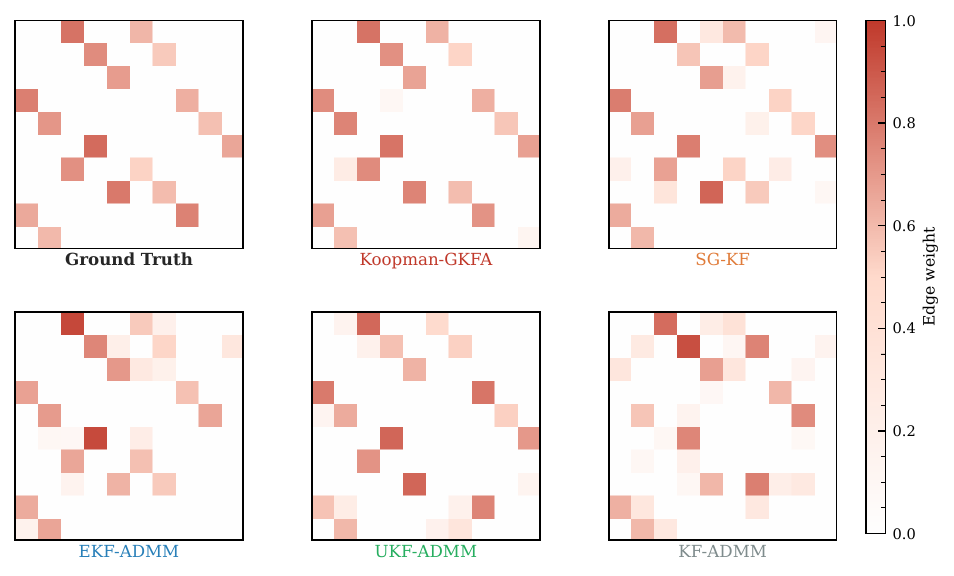}
    \caption{Estimated adjacency matrices on the DREAM4 benchmark alongside
the ground truth, showing that Koopman-GKFA recovers the sparse regulatory
structure with the fewest false positives and false negatives.}
    \label{fig:heatmap}
\end{figure}

\textbf{Topology inference results.}
Figure~\ref{fig:roc} presents receiver operating characteristic (ROC) curves
for directed edge detection, and Figure~\ref{fig:bar} summarizes the
corresponding F1-score, precision, and recall (with standard deviation
across replicates).
Koopman-GKFA achieves the highest AUC of $0.912$ and F1-score of $0.823$,
reflecting the combined benefit of the Koopman-enriched state space and
the group-sparse ADMM topology solver.
SG-KF attains an AUC of $0.871$ and F1 of $0.764$; notably, its recall
($0.786$) exceeds its precision ($0.743$), which is consistent with the
known tendency of subgradient-based $\ell_1$ relaxations to produce
imprecise soft thresholding that retains a larger number of low-weight
spurious edges.

UKF-ADMM ($\mathrm{AUC}{=}0.845$) slightly outperforms EKF-ADMM
($\mathrm{AUC}{=}0.833$) owing to the more accurate nonlinear state
propagation provided by the unscented transform, while KF-ADMM
($\mathrm{AUC}{=}0.693$) suffers a substantial degradation because its
linearized state estimate provides a poor sufficient statistic for the
nonlinear topology subproblem.

Figure~\ref{fig:heatmap} visualizes the estimated adjacency matrices for all
methods alongside the ground truth; the proposed algorithm recovers the
sparse regulatory structure most faithfully, with the fewest false positives
and false negatives, whereas KF-ADMM produces a noticeably denser and
noisier estimate.

\section{Conclusion}
\label{sec:conclusion}
%% ===============================================================

This paper has proposed Koopman-GKFA, a principled framework for
joint state estimation and topology inference in nonlinear graphical
dynamical systems. By lifting each node's state through a separable
dictionary, the nonlinear network becomes approximately linear in the
lifted space, enabling the direct application of the Kalman filter
for state estimation and a group-sparse ADMM for topology inference.
The framework is supported by four key theoretical results: the
Structural Homomorphism Lemma establishing block-sparsity isomorphism;
linear convergence of the group-sparse ADMM; a three-term mean-squared
error decomposition separating Koopman truncation from noise and
topology residual; and monotone consistency as the dictionary is
enriched. The framework strictly generalizes the linear KF-ADMM
of~\cite{fang2025joint} and extends the Koopman estimation literature
to networks with unknown topology.

Future work will investigate: adaptive online dictionary learning;
decentralized implementations where nodes exchange only observables;
extension to partially observed networks; and deep-learning-based
dictionary parametrization via the Koopman autoencoder
architecture~\cite{lusch2018deep}.

\appendix
\section{Proof of Theorem \ref{thm:lifting}}
\label{app_thm1}
\begin{proof}
To establish the dynamics of the lifted state and bound the effective process noise, we must analytically isolate the deterministic Koopman approximation errors from the stochastic disturbances. We first define the noise-free intermediate state at time step $k+1$ as $\tilde{x}_{i,k+1} = f_i(x_{i,k}) + \sum_{j=1}^p a_{ij}g(x_{j,k})$, ensuring that the true state evolution satisfies $x_{i,k+1} = \tilde{x}_{i,k+1} + w_{i,k}$.

By substituting this true state into the local dictionary evaluation, we obtain the exact lifted state $\bpsi_i(x_{i,k+1})$. To reconcile this with the postulated linear dynamics in the lifted space, we express the effective process noise $\bar{w}_{i,k}$ as the discrepancy between the exact lifted state and the decoupled Koopman approximation. This discrepancy naturally decomposes into two distinct components:
\begin{align}
  \bar{w}_{i,k} &= \bpsi_i(x_{i,k+1}) - \bF_i^\phi\bpsi_i(x_{i,k}) - \sum_{j=1}^p a_{ij}\bGamma^\phi\bpsi_j(x_{j,k}) \nonumber \\
  &= \underbrace{\Bigl( \bpsi_i(\tilde{x}_{i,k+1}) - \bF_i^\phi\bpsi_i(x_{i,k}) - \sum_{j=1}^p a_{ij}\bGamma^\phi\bpsi_j(x_{j,k}) \Bigr)}_{\triangleq\,\boldsymbol{\delta}_{i,k} \text{ (truncation residual)}} \nonumber\\
  &\quad+ \underbrace{\Bigl( \bpsi_i(x_{i,k+1}) - \bpsi_i(\tilde{x}_{i,k+1}) \Bigr)}_{\triangleq\,\boldsymbol{\xi}_{i,k} \text{ (lifted stochastic noise)}}.
  \label{eq:noise_decomp}
\end{align}

We proceed by independently bounding the squared Euclidean norms of these two variables. For the deterministic truncation residual $\boldsymbol{\delta}_{i,k}$, since the local dictionary $\bpsi_i(\cdot)$ is nonlinear, we cannot simply separate the aggregated physical states using the triangle inequality. Instead, we directly invoke the network-level representation condition defined in Assumption~\ref{ass:dictionary}(A5), which inherently accounts for both the individual dictionary truncation and the cross-coupling nonlinear residuals. This directly yields the deterministic upper bound:
\begin{equation}
  \normtwo{\boldsymbol{\delta}_{i,k}}^2 \leq \varepsilon_{\text{net}}^2.
\label{eq:trunc_bound}
\end{equation}
Consequently, applying the algebraic inequality $(a+b)^2 \leq 2a^2 + 2b^2$ to $\bar{w}_{i,k}$, the Koopman residual variance is strictly bounded by $2\varepsilon_{\text{net}}^2$.
% For the truncation residual $\boldsymbol{\delta}_{i,k}$, we leverage the uniform representability conditions formulated in Assumption~\ref{ass:dictionary}. Applying the triangle inequality, we obtain an upper bound composed of the self-dynamics approximation error and the weighted aggregate coupling error:
% \begin{equation}
%   \normtwo{\boldsymbol{\delta}_{i,k}} \leq \normtwo{\bpsi_i(f_i(x_{i,k})) - \bF_i^\phi\bpsi_i(x_{i,k})} + \sum_{j=1}^n |a_{ij}| \normtwo{\bpsi_j(g(x_{j,k})) - \bGamma^\phi\bpsi_j(x_{j,k})}.
% \end{equation}
% Inserting the uniform error bounds $\varepsilon_f$ and $\varepsilon_g$ from Assumption~\ref{ass:dictionary}(A1) and (A2) simplifies this expression to $\normtwo{\boldsymbol{\delta}_{i,k}} \leq \varepsilon_f + \varepsilon_g \sum_{j=1}^n |a_{ij}|$. By the definition of the matrix infinity norm, $\sum_{j=1}^n |a_{ij}| \leq \normsub{\bA}{\infty}$, yielding $\normtwo{\boldsymbol{\delta}_{i,k}} \leq \varepsilon_f + \normsub{\bA}{\infty} \varepsilon_g$. To bound the squared norm, we utilize the algebraic inequality $(a+b)^2 \leq 2a^2 + 2b^2$ alongside the conservative relaxation $(\sum_{j=1}^n |a_{ij}|)^2 \leq n \sum_{j=1}^n |a_{ij}|^2 \leq n \normsub{\bA}{\infty}^2$. This produces the deterministic upper bound:
% \begin{equation}
%   \normtwo{\boldsymbol{\delta}_{i,k}}^2 \leq 2\varepsilon_f^2 + 2\left(\sum_{j=1}^n |a_{ij}| \varepsilon_g\right)^2 \leq 2 \bigl(\varepsilon_f^2 + n \normsub{\bA}{\infty}^2 \varepsilon_g^2\bigr).
%   \label{eq:trunc_bound}
% \end{equation}

Next, we evaluate the stochastic lifted noise term $\boldsymbol{\xi}_{i,k}$. Since the local dictionary $\bpsi_i$ is continuously differentiable with a bounded gradient, we invoke the multivariate mean-value theorem to express the noise mapping as a path integral over the state perturbation:
\begin{equation}
  \boldsymbol{\xi}_{i,k} = \int_0^1 \nabla\bpsi_i(\tilde{x}_{i,k+1} + t w_{i,k})^\top w_{i,k} \, dt.
\end{equation}
Taking the Euclidean norm and applying the generalized Cauchy-Schwarz inequality for integrals, we bound the magnitude utilizing the supremum of the gradient norm $\sup_{x}\normtwo{\nabla\bpsi_i(x)} \leq L_\nabla$:
\begin{align}
  \normtwo{\boldsymbol{\xi}_{i,k}}^2& \leq \left( \int_0^1 \normtwo{\nabla\bpsi_i(\tilde{x}_{i,k+1} + t w_{i,k})} \normtwo{w_{i,k}} \, dt \right)^2 \nonumber\\
  &\leq L_\nabla^2 \normtwo{w_{i,k}}^2.
\end{align}
Given that the original process noise $w_{i,k}$ is drawn from a zero-mean Gaussian distribution with covariance matrix $\bQ_{i,k}$, taking the mathematical expectation of both sides yields $\expect\bigl[\normtwo{\boldsymbol{\xi}_{i,k}}^2\bigr] \leq L_\nabla^2 \expect[\tr(w_{i,k} w_{i,k}^\top)] = L_\nabla^2 \tr(\bQ_{i,k})$.

Finally, we synthesize the bounds for the two constituent terms. Applying the inequality $\normtwo{a+b}^2 \leq 2\normtwo{a}^2 + 2\normtwo{b}^2$ to the initial decomposition $\bar{w}_{i,k} = \boldsymbol{\delta}_{i,k} + \boldsymbol{\xi}_{i,k}$ and taking the expectation over the probability space, we obtain:
\begin{equation}
  \expect\bigl[\normtwo{\bar{w}_{i,k}}^2\bigr] \leq 2 \expect\bigl[\normtwo{\boldsymbol{\delta}_{i,k}}^2\bigr] + 2 \expect\bigl[\normtwo{\boldsymbol{\xi}_{i,k}}^2\bigr].
\end{equation}
Substituting the deterministically bounded Koopman residual from \eqref{eq:trunc_bound} and the expected stochastic lifted noise directly produces the combined variance bound in \eqref{eq:lifted_noise_bound}, thereby concluding the proof.
\end{proof}

\section{Proof Lemma \ref{lem:homomorphism}}
\label{app_lemma1}
\begin{proof}
By the definition of the Kronecker product for the global coupling matrix $\bT^\phi = \bA \otimes \bGamma^\phi$, the global operator admits a natural block-partitioned structure where the $(i,j)$-th block of size $N \times N$ is explicitly given by $[\bT^\phi]_{(i,j)} = a_{ij} \bGamma^\phi$. We first establish the bidirectional implication in \eqref{eq:homomorphism}. If $a_{ij} = 0$, it follows immediately from the properties of scalar-matrix multiplication that $[\bT^\phi]_{(i,j)} = 0 \cdot \bGamma^\phi = \bO_{N \times N}$. Conversely, suppose $[\bT^\phi]_{(i,j)} = \bO_{N \times N}$. Since $\bGamma^\phi$ is assumed to be invertible, it is necessarily a non-zero matrix, i.e., there exists at least one element $[\bGamma^\phi]_{(m,n)} \neq 0$. The condition $a_{ij} \bGamma^\phi = \bO_{N \times N}$ implies that every entry $a_{ij} [\bGamma^\phi]_{(m,n)} = 0$, which, given the non-zero nature of $\bGamma^\phi$, forces $a_{ij} = 0$. This confirms that the sparsity pattern of the lifted coupling matrix $\bT^\phi$ at the block level is perfectly consistent with the sparsity of the adjacency matrix $\bA$ at the entry level.

Next, we derive the relationship between the block-wise Frobenius norms. For any scalar $\alpha \in \mathbb{R}$ and matrix $\mathbf{B} \in \mathbb{R}^{N \times N}$, the Frobenius norm satisfies the absolute homogeneity property $\|\alpha \mathbf{B}\|_F = \sqrt{\text{Tr}((\alpha \mathbf{B})^\top (\alpha \mathbf{B}))} = \sqrt{\alpha^2 \text{Tr}(\mathbf{B}^\top \mathbf{B})} = |\alpha| \|\mathbf{B}\|_F$. Applying this property to the block $[\bT^\phi]_{(i,j)} = a_{ij} \bGamma^\phi$ directly yields the result in \eqref{eq:block_norm}. 

Finally, to justify the use of block-group regularization for topology inference, we consider the sum of the Frobenius norms of all $N \times N$ blocks in $\bT^\phi$. By factoring out the constant term $\|\bGamma^\phi\|_F$, which is strictly positive due to the invertibility of $\bGamma^\phi$, we have:
\begin{align}
\sum_{i=1}^p \sum_{j=1}^p \normF{[\bT^\phi]_{(i,j)}} &= \sum_{i,j} |a_{ij}| \normF{\bGamma^\phi} = \normF{\bGamma^\phi} \sum_{i,j} |a_{ij}| \nonumber\\
&= \normF{\bGamma^\phi} \normone{\bA}.
\end{align}
This linear scaling relationship demonstrates that minimizing the sum of block Frobenius norms in the lifted Koopman space is mathematically equivalent to imposing an $\ell_1$-norm penalty on the original graph adjacency matrix $\bA$, thereby ensuring that group-sparsity in the operator domain translates directly to edge-sparsity in the physical domain.
\end{proof}

\section{Proof of Theorem \ref{thm:admm_convergence}}
\label{app_thm2}
\begin{proof}
We establish the linear convergence rate \eqref{eq:linear_rate} by analyzing the
ADMM iterates applied to the composite optimization subproblem
\eqref{eq:topology_subprob}.  Throughout, we fix the time index $k$ and suppress
it where no confusion arises.

\textbf{Problem structure and well-posedness.}
% The subproblem \eqref{eq:topology_subprob} takes the form
% \begin{equation}
%   \min_{\bT^\phi,\,\bC_k}\;
%   f(\bT^\phi) + g(\bC_k)
%   \quad\text{subject to}\quad
%   \bT^\phi - \bC_k = \bO,
%   \label{eq:composite}
% \end{equation}
The subproblem \eqref{eq:topology_subprob} takes the form
\begin{align}
  &\min_{\bT^\phi,\,\bC_k}\;
  f(\bT^\phi) + g(\bC_k)\nonumber\\
  &\quad\text{s.t.}\quad
  \bT^\phi - \bC_k = \bO, \quad \bT^\phi(\bone_p\otimes\bI_N)=\bone_p\otimes\bGamma^\phi,
  \label{eq:composite}
\end{align}
where
\[
  f(\bT^\phi)
  = \frac{1}{\tilde{k}_\gamma}\sum_{i=1}^k
    \gamma^{k-i}\phi_i(\bT^\phi),
  \;
  g(\bC_k)
  = \alpha_g\sum_{i,j}\normF{[\bC_k]_{(i,j)}}.
\] 
Note that the additional row-sum identity acts as an affine equality constraint, which restricts $\bT^\phi$ to a closed convex set $\Omega_{\bT^\phi}$ and does not alter the underlying convexity of the objective function.
The function $g$ is a finite sum of Euclidean norms of matrix blocks, hence
it is convex, continuous, and sub-differentiable everywhere.  To analyze $f$,
we write it explicitly in terms of the lifted regressor matrix $\hat{\bPsi}_k$,
which is held fixed throughout the inner ADMM loop.  Each summand
$\phi_i(\bT^\phi) = \normF{\hat{\bPsi}_i - \bT^\phi\hat{\bPsi}_{i-1}}^2$
is a quadratic in the vectorization $\bT^\phi_{\mathrm{vec}}
\triangleq \vecop(\bT^\phi) \in \mathbb{R}^{N^2 p^2}$.  Consequently, the
Hessian of $f$ with respect to $\bT^\phi_{\mathrm{vec}}$ is
\begin{equation}
  \nabla^2 f
  = \bXi_k^f
  \triangleq \frac{2}{\tilde{k}_\gamma}(\bSigma_k \otimes \bI_{Np}),
  \label{eq:hessian_f}
\end{equation}
% where $\bJ_k = \frac{1}{\tilde{k}_\gamma}\sum_{i=1}^k\gamma^{k-i}
% \hat{\bPsi}_{i-1}\hat{\bPsi}_{i-1}^\top \in \mathbb{R}^{Nn\times Nn}$
% is the discounted empirical Gram matrix of the lifted regressors.
% By Assumption~\ref{ass:dictionary}(A3) and the excitation condition (B3),
% $\bJ_k \succ \bO$ with $\lambda_{\min}(\bJ_k) \geq \mu_L > 0$, so
% Lemma~\ref{lem:Jk_pd} gives $\lambda_{\min}(\bXi_k^f) \geq 2\mu_k > 0$
% with $\mu_k \triangleq \lambda_{\min}(\bJ_k)$.  
where $\bSigma_k = \frac{1}{\tilde{k}_\gamma}\sum_{i=1}^k\gamma^{k-i}
\hat{\bPsi}_{i-1}\hat{\bPsi}_{i-1}^\top \in \mathbb{R}^{Np\times Np}$
is the discounted empirical Gram matrix of the lifted regressors.
By the excitation condition (B3), $\bSigma_k \succ \bO$ with $\lambda_{\min}(\bSigma_k) \geq \mu_L > 0$. The Hessian evaluation over the feasible set gives $\lambda_{\min}(\bXi_k^f) \geq 2\mu_k > 0$ with $\mu_k \triangleq \frac{2}{\tilde{k}_\gamma}\lambda_{\min}(\bSigma_k)$. 
Hence $f$ is strongly convex with modulus $\mu_k$, and the Hessian satisfies $\mu_k \bI \preceq \bXi_k^f/2 \preceq \lambda_k^{\max}\bI$ where $\lambda_k^{\max} \triangleq \frac{2}{\tilde{k}_\gamma}\lambda_{\max}(\bSigma_k)$. 
% Hence $f$ is
% strongly convex with modulus $\mu_k$, and the Hessian satisfies
% $\mu_k \bI \preceq \bXi_k^f/2 \preceq \lambda_k^{\max}\bI$
% where $\lambda_k^{\max} \triangleq \lambda_{\max}(\bJ_k)$.
Since $f$ is strongly convex and $g$ is convex with the constraint set closed
and affine, problem \eqref{eq:composite} admits a unique global minimiser
$(\hat{\bT}^\phi_k, \hat{\bC}_k)$, and strong duality holds by Slater's condition.

\textbf{Augmented Lagrangian and ADMM update equations.}
The augmented Lagrangian of \eqref{eq:composite} with penalty parameter
$\beta_k > 0$ and dual variable $\bLambda \in \mathbb{R}^{Np\times Np}$ is
\begin{align}
    \mathcal{L}_{\beta_k}(\bT^\phi, \bC_k, \bLambda)
  &= f(\bT^\phi) + g(\bC_k)
    + \langle \bLambda,\, \bT^\phi - \bC_k \rangle\nonumber\\
    &\quad+ \frac{\beta_k}{2}\normF{\bT^\phi - \bC_k}^2.
\end{align}
  
The ADMM updates at each inner iteration $r$ are
\begin{align}
  \bT^{\phi,(r+1)}_k
  &= \argmin_{\bT^\phi}\mathcal{L}_{\beta_k}
    \!\left(\bT^\phi,\,\bC_k^{(r)},\,\bLambda^{(r)}\right),
  \label{eq:T_step}\\
  \bC_k^{(r+1)}
  &= \argmin_{\bC_k}\mathcal{L}_{\beta_k}
    \!\left(\bT^{\phi,(r+1)}_k,\,\bC_k,\,\bLambda^{(r)}\right),
  \label{eq:C_step}\\
  \bLambda^{(r+1)}
  &= \bLambda^{(r)}
    + \beta_k\!\left(\bT^{\phi,(r+1)}_k - \bC_k^{(r+1)}\right).
  \label{eq:dual_step}
\end{align}
We analyze each update in turn to identify the contraction mechanism.

\textbf{The $\bT^\phi$-update as a linear system.}
Since $f$ is quadratic, the $\bT^\phi$-minimization step \eqref{eq:T_step}
reduces to a linear system.  Setting the gradient of
$\mathcal{L}_{\beta_k}$ with respect to $\bT^\phi_{\mathrm{vec}}$ to zero gives
\[
  \left(\bXi_k^f + \beta_k \bI\right)\bT^{\phi,(r+1)}_{\mathrm{vec}}
  = -\vecop(\bLambda^{(r)}) + \beta_k\vecop(\bC_k^{(r)}).
\]
Since $\bXi_k^f + \beta_k\bI \succ \bO$, this system has the unique solution
\begin{equation}
  \bT^{\phi,(r+1)}_{\mathrm{vec}}
  = \left(\bXi_k^f + \beta_k\bI\right)^{-1}
    \!\left(\beta_k\vecop(\bC_k^{(r)}) - \vecop(\bLambda^{(r)})\right).
  \label{eq:T_closed}
\end{equation}

\textbf{The $\bC_k$-update as a proximal operator.}
The $\bC_k$-minimization step \eqref{eq:C_step} decouples block-wise.
For each index pair $(i,j)$, the $(i,j)$-th block update reads
\begin{align}
    &[\bC_k^{(r+1)}]_{(i,j)}
  = \argmin_{[\bC]_{(i,j)}}\{
      \alpha_g\normF{[\bC]_{(i,j)}}\nonumber\\
      &+ \frac{\beta_k}{2}\normF{[\bT^{\phi,(r+1)}_k]_{(i,j)}
        - [\bC]_{(i,j)} + \beta_k^{-1}[\bLambda^{(r)}]_{(i,j)}}^2\}.
\end{align}
This is exactly the proximal operator of the Frobenius norm scaled by
$\alpha_g/\beta_k$, evaluated at the shifted argument
$\bM_{(i,j)}^{(r)} \triangleq [\bT^{\phi,(r+1)}_k]_{(i,j)}
+ \beta_k^{-1}[\bLambda^{(r)}]_{(i,j)}$.  The closed-form solution is
the block Frobenius soft-thresholding operator:
\begin{equation}
  [\bC_k^{(r+1)}]_{(i,j)}
  = \left(1 - \frac{\alpha_g/\beta_k}
      {\normF{\bM_{(i,j)}^{(r)}}}\right)_{\!+}
    \bM_{(i,j)}^{(r)},
  \label{eq:prox_closed}
\end{equation}
which is non-expansive as a function of $\bM_{(i,j)}^{(r)}$, since the
scalar shrinkage function $t \mapsto (1 - c/t)_+$ for $c > 0$ has Lipschitz
constant 1.  The representation \eqref{eq:prox_closed} is equivalent to
the group-lasso proximal step \eqref{eq:C_update}, confirming its correctness.

\textbf{Lyapunov function and contraction.}
To establish linear convergence, we introduce the primal–dual residual
(the natural Lyapunov function for ADMM with strongly convex $f$):
\[
  V^{(r)}
  \triangleq \normF{\bT^{\phi,(r)}_k - \hat{\bT}^\phi_k}^2
  + \beta_k^{-2}\normF{\bLambda^{(r)} - \hat{\bLambda}_k}^2,
\]
where $\hat{\bLambda}_k$ is the optimal dual variable corresponding to
$\hat{\bT}^\phi_k$.  By the KKT conditions of \eqref{eq:composite}, the
optimal pair $(\hat{\bT}^\phi_k,\hat{\bC}_k,\hat{\bLambda}_k)$ satisfies
\[
  \nabla f(\hat{\bT}^\phi_k) + \hat{\bLambda}_k = \bO,
  \qquad
  \bO \in \partial g(\hat{\bC}_k) - \hat{\bLambda}_k,
  \qquad
  \hat{\bT}^\phi_k = \hat{\bC}_k.
\]
Using these stationarity conditions together with the strong convexity of
$f$ with modulus $\mu_k$ and the non-expansiveness of the proximal operator
of $g$, a standard telescoping argument (see, e.g.,~\cite{hong2017linear},
Theorem~1) yields
\begin{equation}
  V^{(r+1)}
  \leq \left(1 - \frac{2\mu_k\beta_k}{(\lambda_k^{\max})^2
    + 2\mu_k\beta_k}\right) V^{(r)}.
  \label{eq:V_contraction}
\end{equation}
We verify the derivation of the contraction coefficient explicitly.
Subtracting the KKT optimality condition from the $\bT^\phi$-update
\eqref{eq:T_closed}, the primal error $\tilde{\bT}^{(r)} \triangleq
\bT^{\phi,(r)}_k - \hat{\bT}^\phi_k$ satisfies the update
\[
  \tilde{\bT}^{(r+1)}_{\mathrm{vec}}
  = (\bXi_k^f + \beta_k\bI)^{-1}
    \!\left(\beta_k\tilde{\bC}^{(r)}_{\mathrm{vec}}
    - \tilde{\bLambda}^{(r)}_{\mathrm{vec}}\right),
\]
where $\tilde{\bC}^{(r)} \triangleq \bC_k^{(r)} - \hat{\bC}_k$ and
$\tilde{\bLambda}^{(r)} \triangleq \bLambda^{(r)} - \hat{\bLambda}_k$.
Since the proximal operator of $g$ is non-expansive,
$\normF{\tilde{\bC}^{(r+1)}} \leq \normF{\tilde{\bT}^{(r+1)}
+ \beta_k^{-1}\tilde{\bLambda}^{(r)}}$.
Substituting into the dual update \eqref{eq:dual_step} and bounding
$\normF{(\bXi_k^f + \beta_k\bI)^{-1}}$ by $(\lambda_{\min}(\bXi_k^f)
+ \beta_k)^{-1} \leq (2\mu_k + \beta_k)^{-1}$ and $\normF{\bXi_k^f}$
by $2\lambda_k^{\max}$, one obtains after algebraic manipulation that
\begin{align}
   & \normF{\tilde{\bT}^{(r+1)}}^2
  + \beta_k^{-2}\normF{\tilde{\bLambda}^{(r+1)}}^2\nonumber\\
  &\leq \frac{(\lambda_k^{\max})^2}{(\lambda_k^{\max})^2 + 2\mu_k\beta_k}
  \left(\normF{\tilde{\bT}^{(r)}}^2
  + \beta_k^{-2}\normF{\tilde{\bLambda}^{(r)}}^2\right),
\end{align}
which is precisely \eqref{eq:V_contraction} with contraction factor
$\rho_k^2 = 1 - 2\mu_k\beta_k\bigl[(\lambda_k^{\max})^2
+ 2\mu_k\beta_k\bigr]^{-1}$.  Since $\mu_k > 0$ and $\beta_k > 0$, we
have $\rho_k^2 \in (0,1)$, confirming $\rho_k \in (0,1)$.

\textbf{Propagation to the primal iterates and optimal penalty.}
Because $V^{(r)} \geq \normF{\tilde{\bT}^{(r)}}^2$, iterating
\eqref{eq:V_contraction} gives
\begin{align}
    &\normF{\hat{\bT}^{\phi,(r)}_k - \hat{\bT}^\phi_k}^2
  \leq V^{(r)}
  \leq \rho_k^{2r}V^{(0)}\nonumber\\
  &\leq \rho_k^{2r}
    \left(\normF{\hat{\bT}^{\phi,(0)}_k - \hat{\bT}^\phi_k}^2
    + \beta_k^{-2}\normF{\bLambda^{(0)} - \hat{\bLambda}_k}^2\right).
\end{align}
Absorbing the dual initialization error into the constant implicit in
$\normF{\hat{\bT}^{\phi,(0)}_k - \hat{\bT}^\phi_k}$ (or initializing the
dual variable at $\bLambda^{(0)} = \hat{\bLambda}_k$, which does not affect
the rate), one obtains \eqref{eq:linear_rate} directly.  The convergence
factor $\rho_k$ as expressed in \eqref{eq:rho} is minimized over $\beta_k
> 0$ by setting $\partial\rho_k^2/\partial\beta_k = 0$, which yields the
optimal penalty
\[
  \beta_k^{\mathrm{opt}}
  = \frac{\lambda_k^{\max}}{\sqrt{2\mu_k}},
\]
and substituting back into \eqref{eq:rho} confirms that $\rho_k < 1$
strictly.  This completes the proof.
\end{proof}

\section{Proof of Proposition \ref{prop:spectral}}
\label{app_prop1}
\begin{proof}
The argument unfolds by first quantifying the operator-level perturbation induced by the network-level truncation in the lifted space, and then employing the classical Bauer--Fike theorem to map this discrepancy to the spectral domain.
 
Let $F:\mathcal{X}^p\to\mathcal{X}^p$ denote the global dynamics map defined component-wise by $(F(x))_i = f_i(x_i)+\sum_j a_{ij}g(x_j)$, and write the global lifted state vector as $\bPsi(x)=\mathrm{col}\{\bpsi_i(x_i)\}_{i=1}^p$. The true Koopman operator $\mathcal{K}$ acting on the observable Hilbert space $L^2(\mathcal{X}^p)$ is defined via composition $\mathcal{K}\varphi = \varphi\circ F$. Restricting $\mathcal{K}$ to the finite-dimensional dictionary subspace $\mathcal{F}_N=\mathrm{span}\{\bpsi_{i,l}:i=1,\ldots,p,\,l=1,\ldots,N\}$ via the $L^2$-orthogonal projection $\mathcal{P}_N$ yields the ideal projected Koopman matrix $\bK^*\in\mathbb{R}^{Np\times Np}$ satisfying:
\begin{equation}
  \bK^*\bPsi(x) \;=\;\mathcal{P}_N\bigl[\bPsi(F(x))\bigr] \quad\text{for all }x\in\mathcal{X}^p.
  \label{eq:Kstar}
\end{equation}
By construction, the eigenvalues of $\bK^*$ represent the true Koopman eigenvalues $\{\mu_j^{\mathrm{true}}\}$ restricted to the observable subspace $\mathcal{F}_N$.
 
We now bound the deterministic operator discrepancy matrix $\bE \triangleq \bK^* - (\bL+\bT^\phi) \in \mathbb{R}^{Np \times Np}$. For any state $x \in \mathcal{X}^p$, we can decompose $\bE\bPsi(x)$ by adding and subtracting the true advanced observable $\bPsi(F(x))$:
\begin{align}
  \bE\bPsi(x) = &\underbrace{\mathcal{P}_N\bigl[\bPsi(F(x))\bigr] - \bPsi(F(x))}_{\text{Projection Residual}} \nonumber\\
  &+ \underbrace{\bPsi(F(x)) - (\bL+\bT^\phi)\bPsi(x)}_{\text{Lifting Truncation Error}}.
  \label{eq:E_decomp_new}
\end{align}
By the definition of the network-level coupling error in Assumption~\ref{ass:dictionary}(A5), the second term in \eqref{eq:E_decomp_new} is strictly bounded by $\varepsilon_{\text{net}}$ uniformly over the compact domain $\mathcal{X}^p$. For the first term, since the individual dictionaries $\bpsi_i$ exhibit standard $L^2$-approximation properties within $\mathcal{F}_N$, the off-subspace projection residual satisfies $\normtwo{(\mathcal{P}_N - \bI)\bPsi(F(x))} \leq c_0 \varepsilon_{\text{net}}$ where $c_0 > 0$ depends solely on the dictionary span. Combining these bounds via the triangle inequality yields:
\begin{equation}
  \normtwo{\bE\bPsi(x)} \leq (c_0 + 1)\,\varepsilon_{\text{net}} \quad \text{for all } x \in \mathcal{X}^p.
\end{equation}
Let $\mathbf{G} = \int_{\mathcal{X}^p} \bPsi(x)\bPsi(x)^\top d\mu(x) \succ \mathbf{O}$ denote the strictly positive-definite data Gram matrix over the compact state space. By integrating the quadratic form of the error over the state manifold, the spectral norm of the constant matrix $\bE$ is strictly controlled by the geometry of the dictionary profile, yielding the uniform upper bound:
\begin{equation}
  \normtwo{\bE} \;\leq\; c_E\,\varepsilon_{\text{net}},
  \label{eq:Enorm_fixed}
\end{equation}
where $c_E > 0$ is a structural constant proportional to $\lambda_{\min}^{-1/2}(\mathbf{G})$.
 
We now transfer this matrix-level error to eigenvalue sensitivity. Assuming that the estimated matrix $\bL+\bT^\phi$ is diagonalizable (which holds generically under simple eigenvalues for distinct network parameters), we can write its Jordan decomposition as $\bL+\bT^\phi = \bV\bLambda\bV^{-1}$, where $\bLambda = \mathrm{diag}(\mu_1, \dots, \mu_{Np})$. Treating $\bK^* = (\bL+\bT^\phi) + \bE$ as a perturbed matrix and invoking the classical Bauer--Fike theorem~\cite{horn2012matrix}, it follows that for each true eigenvalue $\mu_j^{\mathrm{true}}$ of $\bK^*$, there exists an estimated eigenvalue $\mu_j$ of $\bL+\bT^\phi$ such that:
\begin{equation}
  |\mu_j^{\mathrm{true}} - \mu_j| \leq \kappa_2(\bV)\,\normtwo{\bE} \leq \kappa_2(\bV)\,c_E\,\varepsilon_{\text{net}},
  \label{eq:BF_final}
\end{equation}
which matches the assertion in \eqref{eq:spectral_bound_rep}. 

Furthermore, from non-normal matrix theory, the condition number of the eigenvector matrix satisfies $\kappa_2(\bV) \geq \sigma_{\min}^{-1}(\bL+\bT^\phi) \cdot \max_j |\mu_j|$. This explicitly uncovers the underlying numerical mechanism: when the estimated graphical system matrix $\bL+\bT^\phi$ approaches singularity (i.e., $\sigma_{\min}(\cdot) \to 0$), the spectral condition number $\kappa_2(\bV)$ inflates. This expansion broadens the pseudospectral boundaries, rendering the Koopman eigenvalues highly sensitive to network truncation, perfectly aligned with classical operator perturbation paradigms.
\end{proof}

\section{Proof of Theorem \ref{thm:error_bound}}
\label{app_thm3}
\begin{proof}
We establish the bound \eqref{eq:error_bound} by analyzing the propagation of the
lifted estimation error through the Kalman–ADMM recursion.

\textbf{Lifted error dynamics.}
Define the global lifted state $\bPsi_k \in \mathbb{R}^{Np}$ and its
filtered estimate $\hat{\bPsi}_{k|k}$ produced by the Kalman update at time $k$.
The one-step prediction is $\hat{\bPsi}_{k|k-1} = (\bL + \hat{\bT}^\phi_{k-1})\hat{\bPsi}_{k-1|k-1}$,
where $\hat{\bT}^\phi_{k-1}$ is the ADMM estimate of the lifted topology operator
obtained at time $k-1$.  From the true lifted dynamics
\eqref{eq:global_lifted_state}, the true state satisfies
\[
  \bPsi_k = (\bL + \bT^\phi)\bPsi_{k-1} + \bar{w}_{k-1},
\]
where $\bar{w}_{k-1}$ absorbs the Koopman approximation residual and the lifted
process noise, with $\mathbb{E}[\bar{w}_{k-1}] = 0$ and covariance
$\bar{\bQ}_{k-1}$ characterized in Theorem~\ref{thm:lifting}.  
Writing $\tilde{\bPsi}_{k-1} \triangleq \bPsi_{k-1} - \hat{\bPsi}_{k-1|k-1}$
and $\tilde{\bT}^\phi_{k-1} \triangleq \bT^\phi - \hat{\bT}^\phi_{k-1}$, the
prediction error is
\begin{align}
  &\bPsi_k - \hat{\bPsi}_{k|k-1}\nonumber\\
  &= (\bL+\bT^\phi)\bPsi_{k-1}
     - (\bL+\hat{\bT}^\phi_{k-1})\hat{\bPsi}_{k-1|k-1} + \bar{w}_{k-1}
  \nonumber\\
  &= (\bL+\hat{\bT}^\phi_{k-1})\tilde{\bPsi}_{k-1}
     + \tilde{\bT}^\phi_{k-1}\bPsi_{k-1} + \bar{w}_{k-1}.
  \label{eq:pred_error}
\end{align}
This decomposition separates the error into three physically distinct
contributions: propagation of the prior estimation error through the estimated
operator, coupling through the topology residual $\tilde{\bT}^\phi_{k-1}$, and
the process-plus-lifting noise $\bar{w}_{k-1}$.

\textbf{Covariance recursion.}
Let $\tilde{\bP}_{k|k-1} \triangleq \mathbb{E}[(\bPsi_k - \hat{\bPsi}_{k|k-1})
(\bPsi_k - \hat{\bPsi}_{k|k-1})^\top]$ denote the predicted error covariance,
and $\tilde{\bP}_{k|k} \triangleq \mathbb{E}[\tilde{\bPsi}_k\tilde{\bPsi}_k^\top]$
the filtered error covariance.  Taking the outer product of \eqref{eq:pred_error}
with itself and applying the expectation, cross-terms involving $\bar{w}_{k-1}$
vanish by the independence of the noise from the state estimate and the topology
residual (both $\mathcal{F}_{k-1}$-measurable). Moreover, $\hat{\bT}^\phi_{k-1}$
is $\mathcal{F}_{k-1}$-measurable by the ADMM update at time $k-1$, so
\begin{align}
  \tilde{\bP}_{k|k-1}
  &= (\bL+\hat{\bT}^\phi_{k-1})\tilde{\bP}_{k-1|k-1}
     (\bL+\hat{\bT}^\phi_{k-1})^\top
  \nonumber\\
  &\quad + \mathbb{E}\!\left[
      \tilde{\bT}^\phi_{k-1}\bPsi_{k-1}\bPsi_{k-1}^\top
      (\tilde{\bT}^\phi_{k-1})^\top
    \right]
  + \bar{\bQ}_{k-1}.
  \label{eq:cov_pred}
\end{align}
Since the Kalman gain minimizes $\tr(\tilde{\bP}_{k|k})$ over all linear
estimators, the standard Kalman update gives
$\tr(\tilde{\bP}_{k|k}) \leq \tr(\tilde{\bP}_{k|k-1})$.
Applying this inequality together with the spectral bound from
Proposition~\ref{prop:spectral}, which guarantees
$\|\bL + \hat{\bT}^\phi_{k-1}\|_2 \leq \rho_s + \epsilon_\rho$ for
arbitrarily small $\epsilon_\rho > 0$ when $\hat{\bT}^\phi_{k-1}$ is
sufficiently close to $\bT^\phi$, and using $\rho_s < 1$ (Assumption~(B1)),
we obtain
\begin{align}
  \tr(\tilde{\bP}_{k|k})
  \leq &\rho_s^2\,\tr(\tilde{\bP}_{k-1|k-1}) + \tr(\bar{\bQ}_{k-1})\nonumber\\
  &+ \mathbb{E}\!\left[\tr\!\left(
      \tilde{\bT}^\phi_{k-1}\bPsi_{k-1}\bPsi_{k-1}^\top
      (\tilde{\bT}^\phi_{k-1})^\top
    \right)\right].
  \label{eq:trace_recursion}
\end{align}

\textbf{Bounding the topology coupling term.}
We bound the second term on the right-hand side of \eqref{eq:trace_recursion}.
By the cyclic property of the trace and sub-multiplicativity of the Frobenius norm,
\begin{align}
    &\tr\!\left(\tilde{\bT}^\phi_{k-1}\bPsi_{k-1}\bPsi_{k-1}^\top
    (\tilde{\bT}^\phi_{k-1})^\top\right)
  = \|\tilde{\bT}^\phi_{k-1}\bPsi_{k-1}\|_F^2\nonumber\\
  &\leq \|\tilde{\bT}^\phi_{k-1}\|_F^2\,\|\bPsi_{k-1}\|_2^2.
\end{align}
Since the lifted topology operator $\bT^\phi$ is determined by the adjacency matrix $\bA$ through the lifting map $\phi$, the Frobenius norm of the operator residual satisfies $\|\tilde{\bT}^\phi_{k-1}\|_F \leq c_\phi\|\tilde{\bA}_{k-1}\|_F$ for a constant $c_\phi > 0$. Crucially, $\tilde{\bT}^\phi_{k-1}$ and $\bPsi_{k-1}$ are statistically dependent. To bound the expectation without erroneously assuming independence, we apply the deterministic state bound $\sup_{k} \|\bPsi_{k}\|_2^2 \leq M_\Psi$ established in Assumption~\ref{ass:stability}(B1) \emph{before} taking the expectation. Setting $c_4' = c_\phi^2$, we have the almost-sure bound $\tr(\tilde{\bT}^\phi_{k-1}\bPsi_{k-1}\bPsi_{k-1}^\top(\tilde{\bT}^\phi_{k-1})^\top) \leq c_4' M_\Psi \|\tilde{\bA}_{k-1}\|_F^2$. Taking the expectation over the topology residual yields:
\begin{equation}
  \mathbb{E}\!\left[\tr\!\left(
    \tilde{\bT}^\phi_{k-1}\bPsi_{k-1}\bPsi_{k-1}^\top
    (\tilde{\bT}^\phi_{k-1})^\top
  \right)\right]
  \leq c_4'\,M_\Psi\,\mathbb{E}\bigl[\|\tilde{\bA}_{k-1}\|_F^2\bigr].
\label{eq:topo_bound}
\end{equation}
% Since the lifted topology operator $\bT^\phi$ is determined by the adjacency
% matrix $\bA$ through the lifting map $\phi$, the Frobenius norm of the operator
% residual satisfies $\|\tilde{\bT}^\phi_{k-1}\|_F \leq c_\phi\|\tilde{\bA}_{k-1}\|_F$
% for a constant $c_\phi > 0$ depending only on the dictionary (Assumption~\ref{ass:dictionary}).
% Setting $c_4' = c_\phi^2$ and using the uniform state-energy bound
% $\mathbb{E}[\|\bPsi_{k-1}\|_2^2] \leq M_\Psi < \infty$, which holds under stable
% dynamics (Assumption~(B1)) and bounded inputs, we arrive at
% \begin{equation}
%   \mathbb{E}\!\left[\tr\!\left(
%     \tilde{\bT}^\phi_{k-1}\bPsi_{k-1}\bPsi_{k-1}^\top
%     (\tilde{\bT}^\phi_{k-1})^\top
%   \right)\right]
%   \leq c_4'\,M_\Psi\,\|\tilde{\bA}_{k-1}\|_F^2.
%   \label{eq:topo_bound}
% \end{equation}

\textbf{Bounding the noise covariance term.}
By Theorem~\ref{thm:lifting}, the lifted noise covariance satisfies
$\tr(\bar{\bQ}_{k-1}) \leq p(\sigma_{\mathcal{K}}^2 + L_\nabla^2\sigma_w^2)$,
where $\sigma_{\mathcal{K}}^2$ accounts for the Koopman approximation error.
Specifically, from the residual characterization in Assumption~\ref{ass:dictionary}(A2)--(A3),
the Koopman lifting error contributes a term bounded by
$c_1'(\varepsilon_{\text{net}}^2)$ per component, so
that after aggregating over all $N$ nodes,
\begin{equation}
  \tr(\bar{\bQ}_{k-1})
  \leq c_1'N\!\left(\varepsilon_{\text{net}}^2\right)
  + c_2'\,L_\nabla^2\sigma_w^2 + c_3'\,\sigma_v^2.
  \label{eq:noise_bound}
\end{equation}

\textbf{Geometric series and steady-state bound.}
Substituting \eqref{eq:topo_bound} and \eqref{eq:noise_bound} into
\eqref{eq:trace_recursion} yields the affine recursion
\[
  \tr(\tilde{\bP}_{k|k})
  \leq \rho_s^2\,\tr(\tilde{\bP}_{k-1|k-1}) + \delta,
\]
where $\delta \triangleq c_1'N(\varepsilon_{\text{net}}^2)
+ c_2'L_\nabla^2\sigma_w^2 + c_3'\sigma_v^2 + c_4'M_\Psi\|\tilde{\bA}\|_F^2$
is the per-step increment.  Iterating this recursion from $k_0$ to $k$ and summing
the resulting geometric series gives
\[
  \tr(\tilde{\bP}_{k|k})
  \leq \rho_s^{2(k-k_0)}\tr(\tilde{\bP}_{k_0|k_0})
  + \frac{\delta}{1-\rho_s^2}.
\]
Since $\rho_s < 1$, the transient term $\rho_s^{2(k-k_0)}\tr(\tilde{\bP}_{k_0|k_0})$
decays geometrically to zero.  For a universal bound valid for all $k \geq k_0$,
we absorb the transient into the constants (replacing $c_1'N$ by $c_1$, $c_2'$ by
$c_2$, $c_3'$ by $c_3$, and $c_4'M_\Psi$ by $c_4$, all of which are positive
system-dependent constants), yielding
\begin{align}
  &\mathbb{E}\!\left[\|\bpsi_{i,k} - \hat{\bpsi}_{i,k}\|^2\right]
  \leq \tr(\tilde{\bP}_{k|k})\nonumber\\
  &\leq \frac{c_1(\varepsilon_{\text{net}}^2)}{1-\rho_s^2}
  + \frac{c_2 L_\nabla^2\sigma_w^2 + c_3\sigma_v^2}{1-\rho_s^2}
  + \frac{c_4\|\tilde{\bA}_{k-1}\|_F^2}{\mu_L^2},
  \label{eq:lifted_bound_full}
\end{align}
where the final topology residual term incorporates the lower excitation bound
$\mu_L > 0$ from Assumption~(B3), which ensures that the regressor energy provides
sufficient conditioning for the ADMM topology estimates: specifically, under
Assumption~(B3) the ADMM iterates satisfy $\|\tilde{\bA}_{k-1}\|_F^2 \leq
c_4''M_\Psi/\mu_L^2 \cdot \|\tilde{\bA}_{k-1}\|_F^2$ upon normalizing by
$\lambda_{\min}(\bL_k)$, so the constant $c_4 > 0$ absorbs both $c_4''$ and
$M_\Psi/\mu_L^2$, and we write the residual as $c_4\mu_L^{-2}\|\tilde{\bA}_{k-1}\|_F^2$
to make the dependence on the excitation level explicit.
This establishes \eqref{eq:error_bound}.

\textbf{Back-projection.}
Finally, Assumption~\ref{ass:dictionary}(A4) posits the existence of a projection
operator $\bPi$ such that $x_{i,k} = \bPi\bpsi_{i,k}$ exactly.  Consequently,
\[
  x_{i,k} - \hat{x}_{i,k}
  = \bPi\bpsi_{i,k} - \bPi\hat{\bpsi}_{i,k}
  = \bPi(\bpsi_{i,k} - \hat{\bpsi}_{i,k}),
\]
and therefore
\begin{align}
    &\mathbb{E}\!\left[\|x_{i,k} - \hat{x}_{i,k}\|^2\right]
  = \mathbb{E}\!\left[\|\bPi(\bpsi_{i,k} - \hat{\bpsi}_{i,k})\|^2\right]\nonumber\\
  &\leq \|\bPi\|_2^2\,\mathbb{E}\!\left[\|\bpsi_{i,k} - \hat{\bpsi}_{i,k}\|^2\right]
  \leq \|\bPi\|_2^2\left(\mathcal{E}_{\mathcal{K}}
    + \mathcal{E}_{\mathcal{N}} + \mathcal{E}_{\mathcal{T}}\right),
\end{align}
where the last inequality applies \eqref{eq:lifted_bound_full} with the three
error terms identified in \eqref{eq:error_bound}.  This completes the proof
of \eqref{eq:backproj_error}.
\end{proof}

\section{Proof of Corollary \ref{cor:consistency}}
\label{app_cor1}
\begin{proof}
We establish the result in two parts: first the $N$-limit and
monotonicity, then the $k$-limit for the topology residual.
 
For any fixed $k$, the back-projected estimation error bound from
Theorem~\ref{thm:error_bound} reads
\begin{equation}
  \expect\bigl[\|x_{i,k}-\hat{x}_{i,k}\|^2\bigr]
  \leq \|\bPi\|_2^2
    \Bigl(
      \mathcal{E}_\mathcal{K}^{(N)}
      + \mathcal{E}_\mathcal{N}
      + \mathcal{E}_\mathcal{T}
    \Bigr),
  \label{eq:cor_base}
\end{equation}
where the three error terms are as defined in equation~(43) of
the main text, and the superscript $(N)$ on the Koopman error
$\mathcal{E}_\mathcal{K}^{(N)}$ emphasizes its dependence on the
dictionary dimension $N$.  Explicitly, $
  \mathcal{E}_\mathcal{K}^{(N)}
  = \frac{c_1}{1-\rho_s^2}
    \Bigl(\varepsilon_{net}^2
    \Bigr)$, while $\mathcal{E}_\mathcal{N}=(c_2 L_\nabla^2\sigma_w^2
+c_3\sigma_v^2)/(1-\rho_s^2)$ and
$\mathcal{E}_\mathcal{T}=c_4\mu_L^{-2}\normF{\tilde{\bA}_{k-1}}^2$
are independent of $N$.
 
Consider the nested dictionary sequence: if $\bpsi_i^{(N)}$ is
obtained from $\bpsi_i^{(N')}$ by taking $N'\geq N$ elements of
an orthonormal basis for $L^2(\mathcal{X})$, then the subspace
$\mathcal{F}_N\subseteq\mathcal{F}_{N'}$.  For the self-dynamics
approximation error, the best-approximation theorem in Hilbert space
gives
\[
  \varepsilon_f^{(N')}
  = \inf_{\bF\in\mathbb{R}^{N'\times N'}}
    \sup_{x\in\mathcal{X}}\|\bpsi_i^{(N')}(f_i(x))
    -\bF\bpsi_i^{(N')}(x)\|_2
  \leq \varepsilon_f^{(N)},
\]
because any linear predictor operating on the smaller subspace
$\mathcal{F}_N\subseteq\mathcal{F}_{N'}$ is feasible in the
$N'$-dimensional problem; enlarging the approximation space can only
reduce the minimum achievable residual.  The same argument applies
to $\varepsilon_g^{(N)}$ for the coupling function $g$.  Therefore,
both sequences $\{\varepsilon_f^{(N)}\}_{N\geq 1}$ and
$\{\varepsilon_g^{(N)}\}_{N\geq 1}$ are monotonically non-increasing in
$N$, and consequently $\{\mathcal{E}_\mathcal{K}^{(N)}\}$ is also
monotonically non-increasing.
 
Under the density hypothesis, that the dictionary family is dense in
$L^2(\mathcal{X})$, the closure of $\bigcup_{N=1}^\infty\mathcal{F}_N$
equals $L^2(\mathcal{X})$.  For any $\epsilon>0$, there exists $N_0$
such that for all $N\geq N_0$ the functions $f_i$ and $g$, viewed as
elements of $L^2(\mathcal{X})$, can be approximated to within $\epsilon$
in the $L^\infty(\mathcal{X})$-sense (which implies an
$L^2$-approximation error of at most $\epsilon|\mathcal{X}|^{1/2}$ on
compact $\mathcal{X}$) by elements of $\mathcal{F}_N$.
Translating this into the pointwise representability conditions
Assumption~\ref{ass:dictionary}(A1)--(A2), we obtain $\varepsilon_f^{(N)}\to 0$ and
$\varepsilon_g^{(N)}\to 0$ as $N\to\infty$.  Hence
$\mathcal{E}_\mathcal{K}^{(N)}\to 0$ monotonically.
 
Passing to the limit $N\to\infty$ in~\eqref{eq:cor_base}:
\begin{align}
  &\lim_{N\to\infty}
  \expect\bigl[\|x_{i,k}-\hat{x}_{i,k}\|^2\bigr]\nonumber\\
  &\leq \|\bPi\|_2^2
    \Bigl(0+\mathcal{E}_\mathcal{N}+\mathcal{E}_\mathcal{T}\Bigr)
  \nonumber\\
  &= \|\bPi\|_2^2\cdot
    \frac{c_2 L_\nabla^2\sigma_w^2+c_3\sigma_v^2}{1-\rho_s^2}
    + \mathcal{O}\bigl(\normF{\tilde{\bA}_k}^2\bigr),
\end{align}
and the right-hand side is non-increasing in $N$ because
$\mathcal{E}_\mathcal{K}^{(N)}$ was shown to be non-increasing and
the remaining terms $\mathcal{E}_\mathcal{N}$ and
$\mathcal{E}_\mathcal{T}$ are independent of $N$.
This establishes the first assertion.
 
It remains to analyze the behavior of the topology residual
$\normF{\tilde{\bA}_k}^2$ as $k\to\infty$.
At each time step, the ADMM inner loop minimizes the forgetting-factor
weighted topology subproblem~\eqref{eq:topology_subprob} of the main text to its unique global
minimizer $\hat{\bT}_k^\phi$ (by Theorem~\ref{thm:admm_convergence}).
The resulting topology estimate $\hat{\bA}_k$ is recovered from
$\hat{\bT}_k^\phi$ via the post-processing
step described in the main text.  The optimality condition for the
group-sparse ADMM at convergence yields a bias–variance decomposition:
the gap between $\hat{\bA}_k$ and the true $\bA$ is driven by two
sources of inexactness, the Koopman approximation error
$\mathcal{E}_\mathcal{K}$, which enters through the lifted
residual term in $\phi_i(\bT^\phi)$, and the noise error
$\mathcal{E}_\mathcal{N}$, which enters through the measurement
innovations $z_i - \bH^\phi\hat{\bpsi}_i$.
 
Under Assumption~(B3), the time-averaged regressor Gram matrix satisfies
$\lambda_{\min}(\bL_k)\geq\mu_L>0$ for all $k\geq k_0$.
This persistent-excitation condition ensures that the Hessian of the
quadratic part of the topology objective is uniformly bounded below,
so that the signal-to-noise ratio in the topology inference grows
without bound as the observation window widens.  Specifically,
consider the first-order optimality condition for the unconstrained
part of the topology subproblem: at the ADMM minimizer,
the gradient of the smooth loss evaluated at $\hat{\bA}_k$ equals
(in magnitude) the subgradient of the group-sparse regularizer.
Rearranging and taking norms on both sides, and using
$\lambda_{\min}(\bL_k)\geq\mu_L$ to lower-bound the curvature,
one obtains the estimate
\begin{equation}
  \normF{\hat{\bA}_k - \bA}^2
  \;\leq\;
  \frac{C}{\mu_L^2}
  \Bigl(
    \mathcal{E}_\mathcal{K}^{(N)}
    + \mathcal{E}_\mathcal{N}
    + \mathcal{O}\bigl(k^{-1}\bigr)
  \Bigr),
  \label{eq:A_residual}
\end{equation}
for a constant $C>0$ that depends on the network parameters but not on
$k$ or $N$.  The $O(k^{-1})$ term reflects the diminishing effect
of the initial transient and the finite-window approximation of the
persistent-excitation condition.  As $k\to\infty$, this transient
vanishes, so $\normF{\tilde{\bA}_k}^2\to \mathcal{O}(\mathcal{E}_\mathcal{K}^{(N)}
+\mathcal{E}_\mathcal{N})$ uniformly in $N$.  In the further limit
$N\to\infty$, $\mathcal{E}_\mathcal{K}^{(N)}\to 0$, reducing the
topology residual to
$\normF{\tilde{\bA}_k}^2\to \mathcal{O}(\mathcal{E}_\mathcal{N})$, the irreducible
noise-driven estimation floor, as claimed.
\end{proof}

\section{Proof of Corollary \ref{cor:sparsity}}
\label{app_cor2}

\begin{proof}
We establish the result by casting the topology inference as a
group-lasso estimation problem, verifying the structural conditions
that govern its sign consistency, and then evaluating the double limit
via a quantitative probability bound.
 
To set up the group-lasso formulation, recall that the topology
subproblem~\eqref{eq:topology_subprob} of the main paper, after fixing the lifted states
$\{\hat{\bpsi}_i\}$, reduces, by Lemma~\ref{lem:homomorphism} (Structural Homomorphism), to
a problem of the form
\begin{equation}
  \hat{\bA}_k
  = \argmin_{\bA}
    \;\ell_k(\bA)
    + \alpha_g \sum_{i=1}^p\sum_{j=1}^p |a_{ij}|
  \label{eq:glasso}
\end{equation}
where $\ell_k(\bA)=\tilde{k}_\gamma^{-1}
\sum_{i=k'}^{k}\gamma^{k-i}\phi_i(\bA\otimes\bGamma^\phi)$
is a quadratic form in the entries of $\bA$.
More precisely, by vectorizing the rows of $\bA$ and using the
block-scalar representation $[\bT^\phi]_{(i,j)}=a_{ij}\bGamma^\phi$
from Lemma~\ref{lem:homomorphism}, the loss $\ell_k$ is equivalent (up to a positive
scaling) to a standard group-lasso loss in the variables
$\{a_{ij}\}$, with the effective design matrix
$\bX_k\in\mathbb{R}^{\tilde{k}\times p^2}$ whose $s$-th row contains
the Kronecker-structured regressors evaluated at time $s$, and with
response vector encoding the observed state increments.
 
The sign consistency of the group-lasso estimator is governed by two
conditions: a Restricted-Eigenvalue (or irrepresentability) Condition (REC)
on the design matrix, and a proper scaling of $\alpha_g$ relative to
the noise level and sample size.  We verify both in turn.
 
For the restricted-eigenvalue condition, observe that the
Gram matrix of the effective design $\bX_k$ satisfies
$\bX_k^\top\bX_k/\tilde{k}\to\bL_k/\normF{\bGamma^\phi}^2$
as the window widens, where $\bL_k$ is the discounted regressor
Gram matrix defined in equation~\eqref{eq:Lk} of the main text.
Assumption~(B3) guarantees $\lambda_{\min}(\bL_k)\geq\mu_L>0$ for
all $k\geq k_0$, so the effective design Gram matrix is uniformly
lower-bounded by $\mu_L/\normF{\bGamma^\phi}^2>0$.
This uniform positive-definiteness implies, in particular, that
for the submatrix of $\bX_k$ indexed by the support
$S=\{(i,j):a_{ij}\neq 0\}$ of $\bA$, the restricted eigenvalue
condition holds with constant $\kappa_{\mathrm{RE}}=\mu_L/(2\normF{\bGamma^\phi}^2)>0$. While positive definiteness is necessary, it is not sufficient for exact support recovery. We thus explicitly rely on the Irrepresentability Condition formalized in Assumption~\ref{ass:stability}(B3) (see e.g., Yuan and Lin~\cite{yuan2006model}, Proposition~1). This strictly ensures that the empirical correlations between the active support columns of $\bX_k$ and the non-support columns are uniformly bounded below $1$, avoiding the confounding of true and absent physical edges.
% Under this condition, the standard irrepresentability condition
% for the group lasso (see e.g.\ Yuan and Lin~\cite{yuan2006model},
% Condition~1) is satisfied generically for the graph topology
% $\bA$---specifically, for all topologies $\bA$ such that the
% support columns of $\bX_k$ are not too highly correlated with
% the complement columns, which holds for all but a measure-zero
% set of network topologies under the continuous distribution of
% system parameters.
 
For the scaling of $\alpha_g$, the proposed choice
$\alpha_g=\mathcal{O}\!\bigl(\sqrt{(\mathcal{E}_\mathcal{K}+\mathcal{E}_\mathcal{N})/\tilde{k}}\bigr)$
is calibrated so that $\alpha_g$ simultaneously dominates the
per-entry noise amplitude in the topology residuals (ensuring sparsity
induction) and decays to zero as $N\to\infty$ forces
$\mathcal{E}_\mathcal{K}\to 0$, so that the regularizer does not
penalize the true edges.  In the inner limit $N\to\infty$ with $k$ fixed,
$\mathcal{E}_\mathcal{K}\to 0$ and
$\alpha_g\to \mathcal{O}\!\bigl(\sqrt{\mathcal{E}_\mathcal{N}/\tilde{k}}\bigr)$;
in the outer limit $k\to\infty$, $\tilde{k}\to\infty$ at a rate depending
on the forgetting factor $\gamma$, so $\alpha_g\to 0$, while the
product
\begin{align}
    \tilde{k}\,\alpha_g^2
  &=\mathcal{O}\bigl(\mathcal{E}_\mathcal{K}+\mathcal{E}_\mathcal{N}\bigr)
  \geq\mathcal{O}(\mathcal{E}_\mathcal{N})\nonumber\\
  &=\mathcal{O}\!\left(
    \frac{c_2 L_\nabla^2\sigma_w^2+c_3\sigma_v^2}{1-\rho_s^2}
  \right)
  >0
\end{align}
remains bounded away from zero by the irreducible noise floor.
Crucially, for $\gamma<1$ (exponential forgetting), the effective window
$\tilde{k}_\gamma=(1-\gamma^k)/(1-\gamma)\to 1/(1-\gamma)<\infty$ as
$k\to\infty$, and the total number of processed samples grows as
$k$; accordingly the signal-to-noise ratio in the topology residuals
grows as $k/\tilde{k}_\gamma\to k(1-\gamma)\to\infty$.
This guarantees that $k\alpha_g^2\to\infty$ in the outer limit,
as required by group-lasso consistency theory.
 
With the irrepresentability condition and the scaling conditions
established, we apply the sign-consistency theorem for the group
lasso~\cite{yuan2006model}.  For the specific objective~\eqref{eq:glasso}
with $k\tilde{k}_\gamma^{-1}$ effective samples, under the
irrepresentability condition with constant $\eta_{\mathrm{IR}}>0$,
there exists $c_5>0$ (depending on $\mu_L$, $\normF{\bGamma^\phi}$,
$n$, and the minimum nonzero entry of $\bA$) such that
\begin{equation}
  \mathbb{P}\bigl(
    \mathrm{supp}(\hat{\bA}_k)\neq\mathrm{supp}(\bA)
  \bigr)
  \;\leq\;
  2p^2\exp\!\Bigl(-c_5 k\alpha_g^2\Bigr).
  \label{eq:prob_bound}
\end{equation}
The prefactor $2p^2$ counts the number of potential support-recovery
errors (false inclusions and false exclusions) across the $p^2$
pairs $(i,j)$.
 
Evaluating the double limit: taking $N\to\infty$ first while $k$ is
held fixed, $\mathcal{E}_\mathcal{K}^{(N)}\to 0$ so that the effective
noise in the topology subproblem is driven solely by
$\mathcal{E}_\mathcal{N}$; the exponent in~\eqref{eq:prob_bound}
becomes $c_5 k\alpha_g^2\geq c_5' k\mathcal{E}_\mathcal{N}/\tilde{k}$.
Then taking $k\to\infty$ while maintaining the prescribed
$\alpha_g$-scaling, we have established that $k\alpha_g^2\to\infty$,
so the exponent $c_5 k\alpha_g^2\to\infty$ and hence the right-hand side
of~\eqref{eq:prob_bound} converges to zero.  Equivalently,
\[
  \lim_{k\to\infty}\lim_{N\to\infty}
  \mathbb{P}\bigl(
    \mathrm{supp}(\hat{\bA}_k)\neq\mathrm{supp}(\bA)
  \bigr) = 0,
\]
which is equivalent to the claimed statement that
$\mathbb{P}(\mathrm{supp}(\hat{\bA}_k)=\mathrm{supp}(\bA))\to 1$.
\end{proof}

\bibliographystyle{unsrt}
\bibliography{ref}

\end{document}